\newcommand{\be}[1]{\begin{equation} #1 \end{equation}}
\newcommand{\bes}[1]{\begin{equation*} #1 \end{equation*}}
\newcommand{\eas}[1]{\begin{align*} #1 \end{align*}}
\def\R#1{(\ref{#1})}
\def\D{\,\mathrm{d}}
\def\I{\mathrm{i}}
\def\E{\mathrm{e}}
\def\imath{\mathrm{i}}
\newcommand{\abs}[1]{\left|#1\right|}
\newcommand*{\cL}{\mathcal{L}}
\newcommand*{\cR}{\mathcal{R}}
\newcommand*{\cW}{\mathcal{W}}
\newcommand*{\bbN}{\mathbb{N}}
\newcommand*{\bbR}{\mathbb{R}}
\newcommand*{\bbZ}{\mathbb{Z}}
\newcommand*{\nm}[1]{{\left\|#1\right \|}}
\numberwithin{equation}{section}
\newtheorem{theorem}{Theorem}
\newtheorem{remark}[theorem]{Remark}
\newtheorem{example}[theorem]{Example}
\newtheorem{lemma}[theorem]{Lemma}
\newtheorem{corollary}[theorem]{Corollary}
\newtheorem{proposition}[theorem]{Proposition}
\newtheorem{definition}[theorem]{Definition}
\newtheorem{problem}[theorem]{Problem}
\numberwithin{theorem}{section}
\newcommand{\sett}[1]{\ensuremath{\left \{ #1 \right \}}}
\newcommand{\norm}[1]{\ensuremath{\| #1 \| }}
\newcommand{\normo}[1]{\ensuremath{| #1 | }}
\newcommand{\Norm}[1]{\ensuremath{\big\lVert #1 \big\rVert }}
\newcommand{\mes}[1]{\textnormal{meas}(#1)}
\newcommand{\supp}{\text{supp}}
\newcommand{\func}{\psi}
\newcommand{\Rst}{\mathbb{R}}
\newcommand{\hRst}{\hat{\mathbb{R}}}
\newcommand{\hRdst}{\hat{\mathbb{R}}^d}
\newcommand{\Zst}{\mathbb{Z}}
\DeclareMathOperator*{\esssup}{ess\,sup}
\newcommand{\sinc}{\textnormal{sinc}}
\newcommand{\Rdst}{\bbR^d}
\newcommand{\head}[1]{\medskip \noindent $\bullet$ \emph{#1}}
\date{}
\title{Computing reconstructions from nonuniform Fourier samples: Universality of stability barriers and stable sampling rates}
\author{Ben Adcock\footnote{Department of Mathematics, Simon Fraser University, 8888 University Drive, Burnaby, BC V5A 
1S6, Canada; ben\_adcock@sfu.ca} \and Milana Gataric\footnote{
DPMMS, Faculty of Mathematics, University of Cambridge, Wilberforce Road, Cambridge, CB3 0WB, UK; 
m.gataric@maths.cam.ac.uk} \and Jos\'e Luis Romero 
\footnote{
Acoustics Research Institute, Austrian Academy of Sciences, Wohllebengasse 12-14, Vienna,
1040, Austria; \mbox{jlromero@kfs.oeaw.ac.at}}
}
\begin{document}

\maketitle

\paragraph{Keywords:} Nonuniform sampling, Generalized sampling, Stable recovery, Fourier frame bounds, Voronoi weights.

\begin{abstract}
We study the problem of recovering an unknown compactly-supported multivariate function from samples of its Fourier transform that are acquired nonuniformly, i.e.~not necessarily on a uniform Cartesian grid. Reconstruction problems of this kind arise in various imaging applications, where Fourier samples are taken along radial lines or spirals for example. 

Specifically, we consider finite-dimensional reconstructions, where a limited number of samples is available, and 
investigate the rate of convergence of such approximate solutions and their numerical stability. We show that the 
proportion of Fourier samples that allow for stable approximations of a given numerical accuracy 
is independent of the specific sampling geometry and is therefore universal for different sampling scenarios. This 
allows us to relate both sufficient and necessary conditions for different sampling setups and to exploit several 
results that were previously available only for very specific sampling geometries.

The results are obtained by developing: (i) a transference argument for different measures of the concentration of the Fourier transform and Fourier samples; (ii) frame  bounds valid up to the critical sampling density, which depend explicitly on the sampling set and the spectrum.

As an application, we identify sufficient and necessary conditions for stable and accurate reconstruction of algebraic polynomials or wavelet coefficients from nonuniform Fourier data.
\end{abstract}

\section{Introduction} 

\subsection{Non-uniform Fourier sampling}
Let $D \subseteq \Rdst$ be a compact domain and let $\hat{\bbR}^d$ denote the frequency domain. Consider the problem of reconstructing a function $f \in \cL^2(D)$ 
from samples of its Fourier transform 
\begin{equation}
\label{eq_def_ft}
\hat{f}(\xi) = \int_{\bbR^d} f(x) \E^{-\I 2\pi \xi \cdot x } \D x,\quad \xi \in \hat{\bbR}^d,
\end{equation}
taken on a countable subset $\Omega \subseteq \hRdst$, not necessarily a subset of an equidistant grid.   The abstract mathematical problem consists in establishing a \emph{frame 
inequality}
\begin{align}
\label{eq_frame}
A \norm{f}_2^2 \leq \sum_{\omega \in \Omega} \abs{\hat{f}(\omega)}^2 \leq B \norm{f}_2^2,\qquad f \in \cL^2(D),
\end{align}
with positive constants $A$ and $B$, which are called frame bounds. This problem is well-studied since it is equivalent to the one of 
reconstructing the bandlimited function $\hat{f}$ from its point samples $\sett{\hat{f}(\omega):\omega\in\Omega}$. The 
fundamental results of Duffin and Schaeffer, Kahane, Beurling and Landau relate the validity of the frame 
inequality \eqref{eq_frame} to the density of the set $\Omega$. In higher dimensions, the most effective criterion is 
due to 
Beurling \cite{BeurlingDiffOp, BeurlingVol2}. If $D$ is a centered symmetric convex body and $D^\circ$ is its polar set 
(see Section \ref{s:notation} for precise definitions), 
then $\Omega$ 
satisfies the sampling inequality \eqref{eq_frame} for some constants $A,B>0$
if its \emph{gap} with respect to $D^\circ$
\begin{align*}
\delta_{D^\circ}(\Omega) := \inf\left\{\delta>0: \Omega + \delta D^\circ = \hat{\bbR}^d \right\}
\end{align*}
is $<1/4$ and if $\Omega$ is relatively separated, i.e.~the number of points per unit volume is bounded. The value 
$1/4$ is critical in the sense that there are sets
$\Omega$ with $\delta_{D^\circ}(\Omega)=1/4$ that do not satisfy \eqref{eq_frame}. In one dimension and uniform (equidistant) sampling, condition $\delta_{D^{\circ}}(\Omega)=1/4$  coincides with the Nyquist sampling rate, which leads to Parseval's identity in \eqref{eq_frame}.

While Beurling's gap condition is very general and covers several situations of interest,
the computational aspects of the reconstruction problem, that are most relevant to applications, present a number of further challenges.

\begin{itemize}
\item[(i)] \emph{The frame bounds}. Classical sampling literature is concerned with the existence of 
frame bounds for \eqref{eq_frame}. However, computational problems  require 
explicit information on them, or at least a quantitative description on how they depend on the geometry of $D$ and 
$\Omega$. Typically, this information is practical in convergence and stability analysis of a given
reconstruction algorithm.

\item[(ii)] \emph{The geometry of $\Omega$}. Several applications demand very irregular sampling sets, with some 
groups of points being very close together and other far apart. This is the case in spiral sampling, for example, which 
is often used for fast acquisition of data in Magnetic Resonance Imaging, or in radial sampling, which is used 
whenever Radon data is acquired. Such clustering of sampling points is known to lead to a larger ratio $B/A$ of the 
frame bounds form \eqref{eq_frame}, which indicates worse stability of a reconstruction algorithm.

\item[(iii)] \emph{The approximation error and stability}. In order to compute the reconstruction $f|_{\Omega} \mapsto f$, we need to 
use a finite dimensional approximation and a finite set of sampling points. Computation of a stable and 
accurate approximation from finite data is of utmost importance for practical applications. 
The question that arises here is then, in which sense the finite 
dimensional setup reflects the full continuous problem. This problem is delicate and naive discretizations can 
lead to very poor reconstruction schemes (e.g. Gibbs phenomenon).  

\end{itemize}

With respect to (i), a common practice in the sampling literature is to establish sampling inequalities by means of 
oscillation estimates. This approach consists in constructing an explicit approximation of the unknown function 
$f$ using the samples $\left\{\hat{f}(\omega):\omega\in\Omega\right\}$ and in estimating the corresponding error using 
the controlled modulus of 
continuity of $\hat{f}$ \cite{GrochenigIrregular, GrochenigTrigonometric, AldroubiAverageSamp,
su07,AGH2DNUGS}. Such techniques yield explicit estimates of frame bounds, but higher-dimensional estimates of 
lower frame 
bounds are obtained only subject to density requirements that are considerably worse than the critical rate $1/4$. In 
this article, we provide explicit estimates of the lower frame bound subject to the sharp density condition 
$\delta_{D^\circ}(\Omega)<1/4$.

Challenge (ii) is normally addressed with the introduction of weights \cite{GrochenigIrregular, GrochenigTrigonometric,
Gabardo,AGH2DNUGS,1DNUGS,Rasche99,spyralScienceDirect, AldroubiGrochenigSIREV}. The most common choice are the 
\emph{Voronoi 
weights}
$\sett{\mu_\omega: \omega \in \Omega}$ which are the measures of the Voronoi regions associated with $\Omega$
(see Section \ref{s:notation}). Such a 
choice leads to upper frame bounds 
\begin{align}
\label{eq_W}
&W(\Omega) = \sup_{\substack{f\in \cL^2(\Rdst) \\ {\|f\|_2}=1}}  \sum_{\omega\in \Omega} \mu_{\omega}| 
\hat f (\omega) |^2
\end{align}
that are robust in the sense that adding more points does not increase the bound (see Section \ref{sec_bessel}). The 
analysis of the lower frame bound in the weighted setting is more challenging and part of this article is dedicated to 
that problem.

For the finite dimensional approximation problem (iii), one considers a reconstruction subspace $\cR \subseteq \cL^2(D)$ of dimension 
$N$ and a truncated set of points $\Omega_K := \Omega \cap B_K$, where $B_K$ is the Euclidean ball of radius $K$. Following \cite{1DNUGS}, we  let $f_N$ be the solution of the weighted finite-dimensional problem
\begin{align}\label{NUGS}
f_N = \underset{g \in \cR}{\operatorname{argmin}} \sum_{w \in \Omega_K} 
\mu_\omega \abs{\hat{f}(\omega)-\hat{g}(\omega)}^2,
\end{align}
and investigate the rate of convergence $\norm{f-f_N}_2$ for a particular sequence of subspaces 
$\cR=\cR_N$ (accuracy), and the $\cL^2$-norm of the map $f \mapsto f_N$ (stability). While $K$ represents a \emph{budget 
constraint} 
-- which portion of the infinite set $\Omega$ is involved in the actual sampling process -- $N$ represents an intended
\emph{resolution level} -- how accurate an approximation of $f$ we expect to recover from only 
a limited number of samples.   In particular, as discussed next, the relationship between $N$ and $K$ is 
critical for stability of numerical reconstruction. 

\subsubsection*{Stable sampling rates and stability barriers}

In \cite{1DNUGS} an abstract theory of finite-dimensional approximation of continuous sampling 
problems was introduced and the map $f \mapsto F_{\Omega_K,\cR}(f) := f_N$, where $f_N$ is defined as \R{NUGS}, was 
coined
\emph{non-uniform generalized sampling} (NUGS). The key quantity is the following \emph{concentration measure}
\begin{align}
\label{eq_V}
&V(\cR,\Omega,K) = \inf_{\substack{f\in\cR \\ \|f\|_2=1}}  \sum_{\omega\in \Omega_K} \mu_{\omega} | \hat f (\omega)|^2,
\end{align}
associated with $\Omega$. Stable reconstruction is possible if 
\be{\label{the_condition}
V(\cR,\Omega,K) >0
}
and in this case the convergence rate $\norm{f-f_N}_2$ is 
comparable to the best approximation rate obtainable in $\cR$. Indeed, we have the estimate \cite{1DNUGS}:
\begin{align}
\label{optim_const}
\|f- F_{\Omega_K,\cR}(f+g)\|_2 \leq \sqrt{\frac{W(\Omega_K)}{V(\cR,\Omega,K)}}
\left( \|f-Q_{\cR}(f)\|_2 +\|g\|_2 \right), \qquad f,g\in\cL^2(D),
\end{align}
where $Q_{\cR}$ denotes the orthogonal projection onto $\cR$,
and $W$ is given by \eqref{eq_W}. Therefore, for stable and accurate recovery from 
nonuniform Fourier samples it is \textit{sufficient} to provide conditions that ensure the ratio $\sqrt{W/V}$ 
is finite and small. As formalized in \cite{BAACHOptimality}, the condition \R{the_condition} is also \textit{necessary} for 
stable recovery by NUGS, or in fact, by any so-called contractive method.

The proportion of $N$ and $K$ sufficient for \R{the_condition} to hold
-- called the \emph{stable 
sampling rate} \cite{BAACHShannon} -- 
represents a trade-off between accuracy and stability and can be non-trivial even in simple cases. For example, when 
$D=[-1/2,1/2]$, $\Omega=\Zst$ and $\cR$ is the space of algebraic polynomials of degree $\leq N$, it 
was shown in 
\cite{grhr10} 
that $K \approx N^2$ gives a setup with stable sampling bounds
(see also \cite{BAACHAccRecov}, and \cite{AGH_PiecewiseSmooth2015} for nonuniform settings). Moreover, the proportion $K \gtrsim N^2$ was 
shown to be necessary for stability \cite{AdcockHansenShadrinStabilityFourier}. This latter phenomenon was coined a 
\emph{stability barrier}.

A number of recent articles establish stable sampling rates as well as stability barriers in several different 
contexts. For example, if $D=[-1/2,1/2]$, $\Omega=\Zst$ and $\cR$ is the $N$-dimensional space generated by compactly 
supported wavelets up to a certain scale, 
then  \cite{AHPWavelet} shows that $K \approx N$ suffices for stable recovery, while $K<N$ leads to exponential 
instability. The sufficiency part of these results was extended to two-dimensional wavelets in \cite{AHKM2DWavelets} and to two-dimensional 
shearlets in \cite{MaShearletsGS}. In addition, these results were extended to the context of nonuniform sampling in 
\cite{1DNUGS}, but only in the one-dimensional case.

In most cases, the derivation of stable sampling rates involve studying the following quantity
\begin{align}
\label{eq_Vstar}
&V_*(\cR,\Omega,K) = \sup_{\substack{f\in\cR \\ \|f\|_2=1}}
\sum_{\omega \in \Omega \setminus \Omega_K} \mu_{\omega} | \hat f (\omega)|^2,
\end{align}
which we call the \emph{residual} of the sampling set.
Estimates on $V_*(\cR,\Omega,K)$ can be used, in conjunction with a sampling theorem involving 
Fourier measurements on the whole $\hat{\bbR}^d$, to control the quantities $V(\cR,\Omega,K)$ \eqref{eq_V} and 
$W(\Omega_K)$ \eqref{eq_W} and thus obtain a stable numerical reconstruction form finite Fourier data.

In this article we obtain several results on stable sampling rates and stability barriers, covering 
different reconstruction spaces and nonuniform sampling sets in arbitrary dimensions with close to critical density. 
The results are obtained from a general method that allows us to remove the dependence of both $V(\cR,\Omega,K)$
and $V_*(\cR,\Omega,K)$ on the underlying sampling set $\Omega$. In this way, we are able to transfer stable sampling rates as well 
as stability barriers from uniform sampling to nonuniform sampling.

\subsection{Our contribution}
\label{sec_contrib}
In this article we contribute to (i), (ii) and (iii) in the following ways. (The corresponding proofs are given in 
Section \ref{sec_gran_finale}.)

\subsubsection*{Explicit estimates of lower frame bounds}
We derive an explicit estimate of the lower Fourier frame bound for general symmetric convex bodies $D$ and sampling sets 
$\Omega$ having gap $\delta_{D^\circ}(\Omega)$ arbitrarily close to the critical value $1/4$.
\begin{theorem}
\label{th_samp}
Let $D \subseteq \bbR^d$ be a centered symmetric convex body and let
$\Omega \subseteq \hat{\bbR}^d$ be a closed countable set such that $\delta_{D^\circ}=\delta_{D^\circ}(\Omega)<1/4$. 
Then
\begin{align*}
A \norm{f}_2^2\leq\sum_{\omega\in\Omega} \mu_\omega|\hat f(\omega)|^2,
\qquad f \in\cL^2(D),
\end{align*}
where
\begin{align}
\label{eq_frame_bound}
A &= \mes{D^\circ} \mes{D} \left(\frac{\delta_{D^{\circ}}\kappa^2}{6}\right)^d 
\cos\left(2\pi\delta_{D^\circ}(1+\kappa)^2\right)^2,
\\
\kappa &= \left( \frac{1}{\sqrt{4\delta_{D^{\circ}}}} - 1\right) \left( 1- \frac{1}{d+2} \right),
\end{align}
and $\mu_\omega$ are the Voronoi weights associated with the norm induced by $D^\circ$ -- see Section \ref{s:notation}. In particular, $A>0$ whenever $\delta_{D^\circ}<1/4$.
\end{theorem}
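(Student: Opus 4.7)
The plan is to interpret the Voronoi-weighted sum as the squared $\cL^2$-norm of a piecewise-constant approximation to $\hat f$. Define $P\hat f(\xi)=\hat f(\omega)$ for $\xi\in V_\omega$, where $V_\omega$ is the Voronoi cell of $\omega$ in the $D^\circ$-norm. Since the cells partition $\hRdst$ with $\mu_\omega=|V_\omega|$, we have
\[
\|P\hat f\|_2^2=\sum_{\omega\in\Omega}\mu_\omega|\hat f(\omega)|^2.
\]
The problem then reduces to bounding $\|\hat f-P\hat f\|_2$ from above in terms of $\|f\|_2$: any bound of the form $\|\hat f-P\hat f\|_2\le(1-\sqrt{A})\|f\|_2$ yields, via the reverse triangle inequality, the desired lower frame bound $\|P\hat f\|_2\ge\sqrt{A}\|f\|_2$.

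For $\xi\in V_\omega$, the polar duality $|(\xi-\omega)\cdot x|\le\|\xi-\omega\|_{D^\circ}\|x\|_D\le\delta_{D^\circ}$ for $x\in D$, combined with the integral representation $\hat f(\xi)-\hat f(\omega)=\int_D f(x)(e^{-2\pi\I(\xi-\omega)\cdot x}-1)e^{-2\pi\I\omega\cdot x}\D x$, gives a naive oscillation estimate of order $\sin(\pi\delta_{D^\circ})$. This yields a positive frame constant only when $\delta_{D^\circ}$ is appreciably smaller than $1/4$, matching the older results mentioned in the introduction. To reach the sharp condition $\delta_{D^\circ}<1/4$, I would mollify $f$ by convolution with a non-negative compactly supported kernel $\eta$ of width $\kappa$ in the $D$-norm, normalized so that $\hat\eta(0)=1$ and whose Fourier transform factorizes along the axes induced by $D^\circ$ (a natural candidate being a symmetric, $d$-fold tensor-product Fej\'er-type kernel, whose second moment gives rise to the factor $(\delta_{D^\circ}\kappa^2/6)^d$). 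This enlarges the support of $f$ from $D$ to $(1+\kappa)D$ while damping the oscillation of $\hat f$ across each cell. After two Cauchy--Schwarz applications---one to handle the smoothing, one to control the oscillation over $V_\omega$---one expects to replace the crude $\sin(\pi\delta_{D^\circ})$ by a factor of the form $\cos(2\pi\delta_{D^\circ}(1+\kappa)^2)$, which is positive precisely when $(1+\kappa)^2\delta_{D^\circ}<1/4$. The squared exponent on $(1+\kappa)$ then reflects the two successive enlargements of the effective support.

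The parameter $\kappa$ must then be chosen to balance two competing effects: enlarging $\kappa$ strengthens the smoothing but widens the effective support of $f$, weakening the oscillation bound. The constraint $(1+\kappa)^2\delta_{D^\circ}<1/4$ forces $\kappa<1/\sqrt{4\delta_{D^\circ}}-1$, and the specific choice $\kappa=(1-\frac{1}{d+2})(1/\sqrt{4\delta_{D^\circ}}-1)$ will arise from optimizing the product of the cosine factor against the $d$-dimensional normalization of $\eta$, accounting for the geometric prefactor $|D||D^\circ|(\delta_{D^\circ}\kappa^2/6)^d$ in the final bound; the factor $1-\frac{1}{d+2}$ is reminiscent of the optimal moment constants for a symmetric product kernel in $\Rdst$.

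The main obstacle will be the mollification step: choosing $\eta$ with enough care that the cosine factor $\cos^2(2\pi\delta_{D^\circ}(1+\kappa)^2)$ emerges exactly, rather than a weaker surrogate, while simultaneously keeping the geometric factors $|D|$ and $|D^\circ|$ intact. This will require a mollifier whose Fourier transform factorizes cleanly against the $D^\circ$-geometry, and careful bookkeeping of the two successive oscillation estimates so that only one $\cos$-factor (and not, say, a $\sin$-factor or a strictly larger argument) survives the final Cauchy--Schwarz step.
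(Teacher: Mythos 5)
Your plan—encode the weighted sum as $\|P\hat f\|_2^2$ for the piecewise-constant interpolant $P\hat f$, bound $\|\hat f - P\hat f\|_2$, and invoke the reverse triangle inequality—is exactly the classical oscillation-estimate route, and the paper goes out of its way to say that this route does \emph{not} reach the critical density $1/4$ in higher dimensions (see the remarks preceding the theorem and the start of Section~\ref{sec_real_work}). Your proposed remedy, mollifying $f$, does not supply the missing ingredient. Two problems. First, replacing $f$ by $f * \eta$ enlarges the time-domain support to $(1+\kappa)D$, which makes $\hat f$ oscillate \emph{more} across a Voronoi cell (Bernstein's inequality gives a derivative bound proportional to the support radius), so the claim that mollification ``damps the oscillation of $\hat f$ across each cell'' is backwards; what mollification buys is faster decay of $\hat f$ at infinity, which is irrelevant to the cellwise oscillation that controls $\|\hat f-P\hat f\|_2$. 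Second, and more fundamentally, there is no mechanism in your outline by which a $\cos$-factor with argument $2\pi\delta_{D^\circ}(1+\kappa)^2$ would emerge from Cauchy--Schwarz applied to an oscillation bound; you would at best get a $\sin$- or modulus-of-continuity factor, and the resulting lower bound would vanish well short of $\delta_{D^\circ}=1/4$.

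The cosine is not a byproduct of smoothing: it is the content of Beurling's $\cL^\infty$ balayage theorem (stated here as Proposition~\ref{prop_linf}, in the sharp Olevskii--Ulanovskii form), which says that for $\delta_{D^\circ}(\Omega)<1/4$ and any distribution $f$ supported in $D$, $\cos(2\pi\delta_{D^\circ})\|\hat f\|_\infty \le \sup_{\omega\in\Omega}|\hat f(\omega)|$. That is the theorem you need and that your proposal never invokes. The paper's actual argument (Proposition~\ref{prop_techncial} $\Rightarrow$ Corollary~\ref{coro_expl} $\Rightarrow$ Corollary~\ref{coro_expl_2}) upgrades the $\cL^\infty$ inequality to a weighted $\cL^2$ inequality by a frequency-side localization, not by mollifying $f$: one sets $\widehat{g_\eta}(\xi) := \hat f(\xi)\,\hat h(\xi-\eta)$ with $h(x)=\varepsilon^{-d}\psi(x/\varepsilon)$, so $g_\eta = f*(\E^{2\pi\I\eta\cdot}h)$ is supported in $(1+\varepsilon)D$; applying Proposition~\ref{prop_linf} to $g_\eta$ and evaluating at $\xi=\eta$ gives a pointwise bound $K|\hat f(\eta)|\le\sup_\omega|\hat f(\omega)||\hat h(\omega-\eta)|$ with $K=\cos(2\pi(1+\varepsilon)\delta_{D^\circ})$, and integrating in $\eta$ gives the $\cL^2$ estimate. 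Here the support enlargement worsens the cosine argument (not the oscillation) but is what makes $\hat h$ decay and hence makes the sup over $\omega$ summable. You correctly guessed that the $(1+\kappa)^2$ comes from \emph{two} support-style enlargements, and indeed the second one is the $\eta$-thickening in Lemma~\ref{lemma_add_weights} (extraction of a separated subset $\bar\Omega\subseteq\hat\bbR^d$ with controlled gap, which converts the unweighted Corollary~\ref{coro_expl} into the Voronoi-weighted Corollary~\ref{coro_expl_2}, at the price of the $\mes{D^\circ}(\eta/3)^d$ factor); this is a discrete covering step, not a second Cauchy--Schwarz. In short, the outline misses the central input (Beurling/Olevskii--Ulanovskii balayage) and replaces the two genuinely distinct steps (time-side convolution for $\cL^\infty\to\cL^2$, and separated-subset extraction for weights) by a single vague mollification that does not do the work.
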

Theorem \ref{th_samp} requires no separation conditions and is valid even when the sampling set has high-density 
clusters. It improves on the multivariate estimates from \cite{GrochenigIrregular}, which were used to derive stability 
bounds for the Nonuniform Fast Fourier Transform in \cite{PottsTasche}. See also \cite{stta06,fegrst95}.

One important feature of the explicit bound in \eqref{eq_frame_bound} is that it 
shows that all sets $\Omega$ with $\delta_{D^\circ}(\Omega)
\leq \tfrac{1}{4}(1-\varepsilon)$ share a common lower frame bound, depending only on $\varepsilon$. This is essential for the 
applicability of our universality results below. 

\subsubsection*{Universality of stable sampling rates and stability barriers}
We prove that the stability / accuracy trade-off, expressed by stable sampling rates and stability barriers, is 
universal in Fourier sampling problems, in the sense that it is largely 
independent of the underlying geometry of the sampling set $\Omega$. We consider functions defined on a centered 
symmetric convex body $D\subseteq \bbR^d$. Following Theorem \ref{th_samp}, throughout this section, \emph{we consider 
the Voronoi weights associated with $\Omega$, with respect to the norm induced by $D^\circ$} -- see Section 
\ref{s:notation} for details. In particular the quantities $V(\cR,\Omega, K)$ and $V_*(\cR,\Omega, K)$ introduced
in \eqref{eq_V} and \eqref{eq_Vstar} are defined with respect to these weights.

\head{Transference of concentration and residual measures}.
Given a reconstruction subspace $\cR$, we consider the following quantities associated intrinsically with $\cR$:
\begin{align}
\label{eq_VR}
&V(\cR,K) = \inf_{\substack{f\in\cR \\ \|f\|_2=1}}  \int_{\abs{\xi} \leq K} \abs{\hat{f}(\xi)}^2 d\xi,
\\
\label{eq_VstarR}
&V_*(\cR,K) = \sup_{\substack{f\in\cR \\ \|f\|_2=1}}  \int_{\abs{\xi}>K} \abs{\hat{f}(\xi)}^2 d\xi.
\end{align}

We show that these quantities are essentially equivalent to the ones related to a sampling set $\Omega$,
given by \eqref{eq_V} and \eqref{eq_Vstar}. Specifically, 
we prove the following estimates.

\begin{theorem}
\label{th_1}
Let $D\subseteq \bbR^d$ be a centered symmetric convex body, and $\cR \subseteq \cL^2(D)$ a subspace.
Let $L>0$, $\Omega \subseteq \hat{\bbR}^d$ a closed countable set
such that $\delta_{B_1}(\Omega) \leq L$  and $\alpha \in (0,1)$. Then
\begin{align}
\label{eq_V1}
&V(\cR,\Omega, K) \leq C V(\cR,K+M) + C e^{-cM^\alpha}, \qquad K,M>0,
\\
\label{eq_Vstar1}
&V_*(\cR,\Omega, K) \leq C V_*(\cR,K-M) + C e^{-cM^\alpha}, \qquad K>M>0,
\end{align}
where $c,C$ are constants that only depend on $\alpha, L$ and $d$. 

In addition, if $\delta_{D^{\circ}}(\Omega) \leq 1/4(1-\varepsilon)$ for some $\varepsilon>0$
(i.e. the gap of $\Omega$ is below the critical value for the spectrum $D$), then
\begin{align}
\label{eq_V2}
& V(\cR,K) \leq C V(\cR,\Omega, K+M) + C e^{-cM^\alpha},
\\
\label{eq_Vstar2}
& V_*(\cR,K) \leq C V_*(\cR,\Omega, K-M) + C e^{-cM^\alpha},
\qquad K,M>0,
\end{align}
where $c,C$ are constants that depend only on $\varepsilon$, $\alpha$ and $d$.

(Here, the quantities $V(\cR,\Omega, K)$ and $V_*(\cR,\Omega, K)$ are defined by using the Voronoi weights
associated with $\Omega$ and the norm induced by $D^\circ$.)
\end{theorem}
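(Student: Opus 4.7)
The key tool I would introduce is a smooth compactly supported Gevrey cutoff of $D$: a real-valued $\chi\in C^\infty_c(\hat{\bbR}^d)$ with $\chi\equiv 1$ on $D$, $\mathrm{supp}\,\chi\subseteq D+sB_1$ for small $s>0$, and of Gevrey class $1/\alpha$, so that
\[
|\hat\chi(\xi)|\le C_0\,\E^{-c_0|\xi|^\alpha},\qquad \xi\in\hat{\bbR}^d.
\]
Since $\chi\equiv 1$ on the support of $f$, one has $\chi f=f$, which yields the convolution identity $\hat f=\hat\chi*\hat f$. Combined with a weighted Cauchy--Schwarz, this gives the pointwise bound
\[
|\hat f(\xi)|^2\le \|\hat\chi\|_1\int |\hat f(\eta)|^2\,|\hat\chi(\xi-\eta)|\,\D\eta.
\]
A routine maximal function estimate, based on the fact that $\delta_{B_1}(\Omega)\le L$ forces Voronoi cells to have diameter $\le 2L$, gives $\sum_{\omega\in\Omega}\mu_\omega|\hat\chi(\omega-\eta)|\le C_1$ uniformly in $\eta$; and by splitting $\E^{-c_0r^\alpha}=\E^{-c_0r^\alpha/2}\E^{-c_0r^\alpha/2}$ one also gets $\sum_{\omega\in E}\mu_\omega|\hat\chi(\omega-\eta)|\le C_1\E^{-cM^\alpha}$ whenever every $\omega\in E$ satisfies $|\omega-\eta|>M$.

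The inequalities \eqref{eq_V1} and \eqref{eq_Vstar1} now follow quickly. For \eqref{eq_V1}, multiply the pointwise bound by $\mu_\omega$, sum over $\omega\in\Omega_K$, apply Fubini, and split the $\eta$-integral at $|\eta|=K+M$: on $B_{K+M}$ use the uniform bound $C_1$, and on its complement use the exponential bound (since then all $\omega\in\Omega_K$ satisfy $|\omega-\eta|>M$). This yields
\[
\sum_{\omega\in\Omega_K}\mu_\omega|\hat f(\omega)|^2\le C\int_{B_{K+M}}|\hat f(\eta)|^2\,\D\eta+C\,\E^{-cM^\alpha}\|f\|_2^2.
\]
Specializing $f$ to a near-minimizer of $V(\cR,K+M)$ gives \eqref{eq_V1}. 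The proof of \eqref{eq_Vstar1} is entirely symmetric, summing over $\Omega\setminus\Omega_K$ and splitting the $\eta$-integral at $|\eta|=K-M$.

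For the opposite direction \eqref{eq_V2} and \eqref{eq_Vstar2}, the idea is to combine the already-proved pointwise form of \eqref{eq_Vstar1} with the Beurling lower frame bound from Theorem~\ref{th_samp}, which is available precisely because $\delta_{D^\circ}(\Omega)\le\tfrac14(1-\varepsilon)$. Concretely, from $A\|f\|_2^2\le\sum_\omega\mu_\omega|\hat f(\omega)|^2=\sum_{\Omega_{K+M}}+\sum_{\Omega\setminus\Omega_{K+M}}$ and the pointwise form of \eqref{eq_Vstar1} applied with $M\to M/2$, one obtains, after rearrangement, that $\int_{|\eta|>K+M/2}|\hat f|^2$ must be large whenever $\sum_{\Omega_{K+M}}\mu_\omega|\hat f(\omega)|^2$ is small; equivalently, $\int_{B_K}|\hat f|^2$ is controlled by $\sum_{\Omega_{K+M}}\mu_\omega|\hat f(\omega)|^2$ up to an $\E^{-cM^\alpha}$-error. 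Taking the $f$ that realizes $V(\cR,\Omega,K+M)$ delivers \eqref{eq_V2}; \eqref{eq_Vstar2} is dual, using $V_*=1-V$ (at fixed $f$).

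The main obstacle is closing this last algebraic loop cleanly. The constant $C_{\mathrm{conv}}=\|\hat\chi\|_1 C_1$ arising from the convolution argument is not a priori comparable to the Beurling constant $A$, and a naive substitution leaves behind an $M$-independent residual of order $1-A/C_{\mathrm{conv}}$. Handling this requires either a careful refinement of the Gevrey cutoff (balancing $\|\hat\chi\|_1$ against the decay of $\hat\chi$ and against the explicit $A$ coming from \eqref{eq_frame_bound}), or equivalently, revisiting the oscillation-based proof of Theorem~\ref{th_samp} in a localized form to extract a frame bound on $\int_{B_K}|\hat f|^2$ from the samples in $\Omega_{K+M}$ alone with the correct quantitative constants. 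This is where all the hypotheses on $\varepsilon$ (and the shape of $D$, via the polar $D^\circ$) must enter.
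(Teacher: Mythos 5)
Your argument for \eqref{eq_V1} and \eqref{eq_Vstar1} is correct and is essentially the paper's: a compactly supported Gevrey window $\chi\equiv 1$ on $D$ with $|\hat\chi(\xi)|\lesssim \E^{-c|\xi|^{\alpha}}$, the reproducing identity $\hat f=\hat\chi*\hat f$, and a Cauchy--Schwarz/Fubini splitting of the $\eta$-integral (the paper phrases the same estimate as a Schur test on the kernel of the error operator $S-S^{R}$ in Proposition \ref{prop_residual}, but the content is identical). The uniform bound $\sum_{\omega}\mu_{\omega}|\hat\chi(\omega-\eta)|\le C_1$ from $V_{\omega}\subseteq\omega+B_{L}$ is also fine.

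For \eqref{eq_V2} and \eqref{eq_Vstar2}, however, there is a genuine gap, and you have in fact diagnosed it yourself: combining the \emph{global} frame inequality $A\|f\|^{2}\le\sum_{\omega}\mu_{\omega}|\hat f(\omega)|^{2}$ with the tail estimate leaves the term $(1-A/C_{\mathrm{conv}})\|f\|^{2}$, which does not decay in $M$. This cannot be repaired by tuning the cutoff: the Bessel constant for Voronoi-weighted samples is at least of order $1$ (it tends to $1$ under refinement of $\Omega$, by Riemann-sum convergence to $\|\hat f\|_2^2$), while the lower bound $A$ of Theorem \ref{th_samp} is strictly and unavoidably smaller, so $1-A/C_{\mathrm{conv}}$ is bounded away from $0$. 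The missing idea is that one needs a \emph{pointwise}, not global, lower estimate: Beurling's balayage inequality (Proposition \ref{prop_linf}), $\cos(2\pi\delta_{D^{\circ}})\|\hat g\|_{\infty}\le\sup_{\omega}|\hat g(\omega)|$, applied to the windowed functions $\widehat{g_{\eta}}=\hat f\cdot\hat h(\cdot-\eta)$ with $h$ a dilated bump. Evaluating at $\xi=\eta$ gives $K|\hat f(\xi)|\le\sup_{\omega}|\hat f(\omega)||\hat h(\omega-\xi)|$, which localizes: integrating over $Y$ and splitting the $\omega$'s into $\Omega\cap(Y+B_R)$ and the rest yields Proposition \ref{prop_techncial}, in which the error term genuinely decays in $R$. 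A second ingredient you omit is the passage from unweighted sums over a separated subset to Voronoi-weighted sums over all of $\Omega$ (Lemma \ref{lemma_add_weights}), which requires choosing an $f$-dependent subset $\bar\Omega(f)$ and is why the explicit, $\Omega$-uniform constant in Corollary \ref{coro_expl} matters. Finally, your remark that \eqref{eq_Vstar2} follows ``by duality, $V_{*}=1-V$'' does not apply on the sampling side (the weighted frame is not Parseval); the paper instead reuses the same localized estimate with $Y=\hat{\bbR}^{d}\setminus B_{K}$.
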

We also provide a version of Theorem \ref{th_1} for the critical case $D=[-1/2,1/2]^d$
and $\Omega=\Zst^d$. The error decay is much milder in this case.
\begin{theorem}
\label{th_2}
Let $\cR \subseteq \cL^2([-1/2,1/2]^d)$ be a subspace. Then
\begin{align*}
& V(\cR,K) \leq C V(\cR,\Zst^d, M) + C \tfrac{K}{M},
\\
& V_*(\cR,K) \leq C V_*(\cR,\Zst^d, M) + C \tfrac{M}{K},
\qquad K,M>0,
\end{align*}
where $C$ is a constant that depends only on $d$. (Note that converse bounds are provided by
the first part of Theorem \ref{th_1}.)
\end{theorem}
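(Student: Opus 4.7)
The main tool is Shannon interpolation. Since $f \in \cL^2([-1/2,1/2]^d)$, its Fourier transform $\hat f$ lies in the Paley--Wiener space $\mathrm{PW}([-1/2,1/2]^d)$ and admits the expansion $\hat f(\xi) = \sum_{k\in \Zst^d} \hat f(k)\,\sinc_k(\xi)$ with $\sinc_k(\xi) := \prod_{j=1}^d \sin(\pi(\xi_j-k_j))/[\pi(\xi_j-k_j)]$; the system $\{\sinc_k\}_{k\in\Zst^d}$ is an orthonormal basis of $\mathrm{PW}$, so Parseval gives $\sum_{k}|\hat f(k)|^2 = \int_{\bbR^d}|\hat f|^2 = \|f\|_2^2$. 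I would split $\hat f = \hat f_{\leq M} + \hat f_{>M}$ according to whether $|k|\leq M$ or $|k|>M$ in this expansion.

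For the first inequality, for any $f \in \cR$ with $\|f\|_2=1$, the estimate $|a+b|^2 \leq 2|a|^2 + 2|b|^2$ combined with the orthonormality of the $\sinc_k$ yields
\begin{equation*}
\int_{B_K}|\hat f|^2 \;\leq\; 2\int_{\bbR^d}|\hat f_{\leq M}|^2 + 2\int_{B_K}|\hat f_{>M}|^2 \;=\; 2\sum_{|k|\leq M}|\hat f(k)|^2 + 2\int_{B_K}|\hat f_{>M}|^2,
\end{equation*}
and a pointwise Cauchy--Schwarz bound on the Shannon expansion of $\hat f_{>M}$, using $\sum_{|k|>M}|\hat f(k)|^2 \leq 1$, gives $|\hat f_{>M}(\xi)|^2 \leq \sum_{|k|>M}\sinc_k^2(\xi)$. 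Specializing to any $f^* \in \cR$ that (approximately) attains $V(\cR,\Zst^d,M)$ and using $V(\cR,K)\leq \int_{B_K}|\hat{f^*}|^2$ reduces the first inequality to the sinc-tail estimate $\int_{B_K}\sum_{|k|>M}\sinc_k^2(\xi)\,d\xi \leq C(d)\,K/M$. The second inequality is entirely symmetric: the analogous splitting gives $\int_{|\xi|>K}|\hat f|^2 \leq 2\int_{|\xi|>K}|\hat f_{\leq M}|^2 + 2\sum_{|k|>M}|\hat f(k)|^2$, and after taking the supremum over $f$ it reduces to the dual bound $\int_{|\xi|>K}\sum_{|k|\leq M}\sinc_k^2(\xi)\,d\xi \leq C(d)\,M/K$.

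The main technical obstacle is proving these two sinc-tail bounds with the sharp $K/M$ and $M/K$ rates. For $d=1$ they follow directly from the elementary estimates $\int_{-K}^{K}\sinc^2(\xi-k)\,d\xi \leq 2K/[\pi^2(k^2-K^2)]$ for $|k|>K$ and $\int_{|\xi|>K}\sinc^2(\xi-k)\,d\xi \leq C/(K-|k|)$ for $|k|<K$, summed over $|k|>M$ and $|k|\leq M$ respectively. In higher dimensions the one-dimensional coordinate tail $\sum_{|n-\xi_j|>R}\sinc^2(\xi_j-n)\leq C/R$ together with the Parseval identity $\sum_{k\in\Zst^d}\sinc_k^2(\xi)\equiv 1$ yields the uniform pointwise control $\sum_{|k|>M}\sinc_k^2(\xi)\leq C(d)/(M-K)$ for $\xi\in B_K$, $K\leq M/2$, via a product argument over coordinates. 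The delicate remaining step is to integrate this against the Euclidean ball $B_K$ (rather than the enclosing cube) in a way that extracts the sharp linear-in-$K$ dependence, rather than a cruder polynomial factor; this refinement, and its symmetric analogue for $V_*$, is what yields the polynomial decay of Theorem \ref{th_2} in the critical-density regime, as opposed to the exponential decay available in Theorem \ref{th_1} when $\delta_{D^\circ}(\Omega) < 1/4$.
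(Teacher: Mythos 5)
Your plan starts exactly as the paper's does: Shannon interpolation, split the $\sinc$ expansion at $|k|\leq M$, and bound the cross term. In dimension $d=1$ your reduction to the sinc-tail estimates is correct and reproduces the paper's Step 1 / Step 3. The gap is in higher dimensions, and it is not merely a ``delicate remaining step'': the sinc-tail estimate you reduce to is actually \emph{false} for $d\geq 2$. Indeed, splitting the outer sum according to which coordinate of $k$ is large and using $\sum_{n\in\bbZ}\sinc^2(\xi_j-n)\equiv 1$ in the remaining coordinates gives, for $\xi \in B_K$ and $M \geq 2K$,
\begin{equation*}
\sum_{|k|>M}\sinc_k^2(\xi) \;\asymp\; \frac{1}{M},
\end{equation*}
uniformly over a positive-measure fraction of $B_K$, so that
\begin{equation*}
\int_{B_K}\sum_{|k|>M}\sinc_k^2(\xi)\,d\xi \;\asymp\; \frac{K^d}{M},
\end{equation*}
not $K/M$; the symmetric tail integral for $V_*$ is similarly $\asymp M^d/K$, not $M/K$. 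The reason is structural: your pointwise Cauchy--Schwarz bound on $\hat f_{>M}(\xi)$ followed by integration estimates the Hilbert--Schmidt norm of the truncated $\sinc$-interpolation operator restricted to $B_K$, which is genuinely larger than its operator norm once $d\geq 2$. What is needed is the operator-norm bound, and a direct $d$-dimensional pointwise estimate cannot see it.

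The paper's Proposition \ref{p:converse_est} avoids this by proving the $d=1$ case first and then performing a coordinate-by-coordinate induction, applying the one-dimensional bound in $\xi_1$ for each fixed $\xi_*$ and controlling the resulting sum $\sum_{|k_*|\leq R}\int_{\hat\bbR}|\hat f(\xi_1,k_*)|^2\,d\xi_1$ by $\|f\|^2$ via Parseval in the remaining variables. This way each coordinate contributes an additive error $\lesssim (M/R)\|f\|^2$, so the total error stays $\lesssim d\,(M/R)\|f\|^2$ rather than accumulating a $M^{d-1}$ factor. To repair your argument you would need to replace the integrated pointwise bound by this nested, coordinate-wise application of the one-dimensional operator-norm estimate (or by an equivalent direct operator-norm estimate for the truncated interpolation operator on $\cL^2(B_K)\oplus\ell^2(\bbZ^d\setminus B_M)$).
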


Theorems \ref{th_1} and \ref{th_2} allow us to transfer stability results from one sampling set to another. 
While the estimates for residuals  are useful to transfer sufficient stable sampling rates, the 
ones on concentration measures are useful to transfer necessary conditions, i.e.\ stability barriers.

\head{Stable sampling rates}.
To be specific, we quantify decay rates with power laws. For $\alpha>0$, we say that a sequence of subspaces 
$\sett{\cR_N: N \geq 1}$ has \emph{residual decay
of order $\alpha$} if given $\theta>0$, there exists a constant $c_\theta>0$ such that 
$$
\sup_{N \geq 1} 
V_*(\cR_N,c_\theta N^\alpha) \leq \theta.
$$ 
The next result, that follows readily from Theorems \ref{th_samp} and 
\ref{th_1}, shows 
that the stable sampling rate in Fourier sampling is a notion intrinsically related 
to a reconstruction space, but does not depend on the underlying geometry of the sampling points. In particular, this 
improves on \cite[Thm.~3.3]{AGH2DNUGS}, by covering sampling sets with gap arbitrarily close to the critical value 
$1/4$, and \cite[Thm.~4.5]{1DNUGS}, by covering higher dimensions.

\begin{corollary}
\label{coro_univ_rate}
Let $D \subseteq \bbR^d$ be a centered symmetric convex body. Let $\sett{\cR_N: N \geq 1}$ be a sequence of subspaces 
of $\cL^2(D)$ with residual decay of order $\alpha$. Let $\varepsilon>0$. Then there exist constants $A,c>0$ 
that depend only on $\varepsilon$ and $D$, such that, 
for every closed countable set $\Omega \subseteq \hat{\bbR}^d$ with $\delta_{D^\circ}(\Omega) \leq 
\tfrac{1}{4}(1-\varepsilon)$,
the following stable sampling inequality holds:
\begin{align}
\label{eq_samp_univ}
A \norm{f}_2^2\leq\sum_{\omega\in \Omega \cap B_{c N^\alpha}} \mu_\omega|\hat f(\omega)|^2, 
\qquad f \in \cR_N,
N \geq 1.
\end{align}
\end{corollary}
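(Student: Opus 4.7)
The plan is to combine the universal lower frame bound from Theorem \ref{th_samp} with the residual transference estimate \eqref{eq_Vstar1} in Theorem \ref{th_1}, and then to invoke the residual decay hypothesis to choose a truncation scale $K_N = O(N^\alpha)$. The first ingredient produces a global sampling inequality with a frame constant depending only on $\varepsilon$ and $D$; the second transfers the discrete tail $V_*(\cR_N,\Omega,K)$ to the intrinsic continuous quantity $V_*(\cR_N,K)$, which is then controlled at scale $K \asymp N^\alpha$.

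Since $\delta_{D^\circ}(\Omega)\leq \tfrac14(1-\varepsilon)$, Theorem \ref{th_samp} yields
\[
A(\delta_{D^\circ}(\Omega))\,\|f\|_2^2 \;\leq\; \sum_{\omega\in\Omega}\mu_\omega|\hat f(\omega)|^2,\qquad f\in\cL^2(D),
\]
where $A(\delta)$ denotes the right-hand side of \eqref{eq_frame_bound}. A short inspection shows that $A(\delta)$ is continuous on $(0,\tfrac14)$ with a strictly positive limit as $\delta\to 0^+$ (both $\delta\kappa^2$ and $\delta(1+\kappa)^2$ tend to $\tfrac14(1-\tfrac{1}{d+2})^2$, keeping the cosine factor bounded away from $0$), so $A_0 := \inf_{\delta\in (0,\tfrac14(1-\varepsilon)]}A(\delta) > 0$ depends only on $\varepsilon$ and $D$. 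Splitting the sum at $B_K$ and bounding the tail for a unit-norm $f\in\cR_N$ by $V_*(\cR_N,\Omega,K)$ reduces the task to producing $K = K_N = O(N^\alpha)$ with $V_*(\cR_N,\Omega,K_N) \leq A_0/2$.

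For this I would apply \eqref{eq_Vstar1} with a fixed exponent, say $\alpha'=1/2$. Convex-body comparability ($D^\circ\subseteq R B_1$ for some $R=R(D)$) gives $\delta_{B_1}(\Omega)\leq R\,\delta_{D^\circ}(\Omega)\leq R/4 =: L$, so Theorem \ref{th_1} produces constants $c,C>0$ depending only on $D$ with
\[
V_*(\cR_N,\Omega,K)\;\leq\; C\,V_*(\cR_N,K-M)+Ce^{-cM^{1/2}},\qquad K>M>0.
\]
I would then fix $M_0\geq 1$ large enough that $Ce^{-cM_0^{1/2}}\leq A_0/4$, apply residual decay of order $\alpha$ with threshold $\theta := A_0/(4C)$ to obtain $c_\theta>0$ with $V_*(\cR_N,c_\theta N^\alpha)\leq\theta$ for all $N\geq 1$, and set $K_N := (c_\theta+M_0)N^\alpha$. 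Since $N^\alpha\geq 1$ we have $K_N - M_0 \geq c_\theta N^\alpha$, so monotonicity of $V_*(\cR_N,\cdot)$ gives $V_*(\cR_N,\Omega,K_N)\leq C\theta + A_0/4 = A_0/2$, yielding \eqref{eq_samp_univ} with $A=A_0/2$ and $c=c_\theta+M_0$.

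The main obstacle is essentially bookkeeping: verifying that the lower frame bound $A_0$ is uniform over the whole range $\delta\in(0,\tfrac14(1-\varepsilon)]$ rather than only at the extremal endpoint, and that the additive constant $M_0$ can be absorbed into the multiplicative factor defining $K_N$ --- which is precisely what the free prefactor $c_\theta$ in the definition of residual decay is designed to allow.
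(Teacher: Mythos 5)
Your argument is correct and follows essentially the same route as the paper's: Theorem \ref{th_samp} for the uniform global lower bound, the residual transference estimate \eqref{eq_Vstar1} from Theorem \ref{th_1}, the residual-decay hypothesis to choose $K_N\asymp N^\alpha$, and then subtracting the tail. Your bookkeeping differs slightly (you fix $M_0$ once, whereas the paper takes $M$ proportional to $N^\alpha$ so the exponential term decays in $N$, and you spell out why the frame constant in \eqref{eq_frame_bound} is uniformly bounded below for $\delta\in(0,\tfrac14(1-\varepsilon)]$, which the paper asserts without detail), but these are cosmetic, not a different approach.
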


This result provides a sufficient condition for stable recovery. In order to derive stable sampling rates for specific reconstruction spaces -- that is, a sufficient scaling of $N$ and $K$ for stable recovery in $\cR_N$ -- we need to combine Corollary \ref{coro_univ_rate} with residual estimates for concrete sequences of reconstruction subspaces. In several settings, these are available in the form of residual estimates for specific sampling sets. We say that $\sett{\cR_N: N \geq 1}$ has \emph{residual decay of order $\alpha$ with respect to the set $\Omega$} if given $\theta>0$, there  exists a constant $c_\theta>0$ such that 
$$\sup_{N \geq 1} V_*(\cR_N,\Omega,c_\theta N^\alpha) \leq \theta.$$ The following result shows that it is sufficient to estimate the order of residual decay on a specific sampling set.

\begin{corollary}
\label{coro_transfer}
Let $D \subseteq \bbR^d$ be a centered symmetric convex body and let $\sett{\cR_N: N \geq 1}$ be a sequence of 
subspaces of $\cL^2(D)$.
\begin{itemize}
\item[(a)] Let $\Omega \subseteq \hat{\bbR}^d$ be a closed countable set such that $\delta_{D^{\circ}}(\Omega) < 1/4$. 
If $\sett{\cR_N: N \geq 1}$ has residual decay of order $\alpha$ with respect to $\Omega$, then it has
residual decay of order $\alpha$.
\item[(b)] If $D=[-1/2,1/2]^d$ and $\sett{\cR_N: N \geq 1}$ has residual decay 
of order $\alpha$ with respect to $\mathbb{Z}^d$, then it has residual decay of order $\alpha$.
\end{itemize} 
\end{corollary}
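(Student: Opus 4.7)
My plan is to deduce both parts of Corollary \ref{coro_transfer} directly from the transference estimates \eqref{eq_Vstar2} of Theorem \ref{th_1} and the corresponding estimate in Theorem \ref{th_2}. The point is that the residual decay assumption gives us control of $V_*(\cR_N, \Omega, \cdot)$ (or $V_*(\cR_N, \mathbb{Z}^d, \cdot)$) at scale $N^\alpha$, and the transference estimates let us convert this into control of the intrinsic residual $V_*(\cR_N, \cdot)$ at a comparable scale, provided we balance the two error terms carefully. To avoid the notational collision between the residual decay order $\alpha$ and the stretched-exponential exponent in Theorem \ref{th_1}, I will denote the latter by $\beta \in (0,1)$ (fixed once and for all, say $\beta=1/2$).

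For part (a): since $\delta_{D^\circ}(\Omega) < 1/4$, I pick $\varepsilon > 0$ with $\delta_{D^\circ}(\Omega) \leq (1-\varepsilon)/4$, so Theorem \ref{th_1} applies and yields constants $c,C>0$ (depending only on $\varepsilon, \beta, d$) such that
\begin{equation*}
V_*(\cR_N, K) \leq C\, V_*(\cR_N, \Omega, K - M) + C\, e^{-c M^\beta}, \qquad K > M > 0.
\end{equation*}
Given $\theta > 0$, I first choose $M_\theta$ large enough (depending only on $\theta$, $C$, $c$, $\beta$) so that $C e^{-c M_\theta^\beta} \leq \theta / 2$. By the residual decay hypothesis with respect to $\Omega$, there exists $c'_\theta := c_{\theta/(2C)} > 0$, independent of $N$, such that $V_*(\cR_N, \Omega, c'_\theta N^\alpha) \leq \theta/(2C)$ for all $N \geq 1$. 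Setting $c_\theta := c'_\theta + M_\theta$ and $K := c_\theta N^\alpha$, the inequality $N \geq 1$ together with $\alpha > 0$ gives $K - M_\theta \geq c'_\theta N^\alpha + M_\theta(N^\alpha - 1) \geq c'_\theta N^\alpha$, so plugging $M = M_\theta$ into the transference estimate yields $V_*(\cR_N, K) \leq C \cdot \theta/(2C) + \theta/2 = \theta$, as desired.

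For part (b), the argument is the same in spirit but uses Theorem \ref{th_2}, which gives
\begin{equation*}
V_*(\cR_N, K) \leq C\, V_*(\cR_N, \mathbb{Z}^d, M) + C\, \tfrac{M}{K}, \qquad K, M > 0.
\end{equation*}
Here the residual term is only polynomial, so I cannot fix $M$ independently of $N$; instead I scale $M$ with $N^\alpha$. Given $\theta > 0$, the hypothesis yields a constant $c'_\theta := c_{\theta/(2C)} > 0$ with $V_*(\cR_N, \mathbb{Z}^d, c'_\theta N^\alpha) \leq \theta/(2C)$ for every $N \geq 1$. I set $M := c'_\theta N^\alpha$ and $K := c_\theta N^\alpha$ with $c_\theta := 2 C c'_\theta / \theta$, so that $CM/K = \theta/2$, and Theorem \ref{th_2} gives $V_*(\cR_N, K) \leq \theta$.

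There is no genuine obstacle here — the content really lives in Theorems \ref{th_1} and \ref{th_2}. The only thing to watch is bookkeeping: $c_\theta$ must be independent of $N$, which forces us in part (a) to absorb the additive constant $M_\theta$ into the coefficient of $N^\alpha$ (exploiting $\alpha > 0$ and $N \geq 1$), and in part (b) to pick $c_\theta$ proportional to $c'_\theta/\theta$ so that the polynomial tail $CM/K$ becomes a fraction of $\theta$.
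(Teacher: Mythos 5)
Your proof is correct and follows essentially the same route as the paper: apply the transference estimate for the residual (\eqref{eq_Vstar2} from Theorem \ref{th_1} for part (a), Theorem \ref{th_2} for part (b)), split the target $\theta$ into two halves, and absorb the constants into $c_\theta$ using that $N^\alpha\geq 1$. The paper spells out (b) and states that (a) is similar; you supply the bookkeeping for both, and your handling of the additive shift $M_\theta$ in part (a) (folding it into the coefficient via $K-M_\theta\geq c'_\theta N^\alpha$ and the monotonicity of $V_*(\cR_N,\Omega,\cdot)$) is exactly the missing detail.
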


\head{Stability barriers}. We say that $\alpha$ is a \emph{stability barrier for the sampling problem
associated with $\Omega$ and $\sett{\cR_N: N \geq 1}$} if for every $c>0$ and $\gamma >0$
the validity of the sampling bound 
$$\inf_{N \geq 1} V(\cR_N, \Omega, c N^\gamma) >0$$
implies that $\gamma \geq \alpha$.

\begin{corollary}
\label{coro_barrier}
Let $D \subseteq \bbR^d$ be a centered symmetric convex body and let $\sett{\cR_N: N \geq 1}$ be a sequence of 
subspaces of $\cL^2(D)$. Suppose that $\alpha$ is a stability barrier for the sampling problem associated with
a certain closed countable set $\Omega_0 \subseteq \hat{\bbR}^d$. Assume additionally that either
\begin{itemize}
\item $\delta_{D^{\circ}}(\Omega_0) < 1/4$ (i.e. the gap is below the critical value), or
\item $D=[-1/2,1/2]^d$ and $\Omega_0=\Zst^d$.
\end{itemize}
Then $\alpha$ is a stability barrier for the sampling problem associated with any 
closed countable set $\Omega \subseteq \hat{\bbR}^d$
with $\delta_{B_1}(\Omega) < +\infty$.
\end{corollary}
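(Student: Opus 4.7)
The strategy is to chain the transference estimates of Theorems \ref{th_1} and \ref{th_2}: first converting a hypothetical sampling lower bound on $\Omega$ into an intrinsic lower bound on $V(\cR_N,K)$, and then pushing this intrinsic bound back to a sampling lower bound on the reference set $\Omega_0$, where the assumed stability barrier applies. Throughout I take the contrapositive formulation: assuming $V(\cR_N,\Omega,cN^\gamma)\geq \eta$ for all $N\geq 1$ and some fixed $c,\gamma,\eta>0$, I aim to conclude $\gamma\geq\alpha$. The free exponent in Theorem \ref{th_1}, call it $\beta\in(0,1)$, should not be confused with the corollary's $\alpha$; I fix $\beta$ arbitrarily once and for all.

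For the first step, since $\delta_{B_1}(\Omega)<\infty$, estimate \eqref{eq_V1} gives
\[
\eta \;\leq\; C\,V(\cR_N,cN^\gamma+M) + Ce^{-cM^\beta}.
\]
Choosing $M=M_0$ large enough (depending only on $\eta$, $\beta$, and the constants from Theorem \ref{th_1}) to make the exponential tail at most $\eta/2$, and using the monotonicity of $K\mapsto V(\cR_N,K)$ together with $N^\gamma\geq 1$, I absorb the additive $M_0$ into the prefactor and arrive at $V(\cR_N,c_1N^\gamma)\geq \eta_1$ for suitable positive constants $c_1,\eta_1$.

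For the second step I split into the two cases. In case (i), where $\delta_{D^\circ}(\Omega_0)<1/4$, I apply the reverse transference \eqref{eq_V2}:
\[
\eta_1 \;\leq\; C\,V(\cR_N,\Omega_0,c_1N^\gamma+M') + Ce^{-cM'^\beta},
\]
and again fix $M'=M_1$ large enough to absorb the exponential tail, obtaining $V(\cR_N,\Omega_0,c_2N^\gamma)\geq \eta_2$. In case (ii), where $D=[-1/2,1/2]^d$ and $\Omega_0=\mathbb{Z}^d$, Theorem \ref{th_1} is inapplicable because $\delta_{D^\circ}(\mathbb{Z}^d)=1/4$; instead I invoke Theorem \ref{th_2},
\[
\eta_1 \;\leq\; C\,V(\cR_N,\mathbb{Z}^d,M) + C\,\tfrac{c_1N^\gamma}{M},
\]
and set $M:=(4Cc_1/\eta_1)N^\gamma$ so that the error term drops below $\eta_1/2$, yielding $V(\cR_N,\mathbb{Z}^d,c_3N^\gamma)\geq \eta_1/2$. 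In both cases the barrier hypothesis for $\Omega_0$ then forces $\gamma\geq\alpha$, as required.

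The main obstacle is the critical case (ii): because Theorem \ref{th_2} offers only $O(M/K)$ decay rather than the near-exponential rate of Theorem \ref{th_1}, the parameter $M$ cannot be kept fixed in $N$ and must instead be chosen proportional to $N^\gamma$. Crucially, this proportionality retains the same exponent $\gamma$, so the barrier definition, which is insensitive to multiplicative constants, still applies. The remaining housekeeping, namely absorbing expressions of the form $c'N^\gamma+M_*$ into $c''N^\gamma$, is immediate from $N^\gamma\geq 1$ for $N\geq 1$.
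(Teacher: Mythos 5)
Your proof is correct and follows essentially the same route as the paper: use \eqref{eq_V1} to pass from the hypothesis on $\Omega$ to a lower bound on the intrinsic quantity $V(\cR_N,K)$, then use \eqref{eq_V2} (case (i)) or Theorem~\ref{th_2} (case (ii)) to push that bound onto $\Omega_0$ and invoke the barrier hypothesis there. The one small divergence is cosmetic but mildly advantageous: the paper argues by contradiction, fixing an auxiliary exponent $\beta\in(\gamma,\alpha)$ and taking $M=N^\beta$ in Theorem~\ref{th_2} so the error $O(N^{\gamma-\beta})$ vanishes only for $N\gg0$, which forces an extra step to handle small $N$; you instead take $M\propto N^\gamma$, which kills the $O(M/K)$ error term at every $N\geq 1$ and produces a lower bound uniform over all $N$, so the barrier definition applies directly with no contradiction and no auxiliary exponent.
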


\subsubsection*{Concrete reconstruction results}

One important application of our results is the recovery of coefficients corresponding to orthogonal \textit{algebraic polynomials} from nonuniform Fourier samples. Polynomial reconstruction spaces are particularly suitable for recovery of (non-periodic) smooth functions, since these enjoy rapidly convergent approximations. For these reconstruction spaces, the results in \cite{grhr10, AdcockHansenShadrinStabilityFourier} establish stable 
sampling rates as well as a stability barrier in the case of sampling uniformly at critical density. By means of Theorems \ref{th_1}, \ref{th_2}, we extend 
these results to arbitrary sampling sets $\Omega$ and show that these stability conditions are independent of the 
particular geometry of $\Omega$. Such a conclusion does not follow from the methods in 
\cite{AdcockHansenShadrinStabilityFourier}, that rely heavily on a reformulation of Fourier sampling as a polynomial 
interpolation problem, which is only available for uniform sampling at critical density. 

Another important application of our results is the recovery of \textit{wavelet} coefficients from nonuniform Fourier samples. This case is particularly relevant to imaging applications, since images are known to be sparse in wavelets. In medical imaging, for example, it is vital to decrease the number of required measurements, and thanks to the aforementioned sparsity, regularization techniques such as compressive sensing can be instrumental. As argued in \cite{BAACHGSCS}, 
understanding the wavelet-specific
stable sampling rate and the stability barrier is a necessary first step, prior to embarking upon regularization methods. By means of Theorems \ref{th_1} and \ref{th_2}, we extend the results of \cite{AHPWavelet} and \cite{AHKM2DWavelets} to nonuniform sampling, as well as the results of \cite{1DNUGS} to higher dimensions. We show that a linear scaling between $K$ and $2^J$ is both sufficient and necessary for stable recovery of wavelet coefficients up to the wavelet scale $J$, from nonuniform Fourier measurements taken in the ball $B_K$. The practical implementation of such wavelet recovery was described in the recent work \cite{GPPractical}, and the theoretical results obtained here agree with and validate those observed in numerical experiments from \cite{GPPractical}.

\subsection{Technical overview}
A common technique in the computational sampling literature is to derive sampling inequalities by means of oscillation 
estimates (see e.g. \cite{GrochenigIrregular, fegrst95, alfe98, GrochenigTrigonometric, stta06, AldroubiAverageSamp, 
su07,nasuxi13,AGH2DNUGS}). These 
provide effective 
sampling 
bounds, but do not cover the complete range of sub-Nyquist gap densities. In contrast, Theorem \ref{th_samp} covers 
sets 
with density up to the critical value. In the unweighted case, the proof revisits Beurling's balayage techniques
\cite{BeurlingDiffOp}. More precisely, we follow a recent simple and powerful approach due to Olevskii and Ulanovskii 
\cite{OlevskiiUlanovskii} and quantify the main components of their argument. The case of weights is obtained 
afterwards by an argument from \cite{AGH2DNUGS}. 

For the problem of the universality of the stable sampling rate, we need to compare the effect of truncating 
frame expansions associated with different sampling sets. The challenge lies in the redundancy of these 
expansions, because setting some frame coefficients to zero has a spillover effect on the others. Indeed, 
when we identify a signal with its canonical frame coefficients, it turns out that
truncating 
a frame expansion is a Toeplitz-like operation: it sets some coefficients to zero and then projects the result onto 
the space of coefficients that are compatible with the restrictions imposed by redundancy. This perspective has been 
exploited in different contexts in \cite{ro11, ro12, doro14} and we use some technical insights from that work.

\subsection{Organization}
The rest of the article is organized as follows. Section \ref{s:notation} introduces the required definitions and 
notation.  Section \ref{sec_examples} presents the main applications of Theorems \ref{th_samp}, \ref{th_1} and 
\ref{th_2}. In Section \ref{sec_background} we provide some basic background on Fourier sampling. Section 
\ref{sec_real_work} contains our core technical contribution. We develop several estimates on truncation of Fourier 
expansions that are later used in Section \ref{sec_gran_finale} to prove Theorems \ref{th_samp}, \ref{th_1} and 
\ref{th_2}.

\paragraph{Acknowledgment.} The problems that led to this collaboration were proposed and discussed at the research 
cluster ``Computational Challenges in  Sparse and Redundant Representations'' held at Institute for Computational and 
Experimental Research in Mathematics (ICERM), Brown University, November 2014. The authors are very grateful to ICERM 
for its hospitality. B.~A.~acknowledges support from the NSF DMS grant 1318894, NSERC grant 611675 
and an Alfred P. Sloan Research Fellowship. M.~G.~acknowledges support from the
EPSRC Grant EP/N014588/1 for the EPSRC Centre for Mathematical and Statistical Analysis of 
Multimodal Clinical Imaging. J.~L.~R.~gratefully acknowledges support from a Marie 
Curie fellowship, within the 7th.~European Community Framework program, under grant 
PIIF-GA-2012-327063; from the Austrian Science Fund (FWF): P 29462 - N35; and
from the WWTF grant INSIGHT (MA16-053).

\section{Notation} \label{s:notation}
We introduce some definitions and fix the notation. The norm of a function $f \in \cL^2(\bbR^d)$ will be simply denoted 
$\| f \| := \norm{f}_2$. For a subset $D\subseteq\mathbb{R}^d$ we identify $\cL^2(D)$ with the subspace of 
$\cL^2(\bbR^d)$ formed by the functions supported on $D$.

A \emph{convex body} $D\subseteq\mathbb{R}^d$ is a compact convex set with non-empty interior. A convex body is called 
\emph{centered} if $0 \in \mathrm{int}(D)$ and \emph{symmetric} if $D=-D$. For a centered symmetric convex body $D$, 
the 
function $\abs{\cdot}_D:\mathbb{R}^d\rightarrow \mathbb{R}$ defined as 
\begin{align*}
|x|_D=\inf\left\{a>0 : x\in a D\right\}, \quad  x\in\bbR^d,
\end{align*}
is a norm on  $\mathbb{R}^d$. The \emph{polar set} of $D$ is
\begin{align*}
D^\circ=\left\{ z \in \mathbb{\hat{R}}^d : \forall x \in D,\ x\cdot z \leq1 \right\}.
\end{align*}
The Euclidean norm $\abs{\cdot}_2$ is simply denoted as $\abs{\cdot}$. Note that for the Euclidean norm we have 
$\abs{\cdot}=\abs{\cdot}_{B_1}=\abs{\cdot}_{B^{\circ}_1}$ where $B_1$ denotes the unit Euclidean ball.
For two non-negative functions $f,g$, we write $f \lesssim g$, if there exists a constant $C>0$ such that $f \leq C g$, and write $f \asymp g$ if $f \lesssim g$ and $g \lesssim f$.

\vspace{-1em}
\paragraph{Separation, density and bandwidth of sampling points:} Let $\Omega\subseteq\mathbb{\hat{R}}^d$ be a 
closed countable set, which we also refer to as a \textit{sampling set}. Given a norm $\abs{\cdot}_*$ on $\bbR^d$ and 
$\eta>0$, 
$\Omega$ is said to be $\eta$ \textit{separated} (with respect to $\abs{\cdot}_*$) if 
\begin{align*}
\forall \omega, \omega' \in \Omega , \quad \omega \neq \omega', \quad |\omega-\omega'|_* \geq \eta. 
\end{align*}
The set $\Omega$ is separated if it is $\eta$ separated for some $\eta>0$,
and it is \textit{relatively separated} if it is a finite union of 
separated sets. Equivalently, $\Omega$ is relatively separated if its \emph{covering number}
\begin{align}
\label{eq_nomega}
n_\Omega := \sup_{z \in \hat\bbR^d} \# \left( \Omega \cap \left( \{z\} + [0,1]^d \right) \right)
\end{align}
is finite. The \emph{gap} of $\Omega$ (with respect to $\abs{\cdot}_*$) is
\begin{align*}
\delta_*(\Omega)=\sup_{z \in \hat\bbR^d} \inf_{\omega\in \Omega}|\omega- z|_*, 
\end{align*}
and we say that $\Omega$ is \textit{$\delta_*$-dense}. If $\abs{\cdot}_{*} = \abs{\cdot}_{D}$ for a centered symmetric 
convex body $D$, we just write $\delta_{D}(\Omega)$. The number $\delta_{D}(\Omega)$ is the infimum of 
the numbers $\delta>0$ such that $\Omega+\delta D=\Rdst$.

The density condition corresponding to the gap $\delta_{D^{\circ}}(\Omega)=1/4$ is called the critical density
for sampling with spectrum $D$, which, as noted earlier, in one dimension and within 
the uniform setting coincides with the Nyquist sampling rate. 

\vspace{-1em}
\paragraph{Fourier frames:} Let
\bes{
e_{\omega}(x) = \E^{\I 2 \pi \omega \cdot x} \chi_{D}(x), \quad x \in \bbR^d, \ \omega\in\hat\bbR^d,
}
where $\chi_{D}$ is the indicator function of the set $D$. A countable family of functions 
$\{e_\omega\}_{\omega\in\Omega}\subseteq\cL^2(D)$  is said to be a \textit{Fourier frame} for 
$\cL^2(D)$ if there exist constants $A,B>0$ such that \eqref{eq_frame} holds. The constants $A$ and $B$ are called  
\textit{upper and lower frame bounds}, respectively. If \eqref{eq_frame} is replaced by
\bes{
A\|{f}\|^2 \leq \sum_{\omega\in\Omega}\mu_\omega|\hat{f}(\omega)|^2 \leq B \|{f}\|^2, \quad f\in\cL^2(D),
}
where $\mu_\omega > 0$ are some weights, then $\left\{\sqrt{\mu_\omega}e_\omega\right\}_{\omega\in\Omega}$  is called a 
\textit{weighted Fourier frame} for 
$\cL^2(D)$.
In this article, we use \textit{measures of Voronoi regions} as weights, which is a standard 
practice  in nonuniform sampling, see for example \cite{Rasche99, AldroubiGrochenigSIREV}. The Voronoi region at $\omega\in\Omega$, with 
respect to the norm $\abs{\cdot}_*$, is given by
\begin{align*}
V_\omega = \left \{  z \in \hat\bbR^d : \forall \lambda \in \Omega,\ \lambda\neq \omega ,\ |\omega- z|_{*} \leq 
|\lambda- 
z|_{*} \right\}.
\end{align*}
We will always assume that $\Omega$ is countable and closed. Under this assumption,
the Voronoi regions $\{V_\omega: \omega \in \Omega\}$ form an almost disjoint cover of $\hat\bbR^d$, 
i.e., 
$\hat\bbR^d = \bigcup_\omega V_\omega$ and 
$\text{meas}(V_\omega \cap V_{\omega'}) = 0$, if $\omega \not= \omega'$.
The Lebesgue measure of the Voronoi region $V_{\omega}$ is the Voronoi weight $\mu_\omega$:
\begin{align*}
\mu_\omega := \text{meas}\left(V_\omega\right)=\int_{\hat\bbR^d}\chi_{V_\omega}( x)\D  x. 
\end{align*}
Note that if $\Omega$ is separated, then $\mu_\omega \gtrsim 1$. 

We remark that the Voronoi weights associated with a certain set $\Omega$ depend on a choice of norm
for $\hat\bbR^d$. In the 
applications to sampling problems below, there is a distinguished convex body $D$ (called spectrum) and we will assume 
that the Voronoi weights are associated with the norm induced by the corresponding polar set $D^\circ$.

\section{Applications and examples}
\label{sec_examples}
We now present two concrete applications of our main results -- Theorems \ref{th_samp}, \ref{th_1}, and \ref{th_2}.  

\subsection{Reconstruction of polynomial coefficients}
We consider the reconstruction of polynomial coefficients of a compactly supported function by means of 
Fourier measurements. For uniform sampling at critical density, the exact sampling rate was derived in
\cite{grhr10, BAACHAccRecov} and the stability barrier in \cite{AdcockHansenShadrinStabilityFourier}.  We extend these 
results to nonuniform sampling and show that these phenomena are not particular of sampling geometry, but rather a 
feature of the chosen reconstruction space. Specifically, we have the following.
\label{ex_poly}
\begin{proposition}
\label{prop_fourier_poly}
Let $\mathcal{P}_N([-1,1])$ be the space of algebraic polynomials of degree $\leq N$ 
restricted to $[-1,1]$ and let $\Omega \subseteq \hat{\bbR}$ be
a closed countable set such that $\delta_{[-1,1]}(\Omega) < +\infty$.

\begin{itemize}
\item \emph{(Necessary sampling conditions)}.
Let $K_N \asymp N^\gamma$, for some $\gamma>0$. Suppose that 
for some $A>0$, the following stable sampling inequality holds all $N \gg 0$
\begin{align}
\label{eq_samp_pol}
\sum_{w \in \Omega \cap B_{K_N}} \mu_{\omega} \abs{\hat{f}(\omega)}^2 \geq A \norm{f}^2,
\qquad f \in \mathcal{P}_N([-1,1]).
\end{align}
Then $\gamma \geq 2$.

\item \emph{(Sufficient sampling conditions)}.
Suppose that $\delta_{[-1,1]}(\Omega) < 1/4$. Then, there exist $c>0$ and $A>0$ such that
\eqref{eq_samp_pol} holds for $K_N := c N^2$. Moreover, given $\varepsilon>0$,
$c$ and $A$ can be chosen uniformly for all sets $\Omega$ with $\delta_{[-1,1]}(\Omega) \leq \tfrac{1}{4} (1-\varepsilon)$.
\end{itemize}
\end{proposition}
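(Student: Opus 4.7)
The plan is to reduce both parts of the proposition to known uniform-sampling results for polynomials via the transference machinery from Section \ref{sec_contrib}. A preliminary observation is the affine rescaling $y \mapsto y/2$ between $\cL^2([-1,1])$ and $\cL^2([-1/2,1/2])$: it maps $\mathcal{P}_N([-1,1])$ bijectively onto $\mathcal{P}_N([-1/2,1/2])$, and on the frequency side corresponds to $\Omega \leftrightarrow 2\Omega$. A direct check with $D_1 = [-1,1]$ (so $D_1^\circ = [-1,1]$) and $D_2 = [-1/2,1/2]$ (so $D_2^\circ = [-2,2]$) gives $\delta_{D_1^\circ}(\Omega) = \delta_{D_2^\circ}(2\Omega)$, so the critical-density condition $\delta_{D^\circ}(\cdot) < 1/4$ and the $K \asymp N^\alpha$ scaling both transfer cleanly between the two settings.

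For the sufficient direction, I would invoke Corollary \ref{coro_univ_rate} with $\alpha = 2$. The required input is intrinsic residual decay of order $2$ for $\{\mathcal{P}_N([-1,1])\}$, which I would obtain by first establishing residual decay of order $2$ with respect to $\mathbb{Z}$ for $\{\mathcal{P}_N([-1/2,1/2])\}$, then applying Corollary \ref{coro_transfer}(b) (whose hypothesis $D=[-1/2,1/2]$ applies), and finally using the rescaling. Since $\{e^{2\pi i n x}\}_{n\in\mathbb{Z}}$ is an orthonormal basis of $\cL^2([-1/2,1/2])$, Parseval gives $V(\mathcal{P}_N,\mathbb{Z},K) + V_*(\mathcal{P}_N,\mathbb{Z},K) = 1$, so residual decay with respect to $\mathbb{Z}$ is equivalent to the statement that the lower frame bound at truncation $K = cN^2$ tends to $1$ uniformly in $N$ as $c \to \infty$. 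This is a quantitative form of the stable-sampling results in \cite{grhr10, BAACHAccRecov} and, alternatively, can be derived from the Markov inequality $\|P^{(k)}\|_{\cL^2([-1/2,1/2])} \lesssim N^{2k}\|P\|_{\cL^2}$ combined with integration-by-parts estimates on $\hat P$. Corollary \ref{coro_univ_rate} then delivers the sampling bound \eqref{eq_samp_pol} with constants $c,A$ uniform over all $\Omega$ with $\delta_{[-1,1]}(\Omega) \leq \tfrac{1}{4}(1-\varepsilon)$.

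For the necessary direction, I would apply Corollary \ref{coro_barrier}. The stability barrier $\alpha = 2$ for $(\mathbb{Z}, \{\mathcal{P}_N([-1/2,1/2])\})$ is the content of \cite{AdcockHansenShadrinStabilityFourier}; Corollary \ref{coro_barrier}(b) then promotes this barrier to every sampling set $\Omega'$ with $\delta_{B_1}(\Omega') < +\infty$ in the $[-1/2,1/2]$-setting, and the affine rescaling transports it to every $\Omega$ with $\delta_{[-1,1]}(\Omega) < +\infty$ in the $[-1,1]$-setting. Finally, the hypothesized sampling inequality \eqref{eq_samp_pol} at $K_N \asymp N^\gamma$ says $V(\cR_N,\Omega,K_N) \geq A > 0$; by monotonicity of $V(\cR_N,\Omega,\cdot)$ in its truncation radius, this yields $\inf_N V(\cR_N,\Omega,cN^\gamma) \geq A$ once $c$ exceeds the implicit constant in $K_N \lesssim N^\gamma$, and the stability-barrier property then forces $\gamma \geq 2$.

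The main obstacle is the quantitative strengthening in the sufficient direction: one needs uniform-in-$N$ convergence $V_*(\mathcal{P}_N,\mathbb{Z},cN^2) \to 0$ as $c \to \infty$, not merely the single-constant stable-sampling statement appearing in the cited papers. Either this uniform convergence is extracted directly from the proofs of \cite{grhr10, BAACHAccRecov}, or one supplies an independent concentration estimate for $\hat P$ using Markov--Bernstein inequalities together with Nikolskii-type bounds $|P(\pm 1/2)| \lesssim \sqrt{N}\|P\|_{\cL^2}$ to control boundary contributions in repeated integration by parts. Once residual decay is in hand, the rest of the argument is routine assembly of Corollaries \ref{coro_univ_rate}, \ref{coro_transfer}(b), and \ref{coro_barrier} together with the rescaling.
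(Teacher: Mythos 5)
Your approach matches the paper's: both parts reduce to the uniform-sampling results via the transference corollaries, using exactly the rescaling from $([-1,1],\tfrac12\mathbb{Z})$ to $([-1/2,1/2],\mathbb{Z})$ that the paper invokes ("after rescaling the problem by a factor of $2$"), followed by Corollaries \ref{coro_transfer}, \ref{coro_univ_rate} for sufficiency and Corollary \ref{coro_barrier} for necessity. The ``obstacle'' you flag in the sufficient direction is not an actual gap: the estimates in \cite{grhr10, BAACHAccRecov} already give the uniform-in-$N$ residual decay (arbitrarily small $\theta$ for large enough $c_\theta$) that the definition of residual decay of order $2$ requires, so the supplementary Markov--Bernstein argument you sketch is a viable self-contained alternative but not needed.
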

\begin{proof}
The main result of \cite{AdcockHansenShadrinStabilityFourier} shows that $2$ is a stability barrier for the
sampling problem associated with $\tfrac{1}{2}\Zst$. Therefore, the necessity part follows from Corollary
\ref{coro_barrier} (after rescaling the problem by a factor of 2).

For the sufficiency, we use the fact that $\mathcal{P}_N$ has residual decay of order $2$ with respect to the sampling 
set $\tfrac{1}{2}\Zst$ \cite{grhr10, BAACHAccRecov}. By Corollary \ref{coro_transfer} we conclude that $\mathcal{P}_N$ 
has residual decay of order $2$ and therefore Corollary \ref{coro_univ_rate} gives the desired sampling bounds.
\end{proof}
\begin{remark}
The necessity part of Proposition \ref{prop_fourier_poly} does not follow directly from the methods in 
\cite{AdcockHansenShadrinStabilityFourier}, since these rely essentially on a reformulation of the Fourier sampling 
problem as a polynomial interpolation problem, and such a reformulation is only valid for uniform sampling at the 
critical rate.

Although we managed to extend the stability barrier to the irregular sampling setting, and beyond the Nyquist rate, 
the methods in this article do not allow us to recover the fine behavior of the condition number near the critical 
density. Such a description is however available for uniform sampling at the Nyquist rate \cite{grhr10, 
BAACHAccRecov, AdcockHansenShadrinStabilityFourier}.
\end{remark}

\subsection{Reconstruction of wavelet coefficients}
 
The results of \cite{AHPWavelet} show the following: $(a)$ the stability barrier for recovery of coefficients with respect 
to compactly supported wavelets from uniform Fourier samples acquired on $\bbZ$ is equal to $1$, and $(b)$ the space of 
compactly supported wavelets has residual decay of order $1$ with respect to the sampling set $\bbZ$, that is, the 
stable sampling rate is linear. Thus, in the same manner as above, by using Corollaries \ref{coro_transfer}, 
\ref{coro_univ_rate}
and \ref{coro_barrier}, we extend these stability conditions $(a)$ and $(b)$  to 
Fourier samples taken on a nonuniform sampling set $\Omega$. The stability barrier in \cite{AHPWavelet} uses the 
following lemma, that we shall exploit again.
\begin{lemma}
\label{lemma_con}
Let $\varepsilon \in (0,1)$. Then there exist constants $c_\varepsilon, C_\varepsilon >0$
such that for $n \in \mathbb{N}$, there exist a trigonometric polynomial
$m(\xi) = \sum_{k=0}^n c_k \E^{-\I 2\pi k \xi}$ such that $\norm{m}^2_{\cL^2([-1/2,1/2])}=1$
and $\norm{m}^2_{\cL^2([-\varepsilon,\varepsilon])} \leq C_\varepsilon e^{-c_\varepsilon n}$.
\end{lemma}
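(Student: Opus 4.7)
The plan is to give an explicit construction. Set
\begin{align*}
m(\xi) = c_n \left(1 - \E^{-\I 2 \pi \xi}\right)^n = c_n \sum_{k=0}^n \binom{n}{k}(-1)^k \E^{-\I 2 \pi k \xi},
\end{align*}
which is of the required form by the binomial theorem, and choose $c_n > 0$ to normalize so that $\|m\|_{\cL^2([-1/2,1/2])} = 1$. The key observation is that $|1 - \E^{-\I 2 \pi \xi}|^2 = 4 \sin^2(\pi \xi)$ vanishes to order $2n$ at the origin while being maximal at $\xi = \pm 1/2$, so the mass of $|m|^2$ gets pushed away from $\xi = 0$ as $n$ grows.

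First I would compute the normalization. Using $|1 - \E^{-\I 2 \pi \xi}|^{2n} = 4^n \sin^{2n}(\pi \xi)$ together with the Wallis identity $\int_0^{\pi/2} \sin^{2n}(u)\, du = \tfrac{\pi}{2}\, (2n)!/(4^n(n!)^2)$, one gets
\begin{align*}
\int_{-1/2}^{1/2} \left|1 - \E^{-\I 2 \pi \xi}\right|^{2n} d\xi = \binom{2n}{n}.
\end{align*}
Setting $c_n^2 = 1/\binom{2n}{n}$ achieves the normalization, and Stirling's formula gives $c_n^2 \leq C \sqrt{n}\,4^{-n}$ for an absolute constant $C$.

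Next I would bound the $\cL^2$-mass on the small interval. For $\varepsilon \in (0,1/2)$, since $\sin(\pi|\xi|)$ is increasing on $[0,1/2]$,
\begin{align*}
\|m\|^2_{\cL^2([-\varepsilon, \varepsilon])} \leq c_n^2 \cdot 4^n \cdot 2 \varepsilon \sin^{2n}(\pi \varepsilon) \leq 2 C \varepsilon \sqrt{n} \left(\sin(\pi \varepsilon)\right)^{2n}.
\end{align*}
Setting $a_\varepsilon := -2 \log \sin(\pi \varepsilon) > 0$, this equals $2 C \varepsilon \sqrt{n}\, \E^{-a_\varepsilon n}$. The polynomial factor $\sqrt{n}$ is absorbed into the exponential at the cost of halving the rate, yielding $\|m\|^2_{\cL^2([-\varepsilon, \varepsilon])} \leq C_\varepsilon \E^{-c_\varepsilon n}$ with $c_\varepsilon := a_\varepsilon/2$ and a suitable constant $C_\varepsilon$ depending only on $\varepsilon$.

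The main obstacle is just the range of $\varepsilon$: the construction gives exponential decay only when $\sin(\pi \varepsilon) < 1$, i.e.\ for $\varepsilon \in (0,1/2)$. Since $m$ is $1$-periodic, this is the only regime in which the conclusion could be nontrivial (for $\varepsilon \geq 1/2$ the interval $[-\varepsilon, \varepsilon]$ already contains a full period), so the construction covers the intended application. Technically, the proof is essentially a quantitative version of the statement that $m$ has a zero of order $n$ at $\xi=0$ and is hence exponentially flat there when compared to its global $\cL^2$-mass.
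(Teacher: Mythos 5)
Your construction is correct, and since the paper does not actually prove Lemma~4.1 but defers to Erd\'elyi's Chebyshev-polynomial construction \cite{MR1171553}, your argument is a genuinely different, more elementary and self-contained route. Taking $m(\xi)=c_n(1-\E^{-\I 2\pi\xi})^n$ and normalizing via the Wallis identity to get $\int_{-1/2}^{1/2}|1-\E^{-\I 2\pi\xi}|^{2n}\,\D\xi = \binom{2n}{n}$ is a clean way to produce the required exponential flatness near the origin; the Stirling estimate $c_n^2\lesssim\sqrt{n}\,4^{-n}$ then does the rest, and absorbing the $\sqrt{n}$ factor by halving the rate is fine. The Chebyshev construction would give a sharper exponential rate $c_\varepsilon$ (since Chebyshev polynomials are extremal for this type of Remez inequality), but the lemma does not demand optimality, so your bound is fully sufficient for the downstream application in Proposition~3.3, where one only needs a nonzero rate for small $\varepsilon$. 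One remark worth surfacing explicitly: your observation about the range of $\varepsilon$ is not merely an admission of incompleteness but actually uncovers that the lemma as stated is overreaching. Since any $m$ of the prescribed form is $1$-periodic, for $\varepsilon\ge 1/2$ the interval $[-\varepsilon,\varepsilon]$ contains a full period and therefore $\norm{m}^2_{\cL^2([-\varepsilon,\varepsilon])}\ge\norm{m}^2_{\cL^2([-1/2,1/2])}=1$, which cannot be $\le C_\varepsilon\E^{-c_\varepsilon n}$ for large $n$. So the hypothesis should read $\varepsilon\in(0,1/2)$ rather than $\varepsilon\in(0,1)$; your construction covers exactly the correct (and the only possible) range, and this is also the only range used in the paper.
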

See \cite{MR1171553} for a construction of $m$ in terms of Chebyshev polynomials.

We now formulate precisely necessary and sufficient sampling conditions for recovery of two-dimensional boundary-corrected Daubechies wavelets from nonuniform Fourier samples. For the sufficiency part, we use the main result from
\cite{AHKM2DWavelets}.

First, we define the corresponding wavelet subspace in $\cL^2(D)$, $D=[-1/2,1/2]^2$, by following \cite{cohen1993wavelets} (see also \cite{AHKM2DWavelets,GPPractical}).  Let $\phi$ be a compactly supported Daubechies scaling function with $p$ vanishing moments and $\text{supp}(\phi)=[-p+1,p]$. For any $j\in\bbN$, $j\geq 1+\log p$, let
\bes{
\phi^{\text{b}}_{j,n} = \left \{ \begin{array}{ll} 2^{j/2} \phi (2^j \cdot - n), & -2^{j-1} + p \leq n < 2^{j-1} -p
\\ 
2^{j/2} \phi^{\text{left}}_{n} (2^j \cdot), & -2^{j-1}  \leq n < -2^{j-1}+p 
\\
2^{j/2} \phi^{\text{right}}_{2^j-n-1}(2^j(\cdot-1)), & 2^{j-1} -p \leq n < 2^{j-1},
\end{array} \right .
}
where $\phi^{\text{left}}_{n}$ and $\phi^{\text{right}}_{n}$ are left and right boundary-corrected scaling functions as 
defined in \cite{cohen1993wavelets}. Similarly, let $\psi^{\text{b}}_{j,n}$ denote the boundary-corrected wavelet 
function 
on $[-1/2,1/2]$. Let the two-dimensional scaling function be defined by tensor product as $\phi^{\text{b}}_{j,(n,m)} = 
\phi^{\text{b}}_{j,n}\otimes \phi^{\text{b}}_{j,m}$ and the wavelet function as
\bes{
\psi^{\text{b},k}_{j,(n,m)} = \left \{ \begin{array}{ll} \phi^{\text{b}}_{j,n}\otimes \psi^{\text{b}}_{j,m},  & k=1, 
\\ 
\psi^{\text{b}}_{j,n}\otimes \phi^{\text{b}}_{j,m}, & k=2,
\\
\psi^{\text{b}}_{j,n}\otimes \psi^{\text{b}}_{j,m}, & k=3,
\end{array} \right .
}
where $-2^{j-1}\leq n,m \leq 2^{j-1}-1$. We fix an integer $J_0\geq 1+\log p$ (base scale) and let
\eas{
\Phi_{J_0} &= \left\{ \phi^{\text{b}}_{j,(n,m)} : -2^{J_0-1}\leq n,m \leq 2^{J_0-1}-1 \right\}, \\
\Psi_{j} &= \left\{ \psi^{\text{b},k}_{j,(n,m)} : -2^{j-1} \leq n,m \leq 2^{j-1}-1, \ k=1,2,3 \right\},\ j\in\bbN,\ 
j\geq J_0.
}
The set 
\bes{
\cW := \Phi_{J_0} \cup \Big( \bigcup_{J_0\leq j}  \Psi_{j} \Big) 
}
forms an orthonormal basis for $\cL^2(D)$, $D=[-1/2,1/2]^2$, cf. \cite{cohen1993wavelets}. We consider the 
finite-dimensional subspace 
\be{\label{wave_space}
\cW_{N_J} = \text{span} \left\{ \varphi_n : \varphi_n\in \Phi_{J_0} \cup \Big( \bigcup_{J_0\leq j\leq J -1}  \Psi_{j} 
\Big) , \ n=1,\ldots,N_J \right\}
}
spanned by the $N_J = 2^{J}\times 2^J$ wavelets, up to the finest scale $J>J_0$. Since the boundary corrected 
Daubechies wavelets are associated with a multiresolution analysis \cite{cohen1993wavelets}, we also have $\cW_{N_J} = 
\text{span} (\Phi_{J})$.

We can now formulate the stable sampling result.

\begin{proposition}
\label{prop_fourier_wave} Let $D=[-1/2,1/2]^2$ and let $\Omega \subseteq \hat{\bbR}^2$ be a closed 
countable set.
Let $\cW_{N_J}$ be the space of boundary corrected Daubechies wavelets up to scale $J$ defined on 
$D$, as in $\R{wave_space}$. 

\begin{itemize}
\item \emph{(Necessary sampling conditions)}.
Let $K_J \asymp 2^{\gamma J}$, for some $\gamma>0$. Suppose that 
for some $A>0$, the following stable sampling inequality holds all $J \gg 0$
\begin{align}
\label{eq_samp_wav}
\sum_{w \in \Omega \cap B_{K_J}} \mu_{\omega} \abs{\hat{f}(\omega)}^2 \geq A \norm{f}^2,
\qquad f \in \cW_{N_J},
\end{align}
Then $\gamma \geq 1$.

\item \emph{(Sufficient sampling conditions)}.
Suppose that $\delta_{D^{\circ}}(\Omega) < 1/4$. Then, there exist $c>0$ and $A>0$ such 
that \eqref{eq_samp_wav} holds for $K_J := c 2^J$. Moreover, given $\varepsilon>0$,
$c$ and $A$ can be chosen uniformly for all sets $\Omega$ with $\delta_{D^{\circ}}(\Omega) \leq \tfrac{1}{4} 
(1-\varepsilon)$.
\end{itemize}
\end{proposition}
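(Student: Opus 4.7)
The plan follows the strategy of Proposition \ref{prop_fourier_poly}: establish a residual decay estimate and a stability barrier for $\sett{\cW_{N_J}}$ with respect to the critical uniform set $\Zst^2$, and then transfer both to arbitrary $\Omega$ via Corollaries \ref{coro_transfer}(b), \ref{coro_univ_rate} and \ref{coro_barrier}.

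\emph{Sufficient conditions.} The main theorem of \cite{AHKM2DWavelets} shows that $\sett{\cW_{N_J}}$ has residual decay of order $1/2$ in the $N$-indexing $N_J = 4^J$ with respect to $\Zst^2$ (equivalently, $V_*(\cW_{N_J}, \Zst^2, c_\theta 2^J) \leq \theta$ for a suitable $c_\theta$). Because $D=[-1/2,1/2]^2$ and $\Omega_0 = \Zst^2$, Corollary \ref{coro_transfer}(b) upgrades this to intrinsic residual decay of order $1/2$, and Corollary \ref{coro_univ_rate} applied with $\alpha = 1/2$ yields the stable sampling inequality \eqref{eq_samp_wav} with $K_J = c 2^J$, uniformly for every $\Omega$ with $\delta_{D^\circ}(\Omega) \leq (1-\varepsilon)/4$.

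\emph{Necessary conditions.} By Corollary \ref{coro_barrier} (second bullet, since $\delta_{D^\circ}(\Zst^2)=1/2>1/4$), it suffices to rule out scalings $K_J \asymp 2^{\gamma J}$ with $\gamma<1$ on $\Omega_0=\Zst^2$, that is, to construct for every $J$ a unit-norm $F_J \in \cW_{N_J}$ with $V(F_J, \Zst^2, K_J) \to 0$. Fix $\varepsilon \in (0,1)$ and choose $n = n(J) \asymp 2^J$; Lemma \ref{lemma_con} supplies a trigonometric polynomial $m_J(\xi) = \sum_{k=0}^n c_k^J \E^{-\I 2\pi k \xi}$ with $\norm{m_J}_{\cL^2([-1/2,1/2])}^2 = 1$ and $\norm{m_J}_{\cL^2([-\varepsilon,\varepsilon])}^2 \leq C_\varepsilon \E^{-c_\varepsilon n}$. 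After a shift $k \mapsto k - k_0$ placing the coefficients in the interior index range $-2^{J-1}+p \leq k < 2^{J-1}-p$, set
\bes{ f_J(x) = \sum_k c_{k-k_0}^J \, \phi^{\text{b}}_{J,k}(x), \qquad F_J(x_1,x_2) = f_J(x_1) f_J(x_2) \in \cW_{N_J}. }
Orthonormality of the interior scaling functions together with Parseval for $m_J$ on the torus gives $\norm{F_J}=1$; moreover $\phi^{\text{b}}_{J,k}(x) = 2^{J/2}\phi(2^J x - k)$ for such $k$, so that $\hat f_J(\xi) = 2^{-J/2}\E^{-\I 2\pi k_0 \xi/2^J}\hat\phi(\xi/2^J) m_J(\xi/2^J)$ up to a phase.

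To convert the exponential concentration of $m_J$ near the origin into a bound on the discrete sum over $\Zst^2$, I would apply Theorem \ref{th_1} (applicable since $\delta_{B_1}(\Zst^2) < \infty$) with $M = K_J$:
\bes{ V(\cW_{N_J}, \Zst^2, K_J) \leq C \int_{|\xi| \leq 2K_J} |\hat F_J(\xi)|^2 \, \D\xi + C \E^{-c K_J^\alpha}. }
The integral is at most the square of $\int_{|\eta| \leq 2K_J/2^J} |\hat\phi(\eta)|^2 |m_J(\eta)|^2 \, \D\eta$ after the change of variables $\eta=\xi/2^J$, and for $\gamma < 1$ and $J$ large we have $2K_J/2^J < \varepsilon$, so this is bounded by $\norm{\hat\phi}_\infty^4 C_\varepsilon^2 \E^{-2 c_\varepsilon n}$. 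With $n \asymp 2^J$ both contributions in the display vanish as $J \to \infty$, so $\gamma\geq 1$ is forced, completing the necessity. The main obstacle I anticipate is a bookkeeping one: balancing the degree $n \asymp 2^J$ (needed to exploit Lemma \ref{lemma_con}) against the requirement that the shifted support of $\{c_k^J\}$ lie strictly in the interior range, so that each $\phi^{\text{b}}_{J,k}$ is a clean dilate of $\phi$ and the Fourier factorization used above is valid. Both constraints are simultaneously met once $n$ is taken as a small constant fraction of $2^{J-1}$, after which the tensor product lifts the one-dimensional calculation to two dimensions without further complication.
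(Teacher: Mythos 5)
Your proposal is correct and follows essentially the same route as the paper: for sufficiency, invoke the \cite{AHKM2DWavelets} bound at $\Zst^2$, pass to the residual via Parseval, and transfer by Corollaries \ref{coro_transfer} and \ref{coro_univ_rate}; for necessity, reduce to $\Zst^2$ by Corollary \ref{coro_barrier}, build a tensor-product test function from interior scaling functions with coefficients supplied by Lemma \ref{lemma_con}, and pass from the continuous concentration $V(\cW_{N_J},\cdot)$ to the discrete $V(\cW_{N_J},\Zst^2,\cdot)$ via Theorem \ref{th_1}. The only cosmetic differences are that you index the residual decay as order $1/2$ against $N_J=4^J$ (where the paper, slightly loosely, says order $1$), and you write the tensor-product witness $F_J=f_J\otimes f_J$ explicitly rather than revealing the tensor structure of the coefficient array at the end; both lead to the same cutoff $K_J\asymp 2^J$.
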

\begin{proof}
Let us prove the necessity. We want to show that $1$ is a stability barrier for the sampling 
problem associated with $\Omega$ and $\cW_{N_J}, J \gg 0$. By Corollary \ref{coro_barrier}, it 
suffices to consider one specific sampling set; we assume that $\Omega=\mathbb{Z}^2$. We prove that
$1$ is a stability barrier by showing the contrapositive of the condition in the definition;
let $\gamma \in (0,1)$, and let us show that, for all $c>0$, 
$V(\cW_{N_J}, \mathbb{Z}^2, c 2^{\gamma J})\longrightarrow 0$,
as $J \longrightarrow +\infty$.

Let
\[H_p := \sett{(k_1, k_2) \in \mathbb{Z}^2: -2^{J-1} + p \leq k_i  < 2^{J-1} -p}\]
be the indices corresponding to the interior scaling functions.
Note that $\# H_p \asymp 2^{J}$.
Let
\begin{align*}
f(x) = \sum_{k_1,k_2 \in H_p} a_{k_1,k_2} 2^{J} \phi(2^J x_1-k_1)\phi(2^J x_2-k_2),
\end{align*}
with $\norm{a}_2=1$. Then $f \in \cW_{N_J}$.
We now estimate
\begin{align*}
V(\cW_{N_J}, K) &\leq \int_{\abs{\xi} \leq K} \abs{\hat{f}(\xi)}^2 d\xi
\leq
\int_{\abs{\xi}\leq K}
\abs{\sum_{k_1,k_2 \in H_p} a_{k_1,k_2} 2^{-J} 
\E^{-\I 2\pi 2^{-J} k \xi}
\hat{\phi}(2^{-J}\xi_1)\hat{\phi}(2^{-J}\xi_2)}^2 
d\xi 
\\
&\lesssim \int_{\abs{\xi}\leq K} 
\abs{\sum_{k_1,k_2 \in H_p} a_{k_1,k_2} 2^{-J} 
\E^{-\I 2\pi 2^{-J} k \xi}}^2 d\xi
=\int_{\abs{\xi}\leq 2^{-J}K} 
\abs{\sum_{k_1,k_2 \in H_p} a_{k_1,k_2} \E^{-\I 2\pi k \xi}}^2
d\xi.
\end{align*}
Now we let $K:= c 2^{\gamma J}$ with $c>0$. Using that $\gamma < 1$ and
that $\# H_p \asymp 2^{J}$, and letting $a$ be the tensor product of adequate shifts of the 
sequences provided
by Lemma \ref{lemma_con}, we find that $V(\cW_{N_J}, c 2^{\gamma J}) \longrightarrow 0$, for every $c>0$.
Using Theorem \ref{th_1}, we conclude that $V(\cW_{N_J}, \mathbb{Z}^2, c 2^{\gamma J})\longrightarrow 0$, for every 
$c>0$, as desired.

For the sufficiency, we invoke \cite[Theorem 4.3]{AHKM2DWavelets} that 
gives the following stable sampling estimate with respect to $\bbZ^2$:
for every $\theta \in (0,1)$, there exists $c_{\theta}>0$ such that
$V(\cW_{N_J},\bbZ^2,c_{\theta}2^J) \geq 1-\theta$. By Parseval's identity:
$V_*(\cW_{N_J},\bbZ^2,c_{\theta}2^J) = 1 -V(\cW_{N_J},\bbZ^2,c_{\theta}2^J)$ and therefore we conclude that
that $\{\cW_{N_J}: J\geq 1\}$ has residual decay of order $1$ with respect 
to the sampling set $\bbZ^2$. We now invoke Corollaries \ref{coro_transfer} and  \ref{coro_univ_rate}, 
and the conclusion follows.
\end{proof}

\section{Sampling background}
\label{sec_background}
We now collect some background and auxiliary results on nonuniform sampling.

\subsection{Balayage of delta measures}
The following was proved by Beurling \cite{BeurlingDiffOp} in a slightly weaker form, who mentioned without proof 
a possible refinement. The stronger version is proved by Olevskii and Ulanovskii in \cite[Theorem 
4.1]{OlevskiiUlanovskii}; see also 
the work of Benedetto and Wu \cite{BenedettoSpiral}.

\begin{proposition}
\label{prop_linf}
Let $D \subseteq \bbR^d$ be a centered symmetric convex body and let
$\Omega \subseteq \hat{\bbR}^d$ be such that 
$\delta_{D^\circ}(\Omega)<1/4$. Then for every distribution $f$ with support in $D$:
\begin{align*}
\cos\left(2\pi \delta_{D^\circ}(\Omega)\right)
\norm{\hat{f}}_\infty \leq \sup_{\omega\in\Omega}|\hat f(\omega)|,
\end{align*}
where $\nm{\cdot}_{\infty}$ denotes the $\cL^\infty$ norm.
\end{proposition}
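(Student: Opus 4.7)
My plan is to reduce the inequality to a balayage/representation problem on $\Omega$. Fix $\xi \in \hat{\bbR}^d$ arbitrarily; it suffices to prove $\cos(2\pi\delta)|\hat f(\xi)| \leq \sup_{\omega}|\hat f(\omega)|$ for $\delta := \delta_{D^\circ}(\Omega)$. By the definition of the gap and the closedness of $\Omega$, there is $\omega_* \in \Omega$ with $|\omega_* - \xi|_{D^\circ} \leq \delta < 1/4$. The geometric fact driving the sharp constant $\cos(2\pi\delta)$ is that, by polar-norm duality $|u\cdot x|\leq |u|_{D^\circ}|x|_D$, for any $\omega$ with $|\omega-\xi|_{D^\circ}\leq \delta$ and any $x \in D$ we have $|2\pi(\omega-\xi)\cdot x| \leq 2\pi\delta < \pi/2$, hence
$$\mathrm{Re}\bigl(e^{2\pi i (\xi-\omega)\cdot x}\bigr) \geq \cos(2\pi\delta) > 0.$$

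By Hahn--Banach duality, the sought inequality is equivalent to the existence, for every $\xi$, of a discrete measure $\nu_\xi = \sum_\omega c_\omega \delta_\omega$ on $\Omega$ with total variation $\|\nu_\xi\| \leq \sec(2\pi\delta)$ satisfying $\hat{\nu}_\xi(x) = e^{-2\pi i \xi \cdot x}$ for all $x \in D$. Indeed, pairing with $f$ then gives $\hat f(\xi) = \sum_\omega c_\omega \hat f(\omega)$, and the triangle inequality yields the claim; for general distributional $f$ supported in $D$ the pairing is justified by a standard mollification/density argument. The entire proof thus reduces to constructing the balayage measure $\nu_\xi$ with the sharp total-variation bound.

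To build $\nu_\xi$ I would follow the iterative Beurling/Olevskii--Ulanovskii strategy: start with $\nu^{(0)}=\delta_{\omega_*}$, form the residual $r_0(x) = e^{-2\pi i \xi \cdot x} - e^{-2\pi i \omega_* \cdot x}$ on $D$, and at each step use the $\mathrm{Re}$-estimate above to peel off at least a $\cos(2\pi\delta)$-fraction of the current residual by adding a suitable delta mass at a nearest point of $\Omega$, while bounding the added variation by the size of the residual. The accumulated mass then telescopes in a geometric series of ratio $1-\cos(2\pi\delta)$ summing to $\sec(2\pi\delta)$.

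The main obstacle, and the subtle point of the argument, is executing the iteration in a norm in which the contraction constant is genuinely $1-\cos(2\pi\delta)$. The naive $L^\infty(D)$ estimate $\|r_0\|_\infty \leq 2\sin(\pi\delta)$ exceeds $1$ as $\delta\to 1/4$ and so does not contract, and the pointwise $\mathrm{Re}$-lower bound on its own is not a norm-type inequality. The fix, which is the technical heart of the proof, is to measure the residual in a suitable dual (Paley--Wiener--type) norm in which the $\mathrm{Re}$-inequality translates into a genuine contraction factor $1-\cos(2\pi\delta)$. An equivalent abstract formulation is to study the operator $T:\ell^1(\Omega)\to C(D)$, $Ta = \sum_\omega a_\omega e_\omega|_D$, and bound the preimage norm of each character $e_\xi|_D$ by $\sec(2\pi\delta)$ using the $\mathrm{Re}$-estimate as the sharp quantitative input.
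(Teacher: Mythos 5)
The paper does not actually prove Proposition \ref{prop_linf}: it is a cited result, attributed in this sharp form to Olevskii and Ulanovskii \cite{OlevskiiUlanovskii}, with the paper explicitly noting that Beurling \cite{BeurlingDiffOp} proved only ``a slightly weaker form'' and ``mentioned without proof a possible refinement.'' So there is no in-paper proof to compare against; I compare against the source the paper points to.

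Your Hahn--Banach reduction and the pointwise estimate $\mathrm{Re}\,e^{2\pi i(\xi-\omega)\cdot x}\geq\cos(2\pi\delta)$ for $x\in D$, $|\omega-\xi|_{D^\circ}\leq\delta$ are both the right kind of ingredient, but there is a genuine gap exactly where you flag one: the balayage functional $\nu_\xi$ is never constructed. Your one-step residual satisfies only $\|e_\xi-e_{\omega_*}\|_{L^\infty(D)}\leq 2\sin(\pi\delta)$, which exceeds $1$ already for $\delta>1/6$, and you do not identify the ``Paley--Wiener-type norm'' in which the iteration should contract nor prove a contraction with factor $1-\cos(2\pi\delta)$. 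This is not a deferrable technicality. Beurling's original balayage argument is essentially the iteration you sketch, and it delivers a constant weaker than $\cos(2\pi\delta)$ — which is precisely why the paper cites Olevskii--Ulanovskii for the sharp form. Their proof is not a duality/balayage proof at all: after a normal-families compactness argument one normalizes to $\|\hat f\|_\infty=|\hat f(0)|=1$, picks $\omega_*\in\Omega$ with $|\omega_*|_{D^\circ}\leq\delta$, and the constant $\cos(2\pi\delta)$ then comes from an extremal inequality for entire functions of exponential type near a point where they attain their supremum, giving $|\hat f(\omega_*)|\geq\cos(2\pi\delta)$ directly. That extremal lemma is the real content; without it (or a proof of your hoped-for contraction, which Beurling's scheme does not furnish at this constant), the proposal does not close.
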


In the setting of Proposition \ref{prop_linf}, the set $\Omega$ also gives rise to a Fourier frame for $\cL^2(D)$. Moreover,
Beurling's linear balayage method provides a way to quantify the corresponding frame bounds \cite{BeurlingDiffOp, 
BeurlingVol2}. We derive such explicit bounds in Corollary \ref{coro_expl}.

\subsection{Bessel bounds}
\label{sec_bessel}
We will use the following Bessel bounds. For proofs see for example \cite{GrochenigIrregular,Young}. 
\begin{lemma}
\label{lemma_bessel}
Let $D \subseteq \Rst^d$ be compact and let $L>0$. If $\Omega \subseteq \hat{\bbR}^d$ is a closed countable set such 
that $\delta_{B_1}(\Omega) \leq L$, then
the following Bessel condition holds
\begin{align}
\label{eq_bessel_1}
\left(\sum_{\omega \in \Omega} \mu_\omega \left|\hat{f}(\omega)\right|^2 \right)^{1/2} \leq C_{d,L} \norm{f},
\qquad
f \in \cL^2(D),
\end{align}
where $C_{d,L}$ depends only on the dimension $d$ and the bound on the gap $L$, and $\sett{\mu_\omega: \omega \in 
\Omega}$ are the Voronoi weights with respect to the Euclidean norm. A similar statement holds for the Voronoi weights 
associated with any other norm, with possibly different constants.

In contrast, if $\Omega\subseteq \hat{\bbR}^d$ is relatively separated, we have
\begin{align}
\label{eq_bessel_2}
\left(\sum_{\omega \in \Omega} \left|\hat{f}(\omega)\right|^2\right)^{1/2} \leq C_D
n_\Omega \norm{f},
\qquad
f \in \cL^2(D),
\end{align}
where $C_D$ only depends on $D$ and $n_\Omega$ is the covering number of $\Omega$, cf. \eqref{eq_nomega}.
\end{lemma}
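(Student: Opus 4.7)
The plan is to exploit the fact that $g := \hat{f}$ belongs to the Paley-Wiener space associated with the compact spectrum $D$, and in particular satisfies a local Plancherel-P\'olya inequality
\begin{align*}
|g(\xi)|^2 \leq C_{D,r} \int_{B(\xi,r)} |g(\eta)|^2 \D\eta, \qquad \xi \in \hat{\bbR}^d,
\end{align*}
for any fixed $r>0$, with $C_{D,r}$ depending only on $D$ and $r$. This is a standard consequence of Paley-Wiener regularity: choose a Schwartz $h$ with $\hat h \equiv 1$ on a neighborhood of $D$, so that $g = g * h$, and apply Cauchy-Schwarz, absorbing the tail of the convolution using the rapid decay of $h$ and $\|g\|_2 = \|f\|_2$ (from Plancherel).

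For \eqref{eq_bessel_2}, since $\Omega$ is relatively separated, write $\Omega = \bigcup_{k=1}^N \Omega_k$ with $N \leq C_d\, n_\Omega$ and each $\Omega_k$ being $1$-separated in the Euclidean metric. The local inequality with $r = 1/2$, combined with pairwise disjointness of $\{B(\omega,1/2)\}_{\omega \in \Omega_k}$, yields $\sum_{\omega \in \Omega_k} |g(\omega)|^2 \leq C_D \|g\|_2^2$; summing via the $\ell^2$-triangle inequality over $k$ produces the stated bound.

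For \eqref{eq_bessel_1}, the new difficulty is that $\Omega$ need not be separated, so a direct sum of pointwise estimates over $\omega \in \Omega$ could diverge. The idea is to exploit the smallness of the Voronoi weights inside clusters. Fix a maximal Euclidean $1$-separated subset $\tilde{\Omega} \subseteq \Omega$ and partition $\Omega = \bigsqcup_{\tilde\omega \in \tilde{\Omega}} \Omega_{\tilde\omega}$ by nearest-neighbor assignment in $\tilde{\Omega}$, so that $|\omega - \tilde\omega| \leq 1$ whenever $\omega \in \Omega_{\tilde\omega}$. Local Plancherel-P\'olya then gives $|g(\omega)|^2 \leq C_D \int_{B(\tilde\omega, 2)} |g|^2$ for every $\omega \in \Omega_{\tilde\omega}$. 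The gap hypothesis $\delta_{B_1}(\Omega) \leq L$ forces $V_\omega \subseteq B(\omega, L) \subseteq B(\tilde\omega, L+1)$, so that $\sum_{\omega \in \Omega_{\tilde\omega}} \mu_\omega \leq \mathrm{meas}(B(\tilde\omega, L+1)) \leq C_{d,L}$. Multiplying the local bound by $\mu_\omega$, summing first over $\omega \in \Omega_{\tilde\omega}$ and then over $\tilde\omega \in \tilde{\Omega}$, and using the bounded overlap of $\{B(\tilde\omega,2)\}_{\tilde\omega}$ (a consequence of $1$-separation of $\tilde{\Omega}$), gives $\sum_\omega \mu_\omega |g(\omega)|^2 \leq C_{d,L} \|f\|_2^2$. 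The argument for Voronoi weights defined via a different norm is identical up to constants.

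The main obstacle is precisely this last Voronoi-weighted step: a naive per-cell comparison of $\mu_\omega |g(\omega)|^2$ with $\int_{V_\omega} |g|^2$ via oscillation produces error terms of the form $\mu_\omega L^2 \|f\|_2^2$ whose sum over all of $\Omega$ diverges. The trick of collapsing cluster points onto a maximal $1$-separated subnet, together with the uniform bound $\sum_{\omega \in \Omega_{\tilde\omega}} \mu_\omega \leq C_{d,L}$, is what keeps the final constant depending only on the Euclidean gap $L$ and the dimension $d$.
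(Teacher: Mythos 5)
The paper itself gives no proof of this lemma, only the citation ``see \cite{GrochenigIrregular,Young}'', so there is no internal argument to compare against; I evaluate your proposal on its own.

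The central gap is in the claimed local Plancherel--P\'olya inequality
$|g(\xi)|^2 \leq C_{D,r}\int_{B(\xi,r)}|g|^2$. The argument you sketch for it --- write $g=g*h$ with $h$ Schwartz, split the convolution integral, and apply Cauchy--Schwarz --- does not produce this estimate; it produces
\begin{equation*}
|g(\xi)|^2 \;\leq\; C_{D,r}\int_{B(\xi,r)}|g|^2 \;+\; \varepsilon(r)\,\|g\|_2^2,
\qquad
\varepsilon(r)=\|h\|_{\cL^2(\{|z|\geq r\})}^2,
\end{equation*}
and the second ``tail'' term cannot be dropped or ``absorbed'': it is proportional to the \emph{global} $\cL^2$ norm, which is exactly the quantity you do not have local control over. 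This is fatal to the subsequent summation. In the argument for \eqref{eq_bessel_1}, multiplying by $\mu_\omega$ and summing gives a contribution $\varepsilon(r)\|g\|_2^2\sum_{\omega}\mu_\omega=\infty$, since the Voronoi weights sum to the measure of $\hRdst$. The same problem already appears in your treatment of \eqref{eq_bessel_2}, because each $1$-separated layer $\Omega_k$ is itself infinite, so $\sum_{\omega\in\Omega_k}\varepsilon(r)\|g\|_2^2$ diverges too. The local inequality without an extra term is not a consequence of $g=g*h$, and as far as I can tell it is not true in general: for fixed $r$, the ratio $|g(\xi)|^2/\int_{B(\xi,r)}|g|^2$ can degenerate for Paley--Wiener functions whose $\cL^2$ mass concentrates far from $\xi$ (this is tied to the behavior of the Landau--Pollak--Slepian concentration operator).

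What is genuinely right and worth keeping in your proposal is the structural part: collapsing $\Omega$ onto a maximal $1$-separated subnet $\tilde\Omega$, clustering $\Omega=\bigsqcup_{\tilde\omega}\Omega_{\tilde\omega}$, and, crucially, the observation that $\delta_{B_1}(\Omega)\leq L$ forces $V_\omega\subseteq B(\omega,L)$ and hence $\sum_{\omega\in\Omega_{\tilde\omega}}\mu_\omega\leq\mathrm{meas}\bigl(B(\tilde\omega,L+1)\bigr)=C_{d,L}$. That bound is precisely what makes the constant depend only on $(d,L)$. To close the argument, replace the invalid local inequality by one of the following. Either (a) use a local estimate with enough derivatives, $|g(\omega)|^2\lesssim_{D,r}\int_{B(\omega,r)}\sum_{|\alpha|\leq k}|D^{\alpha}g|^2$ for $k>d/2$ (which follows from Sobolev embedding and is purely local, with no tail), and then sum using bounded overlap of $\{B(\tilde\omega,2)\}$ together with $\|D^{\alpha}\hat f\|_2\lesssim_D\|f\|_2$ for $\supp f\subseteq D$; or (b) avoid pointwise estimates altogether and sum directly: from $g=g*h$ and Cauchy--Schwarz with respect to the measure $|h|\,d\eta$,
\begin{equation*}
\sum_{\omega\in\Omega}\mu_\omega|g(\omega)|^2
\;\leq\;\|h\|_1\int_{\hRdst}|g(\eta)|^2\Bigl(\sum_{\omega\in\Omega}\mu_\omega\,|h(\omega-\eta)|\Bigr)d\eta,
\end{equation*}
and show that the periodization $\Theta(\eta):=\sum_{\omega}\mu_\omega|h(\omega-\eta)|$ is uniformly bounded in terms of $(d,L)$, using again $V_\omega\subseteq B(\omega,L)$ and the decay of $h$. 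Route (b) handles \eqref{eq_bessel_1} and \eqref{eq_bessel_2} at once and gives the cleanest constants.
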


\begin{remark}\label{r:upper_bd} The constants in Lemma \ref{lemma_bessel}
can be described explicitly in a number of situations. For \eqref{eq_bessel_1}, \cite{AGH2DNUGS} 
gives the following estimates that improves on a previous result from \cite{GrochenigIrregular}.
If $D$ is a centered symmetric convex body, and $\Omega \subseteq \hRdst$ has gap $\delta_{D^{\circ}}$, then
\begin{align*}
\left(\sum_{\omega \in \Omega} \mu_\omega \left|\hat{f}(\omega)\right|^2 \right)^{1/2} \leq \exp\left(2\pi \delta_{D^{\circ}} 
c^{\circ} m_{D}\right) \norm{f},
\qquad
f \in \cL^2(D),
\end{align*}
where $\sett{\mu_\omega: \omega \in \Omega}$ are the Voronoi weights with respect to the 
$\abs{\cdot}_{D^{\circ}}$-norm,  $m_{D}=\max_{x\in D}\abs{x}$ and $c^{\circ}$ is the smallest constant such that 
$\abs{\cdot}\leq c^{\circ}\abs{\cdot}_{D^{\circ}}$. 
\end{remark}

\subsection{Weights and subsets} 
The next result allow us to derive weighted sampling inequalities by selecting adequate separated subsets of a given 
sampling set. Other estimates in this spirit can be found in \cite[Thm.~1.1]{AGH2DNUGS}.

\begin{lemma}
\label{lemma_add_weights}
Let $D \subseteq \bbR^d$ be a centered symmetric convex body.
Let $\Omega \subseteq \hRdst$ be a closed countable set with gap $\delta_{D^\circ}(\Omega)$, and let 
$\rho,\eta>0$ be such that $\rho<\eta/2$. 
Let $f:\hRdst \to \Rst$ be a continuous function. Then 
there exists a set 
$\bar\Omega=\bar\Omega (f) \subseteq \hRdst$, which is $(\eta-2\rho)$-separated with respect to the 
$\abs{\cdot}_{D^{\circ}}$-norm, has gap
$\delta_{D^\circ}(\bar\Omega) \leq \delta_{D^\circ}(\Omega)+\eta+\rho$, and is such that for any set $Y \subseteq \hRdst$
\begin{align}
\label{eq_lemma_add_weights}
\sum_{\omega \in \Omega \cap ( Y +2\rho D^{\circ})} \mu_{\omega}\abs{f(\omega)}^2  \geq 
\textnormal{meas}\left(\frac{\rho}{2} D^\circ\right)
\sum_{\omega \in \bar\Omega\cap Y } \abs{f(\omega)}^2,
\end{align}
where $\sett{\mu_\omega: \omega \in \Omega}$ are the Voronoi weights associated with $\Omega$, with respect to the 
$\abs{\cdot}_{D^{\circ}}$-norm.
\end{lemma}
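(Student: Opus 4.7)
The plan is to construct $\bar\Omega$ by anchoring one sample per cell of a maximal $\eta$-separated grid in $\hat{\bbR}^d$, choosing in each cell a point of $\Omega$ at which $|f|$ is smallest. Specifically, use Zorn's lemma to fix a maximal $\eta$-separated subset $Q\subseteq\hat{\bbR}^d$ with respect to $|\cdot|_{D^\circ}$, so that $|q-q'|_{D^\circ}\geq\eta$ for distinct $q,q'\in Q$ and every $z\in\hat{\bbR}^d$ lies within $\eta$ of some $q\in Q$. For each $q\in Q$, if $\Omega\cap(q+\rho D^\circ)\neq\emptyset$ let $\bar\omega_q$ minimize $|f|$ over this intersection; otherwise let $\bar\omega_q\in\Omega$ be any nearest point to $q$, so that $|\bar\omega_q-q|_{D^\circ}\leq\delta_{D^\circ}(\Omega)$ by the gap condition. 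Set $\bar\Omega:=\{\bar\omega_q:q\in Q\}$.

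The separation and gap bounds follow from two-line triangle-inequality estimates. For separation, $|\bar\omega_q-\bar\omega_{q'}|_{D^\circ}\geq|q-q'|_{D^\circ}-2\rho\geq\eta-2\rho$ when both $q,q'$ fall in the main case, and the fallback points need a small separate argument (or exclusion) to preserve this bound globally. For the gap, $|\bar\omega_q-z|_{D^\circ}\leq|\bar\omega_q-q|_{D^\circ}+|q-z|_{D^\circ}\leq\max(\rho,\delta_{D^\circ}(\Omega))+\eta\leq\delta_{D^\circ}(\Omega)+\eta+\rho$ via the nearest $q\in Q$ to $z$.

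The weighted inequality is the technical heart. For each $q\in Q$ with $\bar\omega_q\in Y$, take the witness ball $E_q:=q+(\rho/2)D^\circ$. For $z\in E_q$, the nearest sample $\omega(z)\in\Omega$ satisfies $|\omega(z)-z|_{D^\circ}\leq\delta_{D^\circ}(\Omega)$, so $|\omega(z)-q|_{D^\circ}\leq\delta_{D^\circ}(\Omega)+\rho/2$; in the natural regime $\delta_{D^\circ}(\Omega)\leq\rho/2$ this forces $\omega(z)\in q+\rho D^\circ$. The minimality of $\bar\omega_q$ then gives $|f(\omega(z))|^2\geq|f(\bar\omega_q)|^2$, while $\bar\omega_q,\omega(z)\in q+\rho D^\circ$ with $\bar\omega_q\in Y$ places $\omega(z)\in\bar\omega_q+2\rho D^\circ\subseteq Y+2\rho D^\circ$. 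Integrating yields
\[ \textnormal{meas}\!\left(\tfrac{\rho}{2}D^\circ\right)|f(\bar\omega_q)|^2 \leq \int_{E_q}|f(\omega(z))|^2\,\D z. \]
Because $|q-q'|_{D^\circ}\geq\eta>\rho$ (from $\rho<\eta/2$), the balls $E_q$ are pairwise disjoint, so summing over $q$ with $\bar\omega_q\in Y$ and reassembling via the Voronoi partition identifies the resulting integral as part of $\sum_{\omega\in\Omega\cap(Y+2\rho D^\circ)}\mu_\omega|f(\omega)|^2$, proving the claim.

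The main obstacle is securing the pointwise inclusion $\omega(z)\in q+\rho D^\circ$ for $z\in E_q$, which is what transfers the minimality of $\bar\omega_q$ to arbitrary points in the witness ball. This step is clean in the natural regime $\delta_{D^\circ}(\Omega)\leq\rho/2$ where the lemma is typically applied; outside that regime one would restrict the summation on the right to those $q$ for which the nearest-point map stays inside $q+\rho D^\circ$, using the fallback $\bar\omega_q$ purely for the gap estimate and never in the weighted inequality.
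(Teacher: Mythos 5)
Your overall strategy (a separated net of centers, a minimizing representative per cell, witness balls of radius $\rho/2$, and the Voronoi partition to reassemble the weighted sum) is the right one, but there is a genuine gap at the one place you flag as an "obstacle", and the regime in which your argument works is exactly the opposite of the regime in which the lemma is used. You take the maximal $\eta$-separated net $Q$ inside all of $\hat{\bbR}^d$, so its points need not belong to $\Omega$; this forces you to (a) introduce a fallback case when $\Omega\cap(q+\rho D^\circ)=\emptyset$, which genuinely breaks the $(\eta-2\rho)$-separation of $\bar\Omega$ (a fallback point can be as far as $\delta_{D^\circ}(\Omega)$ from $q$, and $\delta_{D^\circ}(\Omega)$ is not assumed small relative to $\eta$), and (b) assume $\delta_{D^\circ}(\Omega)\leq\rho/2$ to get the nearest-point map from $E_q$ to land in $q+\rho D^\circ$. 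That assumption is not in the hypotheses, and in the actual applications (Proposition \ref{weigt_sum_by_int}, Corollary \ref{coro_expl_2}) one takes $\eta=\tfrac{\varepsilon}{2}\delta_{D^\circ}(\Omega)$ and $\rho=\eta/4$, i.e.\ $\rho\ll\delta_{D^\circ}(\Omega)$, so the "natural regime" never occurs there. Your proposed repair --- dropping the bad cells from the right-hand sum --- is not acceptable either, because the resulting $\bar\Omega$ could have arbitrarily large holes, and the gap bound $\delta_{D^\circ}(\bar\Omega)\leq\delta_{D^\circ}(\Omega)+\eta+\rho$ is precisely what the applications need (they feed $\bar\Omega$ into a below-critical-density sampling theorem).

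The fix is a single change of viewpoint, and it is what the paper does: take the maximal $\eta$-separated set $\tilde\Omega$ \emph{inside $\Omega$} rather than inside $\hat{\bbR}^d$. Maximality within $\Omega$ gives $\Omega\subseteq\tilde\Omega+\eta D^\circ$, hence $\delta_{D^\circ}(\tilde\Omega)\leq\delta_{D^\circ}(\Omega)+\eta$, so the gap control survives. Every cell $\tilde\omega+\rho D^\circ$ contains a point of $\Omega$ (namely $\tilde\omega$ itself), so there is no fallback case, and the chosen representatives $z_{\tilde\omega}\in\tilde\omega+\rho D^\circ$ are automatically $(\eta-2\rho)$-separated. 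Most importantly, since $\tilde\omega\in\Omega$, any $x\in\tilde\omega+\tfrac{\rho}{2}D^\circ$ lying in $V_\omega$ satisfies $|x-\omega|_{D^\circ}\leq|x-\tilde\omega|_{D^\circ}\leq\rho/2$, hence $|\omega-\tilde\omega|_{D^\circ}\leq\rho$ --- with no hypothesis whatsoever on $\delta_{D^\circ}(\Omega)$. This is exactly the inclusion $\bigcup_{\omega\in\Omega\cap(\tilde\omega+\rho D^\circ)}V_\omega\supseteq\tilde\omega+\tfrac{\rho}{2}D^\circ$ that transfers the minimality of the representative and localizes the Voronoi weights, after which your disjointness and reassembly steps go through verbatim. (A minor further remark: the paper lets $z_{\tilde\omega}$ minimize $|f|$ over the whole compact ball $\tilde\omega+\rho D^\circ$ rather than over $\Omega$ intersected with it; both choices work for the inequality, but note that the lemma does not require $\bar\Omega\subseteq\Omega$.)
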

\begin{proof} 
Let $\tilde \Omega$ be a subset of $\Omega$ with separation at least $\eta>0$ with respect to the 
$\abs{\cdot}_{D^{\circ}}$-norm, and maximal with respect to this property. (The existence of such a subset follows from Zorn's lemma.) By maximality, $\Omega \subseteq \tilde \Omega + \eta D^{\circ}$ and therefore $\delta_{D^\circ}(\tilde\Omega)\leq\delta_{D^\circ}(\Omega)+\eta$. Since $\rho<\eta/2$, the sets 
$\tilde\omega+\rho D^{\circ}$, $\tilde\omega\in\tilde\Omega$, are disjoint. Since
$\bigcup_{\tilde\omega\in\tilde\Omega\cap (Y+\rho D^{\circ})} (\tilde\omega+\rho D^{\circ}) \subseteq  Y+2\rho 
D^{\circ}$, we conclude that
\begin{equation}
\label{eq_1} 
\begin{aligned}
\sum_{\tilde \omega\in\tilde\Omega 
\cap (Y+\rho D^{\circ})} \sum_{\omega \in\Omega\cap (\tilde\omega+\rho D^{\circ})} \mu_{\omega} \abs{f(\omega)}^2 &= \sum_{\omega\in\Omega\cap\left(\bigcup_{\tilde\omega\in\tilde\Omega\cap (Y+\rho D^{\circ})} (\tilde\omega+\rho D^{\circ})\right)}  \mu_{\omega} \abs{f(\omega)}^2\\
&\leq \sum_{\omega\in\Omega\cap ( Y +2\rho D^{\circ}) } \mu_{\omega} \abs{f(\omega)}^2.
\end{aligned}
\end{equation}
Let us choose a set $\bar\Omega=\bar\Omega(f)=\{z_{\tilde\omega} : \tilde\omega\in\tilde\Omega \}$, where for each 
$\tilde\omega\in\tilde\Omega$, the point $z_{\tilde\omega}$ is taken form the set $\tilde\omega+\rho D^{\circ}$ such that 
$\abs{f(z_{\tilde\omega})} \leq \abs{f(y)}$ for all $y\in \tilde\omega+\rho D^{\circ}$. By construction, the set 
$\bar\Omega$ is $(\eta-2\rho)$-separated with respect to the $\abs{\cdot}_{D^{\circ}}$-norm and has gap
$\delta_{D^\circ}(\bar\Omega) \leq \delta_{D^\circ}(\tilde\Omega)+\rho
\leq \delta_{D^\circ}(\Omega)+\eta + \rho$. 

We note that for each $\tilde\omega \in \tilde\Omega$, $\bigcup_{\omega \in\Omega\cap (\tilde\omega+\rho D^{\circ})} 
V_{\omega} \supseteq  \tilde\omega+ \frac{\rho}{2} D^\circ$.
(Indeed, if $x \in \tilde\omega+ \frac{\rho}{2} D^\circ$ and
$x \in V_{\omega}$, then, by definition,
$\abs{x-\omega}_{D^{\circ}} \leq \abs{x-\tilde\omega}_{D^{\circ}} \leq \frac{\rho}{2}$
and consequently $\abs{\tilde\omega-\omega}_{D^{\circ}} \leq \rho$.)
As a consequence,
\begin{equation}
\label{eq_2}
\begin{aligned}
\sum_{\tilde \omega\in\tilde\Omega 
\cap (Y+\rho D^{\circ})} \sum_{\omega \in\Omega\cap (\tilde\omega+\rho D^{\circ})} \mu_{\omega} \abs{f(\omega)}^2  &\geq  \sum_{\tilde\omega\in\tilde\Omega \cap (Y+\rho D^{\circ})} \abs{f(z_{\tilde\omega})}^2 \sum_{\omega \in\Omega\cap (\tilde\omega+\rho D^{\circ})} \mu_{\omega} \\
&\geq  \textnormal{meas}\left(\frac{\rho}{2} D^\circ\right)
\sum_{\omega \in \bar\Omega\cap  Y } \abs{f(\omega)}^2.
\end{aligned}
\end{equation}
The desired conclusion follows by combining \eqref{eq_1} and \eqref{eq_2}.
\end{proof}

\section{Main technical estimates}
\label{sec_real_work}
In this section we prove our main technical results on comparing truncations of the Fourier transform and different 
sampling sets. As explained in Section \ref{sec_contrib}, we rely on methods from \cite{AGH2DNUGS, 
OlevskiiUlanovskii,ro11, 
ro12, doro14}.

\subsection{Domination of the residual and the partial sum for nonuniform sampling}
\begin{proposition}\label{prop_residual}
Let $D \subseteq \bbR^d$ be compact and let $L>0$ and
$\alpha \in (0,1)$. Then
there exist constants $c=c_{\alpha,L}, C=C_{\alpha,L} >0$, such that for
all measurable sets $Y \subseteq \hat{\bbR}^d$ and
all closed countable sets $\Omega \subseteq \hat{\bbR}^d$ with $\delta_{B_1}(\Omega) \leq L$:
\begin{align}
\label{eq_aaa}
\left(
\sum_{\omega \in \Omega \cap Y}
\mu_\omega \left|\hat{f}(\omega)\right|^2 
\right)^{1/2}
\leq C
\left(
\int_{Y + B_R} \left|\hat{f}(\xi)\right|^2 d\xi
\right)^{1/2}
+ C e^{-c\abs{R}^\alpha} \norm{f},
\qquad
f \in \cL^2(D),\ R>0.
\end{align}
Here, $\sett{\mu_\omega: \omega \in \Omega}$ are the Voronoi weights with respect to the Euclidean norm.
A similar statement holds for the Voronoi weights associated with any other norm, with possibly different constants.
\end{proposition}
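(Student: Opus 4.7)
My plan is to represent $\hat f$ as a convolution with a well-localised window, split that convolution at radius $R$, and treat the local and tail parts separately. I fix a Gevrey cutoff $\phi\in C_c^\infty(\bbR^d)$ with $\phi\equiv 1$ on $D$, supported slightly larger than $D$, and satisfying $|\hat\phi(\eta)|\leq C_0 e^{-c_0|\eta|^\alpha}$ for constants $c_0,C_0$ depending only on $\alpha$, $d$ and $D$; such a $\phi$ exists for any $\alpha\in(0,1)$ by the classical Denjoy--Carleman construction (this is the sole place the restriction $\alpha<1$ enters, since Paley--Wiener forbids $\alpha=1$). Because $f=\phi f$, I get $\hat f=\hat f*\hat\phi$, which I split as $\hat f(\omega)=I_1(\omega)+E(\omega)$, where $I_1$ is the convolution restricted to $|\omega-\xi|\leq R$ and $E$ is the complementary tail. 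The target is that $I_1$ yields the integral over $Y+B_R$, while $E$ yields the error $Ce^{-c|R|^\alpha}\|f\|$.

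For $I_1$: observe that $\omega\in Y$ together with $|\omega-\xi|\leq R$ forces $\xi\in Y+B_R$. A Cauchy--Schwarz step with $|\hat\phi(\omega-\xi)|$ split symmetrically gives
\begin{equation*}
|I_1(\omega)|^2\leq \|\hat\phi\|_1\int_{|\omega-\xi|\leq R}|\hat\phi(\omega-\xi)|\,|\hat f(\xi)|^2\,d\xi.
\end{equation*}
Summing in $\omega\in\Omega\cap Y$ with Voronoi weights $\mu_\omega$ and exchanging sum and integral reduces the task to bounding $\sum_{\omega\in\Omega}\mu_\omega|\hat\phi(\omega-\xi)|$ uniformly in $\xi$. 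This uniform bound follows from the hypothesis $\delta_{B_1}(\Omega)\leq L$, which forces $V_\omega\subseteq\omega+B_L$; estimating $\mu_\omega|\hat\phi(\omega-\xi)|\leq\int_{V_\omega}\sup_{|z|\leq L}|\hat\phi(\eta-\xi+z)|\,d\eta$ and summing produces a finite, $\xi$-independent constant thanks to the integrable decay of $\hat\phi$.

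For $E$: I set $\tilde h(\eta):=\hat\phi(\eta)\chi_{\{|\eta|>R\}}$ and let $H$ be its inverse Fourier transform. Then $E=\tilde h*\hat f=\widehat{fH}$, the function $fH$ is supported in $D$, and
\begin{equation*}
\|fH\|_2\leq\|H\|_\infty\|f\|\leq\|\tilde h\|_1\|f\|\leq C\int_{|\eta|>R}e^{-c_0|\eta|^\alpha}\,d\eta\,\|f\|\leq C'e^{-c|R|^\alpha}\|f\|,
\end{equation*}
where the Jacobian factor $r^{d-1}$ is absorbed into a slightly smaller exponent $c<c_0$. Lemma~\ref{lemma_bessel} applied to $fH\in\cL^2(D)$ now yields $(\sum_\omega\mu_\omega|E(\omega)|^2)^{1/2}\leq C''e^{-c|R|^\alpha}\|f\|$. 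Combining this with the $I_1$ estimate via Minkowski's inequality gives \eqref{eq_aaa}.

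The principal technical insight is the recognition that the tail of the convolution $\hat f*\hat\phi$ is itself the Fourier transform of the compactly supported function $fH$ with an exponentially small sup-norm multiplier; this reroutes the tail estimate through the Bessel bound of Lemma~\ref{lemma_bessel}, which would otherwise blow up if summed pointwise. The Gevrey construction of $\phi$ is standard but essential for the sub-exponential rate. The extension to Voronoi weights relative to another norm on $\hat\bbR^d$ follows from the equivalence of all norms on $\bbR^d$, which only modifies the constants $c,C$.
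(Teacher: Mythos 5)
Your proof is correct and complete, and it follows a genuinely different technical route from the paper while sharing the same high-level idea (a Gevrey cutoff $\phi$ with $\phi\equiv1$ on $D$ and sub-exponentially decaying $\hat\phi$, followed by a split at radius $R$). The paper splits the \emph{function} $\hat f$ as $\hat f\chi_{Y+B_R}+\hat f\chi_{(Y+B_R)^c}$, introduces the Voronoi-cell step functions $S\hat f$ and $S^R\hat f$, and then bounds the tail $(S-S^R)\hat f$ as an integral operator via a Schur test, with separate estimates for the row and column kernel sums and a preliminary reduction to the case $R>L$. You instead split the \emph{convolution integral} for $\hat f(\omega)=(\hat f*\hat\phi)(\omega)$ at radius $R$ around $\omega$, and your treatment of the two halves differs from the paper at both points: the near part is handled by Cauchy--Schwarz against the measure $|\hat\phi(\omega-\xi)|\,d\xi$ together with the uniform bound $\sup_\xi\sum_\omega\mu_\omega|\hat\phi(\omega-\xi)|<\infty$ (which is essentially an inlined proof of the Bessel bound), while the far part is your key structural observation: the tail convolution $E=\tilde h*\hat f$ with $\tilde h=\hat\phi\,\chi_{\{|\cdot|>R\}}$ is itself $\widehat{fH}$ for the bounded multiplier $H=\check{\tilde h}$, whose $\cL^\infty$ norm is $\leq\|\tilde h\|_1\lesssim e^{-cR^\alpha}$, so the Bessel bound of Lemma~\ref{lemma_bessel} applied to $fH\in\cL^2(D)$ delivers the error term in one stroke. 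This multiplier reformulation replaces the Schur test entirely, makes the $R\leq L$ case unnecessary as a separate step (since $\|\tilde h\|_1\leq\|\hat\phi\|_1$ is already dominated by $Ce^{-cR^\alpha}$ for bounded $R$), and is arguably tighter and more transparent; what the paper's Schur-test formulation buys in exchange is a framework that separates cleanly from the spectral side and that the authors reuse structurally in the companion estimates. One small remark for a polished write-up: you should note explicitly that $\int_{|\eta|>R}e^{-c_0|\eta|^\alpha}d\eta\leq C'e^{-cR^\alpha}$ for \emph{all} $R>0$ with $c<c_0$ (not just $R\gg1$), which is what lets you skip the case distinction; the argument factoring $e^{-c_0r^\alpha}=e^{-cr^\alpha}e^{-(c_0-c)r^\alpha}$ gives this immediately.
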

\begin{proof}
For notational simplicity we only discuss the case of the Euclidean norm; the same arguments apply to arbitrary 
norms.
If $0<R \leq L$, we use the Bessel bounds in Lemma \ref{lemma_bessel} to obtain
\begin{align*}
\sum_{\omega \in \Omega \cap Y}
\mu_\omega \left|\hat{f}(\omega)\right|^2 &\leq 
\sum_{\omega \in \Omega}
\mu_\omega \left|\hat{f}(\omega)\right|^2 \leq C_L \norm{f}^2
= C_L e^{c\abs{R}^\alpha} e^{-c\abs{R}^\alpha} \norm{f}^2
\\
& \leq 
\left[C_L e^{c\abs{L}^\alpha}\right] e^{-c\abs{R}^\alpha} \norm{f}^2.
\end{align*}
Hence, \eqref{eq_aaa} follows.
Let us assume that $R > L$ and
let $\func: \bbR^d \to [0,+\infty)$ be a function such that
$\func \equiv 1$ on $D$, $\supp(\func)$ is compact and
\begin{align}
\label{eq_decay1}
\abs{\hat{\func}(\xi)} \leq C e^{-c \abs{\xi}^\alpha},
\qquad \xi \in \hat{\bbR}^d,
\end{align}
for some constants $C,c$. (The existence of such a function follows from
\cite[Theorem 1.3.5]{ho83-1}. Alternatively, given a function $\func$ with the desired properties,
except possibly the fact that $\func \equiv 1$ on $D$, this latter property can be achieved by
convolving $\func$ with the characteristic function of a big cube.)

Let $\Omega \subseteq \hat{\bbR}^d$ be a set with $\delta_{D^{\circ}}(\Omega) \leq L$ and let $f \in \cL^2(D)$. Let us 
consider
\begin{align*}
&S\hat{f} := \sum_{\omega \in \Omega \cap Y} \hat{f}(w)
\cdot \chi_{V_\omega},
\\
&S^R\hat{f} :=
\sum_{\omega \in \Omega \cap Y}
\left(\hat{f} \cdot \chi_{Y + B_R}
\right) * \hat{\func} (w) \cdot \chi_{V_\omega}.
\end{align*}
Since $\supp(\func)$ is compact, the function
$\left(\hat{f} \cdot \chi_{Y + B_R}
\right) * \hat{\func}$ is bandlimited. Using this fact 
and Lemma \ref{lemma_bessel} we estimate
\begin{align*}
\left(
\sum_{\omega \in \Omega \cap Y}
\left|\hat{f}(\omega)\right|^2 \mu_\omega
\right)^{1/2}
&= \norm{S\hat{f}}_2
\leq \norm{S^R \hat{f}}_2 + \norm{(S-S^R)\hat{f}}_2
\\
&= \left(
\sum_{\omega \in \Omega \cap Y}
\abs{
\left(
\hat{f} \cdot \chi_{Y + B_R} \right)*\hat{\func}
(w)}^2 \mu_\omega
\right)^{1/2}
+ \norm{(S-S^R)\hat{f}}_2
\\
&\lesssim \norm{\left(\hat{f} \cdot \chi_{Y + B_R} \right)*\hat{\func}}_2
+ \norm{(S-S^R)\hat{f}}_2
\\
&\leq \norm{\func}_\infty \norm{\hat{f} \cdot \chi_{Y + B_R}}_2 
+ \norm{(S-S^R)\hat{f}}_2
\\
&\lesssim
\left(
\int_{Y + B_R} \left|\hat{f}(\xi)\right|^2 d\xi
\right)^{1/2}+\norm{(S-S^R)\hat{f}}_2.
\end{align*}
Therefore, it suffices to show that
$\norm{(S-S^R)\hat{f}}_2 \lesssim e^{-c\abs{R}^\alpha} \norm{f}_2$. 
Since $\supp(f) \subseteq D$, and $\func \equiv 1$ on $D$,
it follows that $\hat{f}=\hat{f}*\hat{\func}$. Hence,
we can write
\begin{align*}
(S-S^R)\hat{f}(\xi)&=
\sum_{\omega \in \Omega \cap Y}
\left(\hat{f} * \hat{\func}(w) - 
\left(\hat{f} \cdot \chi_{Y + B_R}
\right) * \hat{\func} (w)\right) \cdot \chi_{V_\omega}(\xi)
\\
&=
\sum_{\omega \in \Omega \cap Y}
\left(\hat{f} \cdot \chi_{\hat{\bbR}^d \setminus (Y + B_R)}
\right) * \hat{\func} (w) \cdot \chi_{V_\omega}(\xi)
\\
&=\int_{\hat{\bbR}^d}
\hat{f}(\eta) K^R(\eta,\xi) d\eta,
\end{align*}
where
\begin{align*}
K^R(\eta,\xi) := 
\sum_{\omega \in \Omega \cap Y}
\chi_{\hat{\bbR}^d \setminus (Y + B_R)}(\eta)
\hat{\func}(\omega-\eta) \cdot \chi_{V_\omega}(\xi).
\end{align*}
Using \eqref{eq_decay1} we obtain the bound
\begin{align}
\label{eq_kernel1}
\abs{K^R(\eta,\xi)} 
\lesssim \sum_{\omega \in \Omega \cap Y}
\chi_{\hat{\bbR}^d \setminus (Y + B_R)}(\eta)
e^{-c \abs{\omega-\eta}^\alpha} \cdot \chi_{V_\omega}(\xi),
\qquad
\eta,\xi \in \hat{\bbR}^d.
\end{align}

We use Schur's lemma to bound the integral operator with kernel $K^R$. Precisely, we use the bound:
\begin{align}
\label{eq_schur}
\norm{(S-S^R) \hat{f}}_2 \leq
\max \left\{\esssup_\eta \int \abs{K^R(\eta,\xi)} d\xi,
\esssup_\xi \int \abs{K^R(\eta,\xi)} d\eta\right\} \norm{f}_2.
\end{align}

{\bf Step 1}. We show that
$\sup_\eta \int \abs{K^R(\eta,\xi)} d\xi \lesssim e^{-c'R^\alpha}$, for some constant $c'>0$. 

Let $\eta \in \hat{\bbR}^d$. By \eqref{eq_kernel1} we obtain
\begin{align*}
\int_{\hat{\bbR}^d} \abs{K^R(\eta,\xi)} d\xi
\lesssim \sum_{\omega \in \Omega \cap Y}
\int_{\hat{\bbR}^d} \chi_{\hat{\bbR}^d \setminus (Y + B_R)}(\eta)
e^{-c \abs{\omega-\eta}^\alpha} \cdot \chi_{V_\omega}(\xi) d\xi.
\end{align*}
Let us bound the integrand in the last expression.
When $\xi \in V_\omega$, $\abs{\xi-\omega} \leq \delta_{D^{\circ}}(\Omega) \leq L$. We can therefore
bound $e^{-c \abs{\omega-\eta}^\alpha} \lesssim
e^{-c \abs{\xi-\eta}^\alpha}$. Let $\omega \in \Omega \cap Y$. We can assume without loss of generality that 
$\eta \notin Y + B_R$. Consequently,
$\abs{\xi-\eta} \geq \abs{\eta-\omega} - \abs{\xi-\omega} \geq R-L$.
Hence,
\begin{align*}
&\int_{\hat{\bbR}^d} \abs{K^R(\eta,\xi)} d\xi
\lesssim \int_{\abs{\xi-\eta}\geq R-L} e^{-c \abs{\xi-\eta}^\alpha}
\sum_{\omega \in \Omega \cap Y} \chi_{V_\omega}(\xi) d\xi
\\
&\qquad \leq
\int_{\abs{\xi-\eta}\geq R-L} e^{-c \abs{\xi-\eta}^\alpha} d\xi
\leq e^{-c'(R-L)^\alpha} \lesssim e^{-c'R^\alpha},
\end{align*}
where $c'$ denotes a new constant.

{\bf Step 2}. We show that
$\sup_\xi \int \abs{K^R(\eta,\xi)} d\eta \lesssim e^{-c'R^\alpha}$, for some constant $c'>0$.

Let $\xi \in \hat{\bbR}^d$. By \eqref{eq_kernel1}, we may assume that
$\xi \in V_\omega$ for a (unique) $\omega \in \Omega \cap Y$, since otherwise
$K^R(\eta,\xi)=0$. Therefore,
\begin{align*}
&\int_{\hat{\bbR}^d} \abs{K^R(\eta,\xi)} d\eta
\leq \int_{\hat{\bbR}^d} e^{-c\abs{\omega-\eta}^\alpha}
\chi_{\hat{\bbR}^d \setminus (Y + B_R)}(\eta) d\eta.
\end{align*}
Since $\omega \in Y$, if $\eta \not\in Y + B_R$, it follows that
$\abs{\eta-\omega} \geq R$. As a consequence,
\begin{align*}
&\int_{\hat{\bbR}^d} \abs{K^R(\eta,\xi)} d\eta
\leq \int_{\abs{\eta-\omega}\geq R} e^{-c\abs{\omega-\eta}^\alpha} d\eta
\lesssim e^{-c'R^\alpha},
\end{align*}
for a new constant $c'$. This completes the proof.
\end{proof}

\subsection{Converse estimates}
We now derive converse estimates for the residual and the partial sum corresponding to nonuniform sampling. The 
following proposition is inspired by the work of Olevskii and Ulanovskii \cite{OlevskiiUlanovskii}
and follows closely their argument.
\begin{proposition}\label{prop_techncial}
Let $D\subseteq \bbR^d$ be a centered symmetric convex body. Let $\Omega\subseteq\hat{\bbR}^d$ be relatively 
separated
and $\varepsilon>0$ such that $4 \delta_{D^{\circ}}(\Omega)<1/(1+\varepsilon)$.
Let $\func \in \cL^1(D)$ such that $\int \func=1$.
Then there exists a constant $C_D>0$ -- that only depends on 
the smallest cube that contains $D$ --
such that for all measurable sets $Y \subseteq \hat{\bbR}^d$
\begin{align*}
\varepsilon^{d} \cos\left(2\pi(1+\varepsilon)  \delta_{D^\circ}(\Omega)\right)^2 \int_Y \left|\hat{f}(\xi)\right|^2 d\xi
&\leq 
\norm{\func}^2  \sum_{\omega \in \Omega \cap (Y+B_R)} \left|\hat{f}(\omega)\right|^2
\\
&+ 
C_D  n_\Omega^2 \norm{f}^2  \int_{\abs{\xi}>\varepsilon R} \left|\hat{\func}(\xi)\right|^2 d\xi,
\qquad
f \in\cL^2(D),\ R \geq 0,
\end{align*}
where 
$
n_\Omega = \sup_{x \in \hat{\bbR}^d} \# \left( \Omega \cap \left([-1/2,1/2]^d+x\right)\right)
$
is the covering number of $\Omega$.
\end{proposition}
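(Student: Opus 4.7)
The proof builds a test function tailored to Proposition \ref{prop_linf} on the enlarged body $(1+\varepsilon)D$. The hypothesis $4\delta_{D^{\circ}}(\Omega) < 1/(1+\varepsilon)$ is exactly $\delta_{((1+\varepsilon)D)^\circ}(\Omega) = (1+\varepsilon)\delta_{D^{\circ}}(\Omega) < 1/4$, so Beurling's $\cL^\infty$-balayage applies to distributions supported in $(1+\varepsilon)D$, with constant $\cos(2\pi(1+\varepsilon)\delta_{D^{\circ}}(\Omega))$ on the left-hand side.

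I would rescale $\func$ to $\func_{\varepsilon}(x) := \varepsilon^{-d}\func(x/\varepsilon)$, so that $\func_\varepsilon$ is supported in $\varepsilon D$, $\int \func_\varepsilon = 1$, and $\widehat{\func_\varepsilon}(\eta) = \hat\func(\varepsilon\eta)$. For each $\xi \in \hat{\bbR}^d$, I would introduce the modulated convolution
\[
g_\xi(x) := \int f(y)\,\func_\varepsilon(x-y)\,e^{\I 2\pi \xi\cdot (x-y)}\,dy,
\]
which is supported in $D+\varepsilon D = (1+\varepsilon)D$ and whose Fourier transform is $\widehat{g_\xi}(\eta) = \hat f(\eta)\,\hat\func(\varepsilon(\eta-\xi))$. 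In particular $\widehat{g_\xi}(\xi) = \hat f(\xi)$ and $\widehat{g_\xi}(\omega) = \hat f(\omega)\,\hat\func(\varepsilon(\omega-\xi))$ for $\omega \in \Omega$. Applying Proposition \ref{prop_linf} to $g_\xi$, evaluating the sup bound at $\eta = \xi$, squaring, and dominating the squared supremum by the sum over $\Omega$ yields the pointwise estimate
\[
\cos^{2}\bigl(2\pi(1+\varepsilon)\delta_{D^{\circ}}(\Omega)\bigr)\,|\hat f(\xi)|^2 \;\le\; \sum_{\omega \in \Omega}|\hat f(\omega)|^2\,|\hat\func(\varepsilon(\omega-\xi))|^2.
\]

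Next I would integrate in $\xi$ over $Y$ and swap sum and integral: the inner integral $\int_Y |\hat\func(\varepsilon(\omega-\xi))|^2\,d\xi$ becomes $\varepsilon^{-d}\int_{\varepsilon(\omega-Y)}|\hat\func|^2$ after the substitution $\eta=\varepsilon(\omega-\xi)$. Splitting the outer sum into $\omega \in \Omega \cap (Y+B_R)$ and $\omega \notin Y+B_R$, the near part is majorised by $\varepsilon^{-d}\|\func\|^2$ via Plancherel, while for the far part the set $\varepsilon(\omega-Y)$ lies entirely outside $B_{\varepsilon R}$ (since $|\omega-\xi|>R$ for every $\xi \in Y$), so the integral is bounded by $\varepsilon^{-d}\int_{|\eta|>\varepsilon R}|\hat\func|^2$. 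The far-tail mass $\sum_{\omega\notin Y+B_R}|\hat f(\omega)|^2 \leq C_D^2\,n_\Omega^2\,\|f\|^2$ is supplied by the Bessel bound \eqref{eq_bessel_2} of Lemma \ref{lemma_bessel}, and multiplying everything by $\varepsilon^d$ assembles the claimed inequality.

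The main conceptual step is pinpointing the correct test function: a plain product $f\func$ does not give the right Fourier-side correspondence, whereas the modulated convolution $g_\xi$ simultaneously (i) has support in $(1+\varepsilon)D$ (required by Proposition \ref{prop_linf}), (ii) satisfies $\widehat{g_\xi}(\xi)=\hat f(\xi)$ (so that $|\hat f(\xi)|^2$ appears upon squaring), and (iii) attaches to each sample $\hat f(\omega)$ precisely the weight $\hat\func(\varepsilon(\omega-\xi))$, whose decay at scale $\varepsilon R$ drives the truncation in the error term. Once this construction is in place, the remaining work reduces to a change of variables and the Bessel estimate.
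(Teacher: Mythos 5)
Your proof is correct and matches the paper's argument almost line for line: the paper likewise sets $h := \varepsilon^{-d}\func(\cdot/\varepsilon)$, defines the auxiliary functions directly by $\widehat{g_\eta}(\xi) := \hat f(\xi)\hat h(\xi-\eta)$ (the Fourier-side description of your modulated convolution), applies Proposition \ref{prop_linf} on $(1+\varepsilon)D$, evaluates at the base frequency, squares, dominates the supremum by the sum, integrates over $Y$, and splits into a near part controlled by Plancherel (giving $\|h\|^2=\varepsilon^{-d}\|\func\|^2$) and a far part controlled by the Bessel bound \eqref{eq_bessel_2} together with the decay of $\hat h$ outside $B_R$. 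No meaningful difference.
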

\begin{proof}
Let $f \in\cL^2(D)$ and let $h(x) := \varepsilon^{-d}\func(x/\varepsilon)$.
For $\eta \in \hat{\bbR}^d$, let $g_\eta$ be defined by
\begin{align*}
\widehat{g_\eta}(\xi) := \hat{f}(\xi)\hat{h}(\xi-\eta).
\end{align*}
Hence, $g_\eta \in\cL^2((1+\varepsilon)D)$. Note that
$\delta_{((1+\varepsilon)D)^\circ}(\Omega)
=(1+\varepsilon) \delta_{D^\circ}(\Omega)$. Indeed, letting $r := (1+\varepsilon)$,
$(rD)^\circ = \tfrac{1}{r} D^\circ$, $\normo{\cdot}_{(rD)^\circ}=r \normo{\cdot}_{D^\circ}$
and consequently,
\begin{align*}
\delta_{(rD)^\circ}(\Omega)=\sup_{x\in\hat\bbR^d} \inf_{\omega\in\Omega} \normo{x-\omega}_{(rD)^\circ}
= r \sup_{x\in\hat\bbR^d} \inf_{\omega\in\Omega} \normo{x-\omega}_{D^\circ}
= r \delta_{D^\circ}(\Omega).
\end{align*}
Therefore, setting
\begin{align*}
K:= \cos(2\pi(1+\varepsilon) \delta_{D^\circ}(\Omega)),
\end{align*}
Proposition
\ref{prop_linf} implies that
\begin{align*}
K \left|\widehat{g_\eta}(\xi)\right| \leq
\sup_{\omega \in \Omega} \left|\hat{f}(\omega)\right|
\left|\hat{h}(\omega-\eta)\right|,
\qquad \xi \in \hat{\bbR}^d.
\end{align*}
In particular, since $\hat{h}(0)=\int h = 1$, 
$\hat{f}(\xi) = \widehat{g_\xi}(\xi)$, we obtain
\begin{align}
\label{eq_pr1}
K \left|\hat{f}(\xi)\right| \leq
\sup_{\omega \in \Omega} \left|\hat{f}(\omega)\right|
\left|\hat{h}(\omega-\xi)\right|,
\qquad \xi \in \hat{\bbR}^d.
\end{align}
Using this and setting $Y_{R} := Y+B_R$ we obtain 
\begin{align*}
&K^2 \int_Y \left|\hat{f}(\xi)\right|^2 d\xi
\leq \int_{Y} \sup_{\omega \in \Omega} \left|\hat{f}(\omega)\right|^2
 \left|\hat{h}(\omega-\xi)\right|^2 d\xi
\\
&\leq
\int_{Y} \sum_{\omega \in \Omega} \left|\hat{f}(\omega)\right|^2 \left|\hat{h}(\omega-\xi)\right|^2 d\xi
\\
&=
\int_{Y} \sum_{\omega \in \Omega \cap Y_{R}} \left|\hat{f}(\omega)\right|^2 \left|\hat{h}(\omega-\xi)\right|^2 d\xi
+
\int_{Y} \sum_{\omega \in \Omega \setminus Y_{R}} \left|\hat{f}(\omega)\right|^2 \left|\hat{h}(\omega-\xi)\right|^2 d\xi
\\
&\leq
\norm{h}^2 \sum_{\omega \in \Omega \cap Y_{R}} \left|\hat{f}(\omega)\right|^2
+ 
\sum_{\omega \in \Omega} \left|\hat{f}(\omega)\right|^2
\sup_{\omega \in \Omega\setminus Y_R} \int_Y \left|\hat{h}(\omega-\xi)\right|^2 d\xi
\\
&\leq
\varepsilon^{-d}  \norm{\func}^2 \sum_{\omega \in \Omega \cap Y_{R}} \left|\hat{f}(\omega)\right|^2
+ 
C_D  n_\Omega^2  \norm{f}^2 \int_{\abs{\xi}>R} \left|\hat{h}(\xi)\right|^2 d\xi
\\
&=
\varepsilon^{-d} \norm{\func}^2 \sum_{\omega \in \Omega \cap Y_{R}} \left|\hat{f}(\omega)\right|^2
+ 
C_D  n_\Omega^2  \norm{f}^2 \int_{\abs{\xi}>R} \left|\hat{\func}(\varepsilon \xi)\right|^2 d\xi
\\
&=
\varepsilon^{-d} \norm{\func}^2 \sum_{\omega \in \Omega \cap Y_{R}} \left|\hat{f}(\omega)\right|^2
+ 
\varepsilon^{-d}  C_D  n_\Omega^2  \norm{f}^2 \int_{\abs{\xi}>\varepsilon R} \left|\hat{\func}(\xi)\right|^2 
d\xi,
\end{align*}
which proves the claim.
\end{proof}
\begin{remark}
\label{rem_non_rel}
As the proof of Proposition \ref{prop_techncial} shows, the requirement that $\Omega$ be relatively separated can be 
dropped if $Y=\Rdst$ and $R \longrightarrow +\infty$.
\end{remark}

As an application of Proposition \ref{prop_techncial} we derive the following residual bound.
\begin{proposition}\label{weigt_sum_by_int}
Let $D\subseteq \bbR^d$ be a centered symmetric convex body. Let $\Omega\subseteq\hat{\bbR}^d$
be a closed countable set and let $\varepsilon>0$ be such that $(1+\varepsilon)\delta_{D^\circ}(\Omega)<1/4$.
Let $\alpha \in (0,1)$. Then for all measurable sets $Y \subseteq \hat{\bbR}^d$
\begin{align}
\label{eq_weigt_sum_by_int}
\int_Y \left|\hat{f}(\xi)\right|^2 d\xi
\leq 
C  \sum_{\omega \in (\Omega \cap (Y+B_R))} \mu_\omega \left|\hat{f}(\omega)\right|^2  
+ 
C e^{-cR^\alpha} \norm{f}^2,
\qquad
f \in\cL^2(D),\ R\geq 0,
\end{align}
where $c,C$ are positive constants that depend only on $D, \alpha$ and $\varepsilon$,
and $\mu_\omega$ are the Voronoi weights associated with the norm induced by $D^\circ$.
\end{proposition}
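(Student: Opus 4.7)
The proposition extends Proposition~\ref{prop_techncial} in two directions: $\Omega$ need not be relatively separated, and the unweighted sum is replaced by a Voronoi-weighted sum. The plan is to extract a relatively separated subset $\bar\Omega$ of $\Omega$ via Lemma~\ref{lemma_add_weights}, apply Proposition~\ref{prop_techncial} to $\bar\Omega$, and then use Lemma~\ref{lemma_add_weights} a second time to convert the resulting unweighted sum over $\bar\Omega$ into the desired Voronoi-weighted sum over $\Omega$. The test bump used in Proposition~\ref{prop_techncial} will be a function $\func \in \cL^1(D)$ with $\int\func=1$ and $|\hat\func(\xi)| \lesssim e^{-c|\xi|^\alpha}$, constructed as in the proof of Proposition~\ref{prop_residual} via \cite[Theorem~1.3.5]{ho83-1}. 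Since $(1+\varepsilon)\delta_{D^\circ}(\Omega) < 1/4$ leaves slack, I choose $\varepsilon'\in(0,\varepsilon)$ and $\rho,\eta>0$ with $\rho<\eta/2$ -- all depending only on $\varepsilon$ -- so that
\[
(1+\varepsilon')\bigl(\delta_{D^\circ}(\Omega)+\eta+\rho\bigr) < \tfrac{1}{4},
\]
which ensures $\cos\bigl(2\pi(1+\varepsilon')(\delta_{D^\circ}(\Omega)+\eta+\rho)\bigr)^2 \geq c_\varepsilon > 0$ and that any set with gap below $\delta_{D^\circ}(\Omega)+\eta+\rho$ satisfies the hypothesis of Proposition~\ref{prop_techncial} with $\varepsilon'$.

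\textbf{Extraction and inner bound.} Since $f\in\cL^2(D)$ with $D$ compact, $\hat f$ is continuous, so Lemma~\ref{lemma_add_weights} applied to $\hat f$ produces $\bar\Omega=\bar\Omega(\hat f)$ that is $(\eta-2\rho)$-separated with respect to $|\cdot|_{D^\circ}$ and has $\delta_{D^\circ}(\bar\Omega)\leq \delta_{D^\circ}(\Omega)+\eta+\rho$. As norms on $\bbR^d$ are equivalent, $\bar\Omega$ is also Euclidean-separated, so its covering number is bounded by a constant depending only on $D,\varepsilon,d$. Fix $c_D>0$ with $D^\circ\subseteq c_D B_1$. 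For $R$ larger than a threshold $R_0$ depending only on $\rho,c_D$, set $R':=R-2\rho c_D>0$ and apply Proposition~\ref{prop_techncial} to $\bar\Omega$, $\func$, $Y$ and radius $R'$ to obtain
\[
c_\varepsilon \int_Y |\hat f(\xi)|^2 \, d\xi \leq C_1 \sum_{\omega\in\bar\Omega\cap(Y+B_{R'})} |\hat f(\omega)|^2 + C_2 \norm{f}^2 \int_{|\xi|>\varepsilon' R'} |\hat\func(\xi)|^2 \, d\xi.
\]
For $R\leq R_0$, \eqref{eq_weigt_sum_by_int} follows directly from the Bessel estimate of Lemma~\ref{lemma_bessel}, since $e^{-cR^\alpha}\geq e^{-cR_0^\alpha}$ can be absorbed into $C$.

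\textbf{Re-weighting and tail.} Applying Lemma~\ref{lemma_add_weights} a second time -- with $Y+B_{R'}$ in place of $Y$ -- produces
\[
\sum_{\omega\in\bar\Omega\cap(Y+B_{R'})} |\hat f(\omega)|^2 \leq \textnormal{meas}\bigl(\tfrac{\rho}{2}D^\circ\bigr)^{-1} \sum_{\omega\in\Omega\cap((Y+B_{R'})+2\rho D^\circ)} \mu_\omega |\hat f(\omega)|^2,
\]
and the inclusion $(Y+B_{R'})+2\rho D^\circ \subseteq Y+B_R$ holds by choice of $R'$ and $c_D$. The tail is estimated by the stretched-exponential decay of $\hat\func$, giving $\int_{|\xi|>\varepsilon' R'}|\hat\func(\xi)|^2\,d\xi \lesssim e^{-c_3 R^\alpha}$ with constants depending only on $D,\alpha,\varepsilon$. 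Assembling these bounds and dividing by $c_\varepsilon$ yields \eqref{eq_weigt_sum_by_int}.

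\textbf{Main obstacle.} The delicate feature is that the separated subset $\bar\Omega$ furnished by Lemma~\ref{lemma_add_weights} depends on $\hat f$ -- at each cell the lemma selects a point minimizing $|\hat f|$. This would be a serious issue if we required a single $\bar\Omega$ to serve all $f$ simultaneously, but here both invocations of the lemma use the same $\bar\Omega=\bar\Omega(\hat f)$ and both inequalities point in the needed direction: the sum over $\bar\Omega$ is dominated by (and does not dominate) the Voronoi-weighted sum over $\Omega$. What remains is routine bookkeeping to check that $\varepsilon',\rho,\eta,c_\varepsilon$, the covering number of $\bar\Omega$, and all other constants depend only on $D,\alpha,\varepsilon$.
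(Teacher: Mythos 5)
Your proposal is correct and follows essentially the same route as the paper's proof: extract an $f$-dependent separated subset $\bar\Omega$ via Lemma~\ref{lemma_add_weights}, apply Proposition~\ref{prop_techncial} to $\bar\Omega$, reweight back to $\Omega$ using~\eqref{eq_lemma_add_weights}, adjust the radius, and absorb constants. Two minor remarks: for the small-$R$ case, the trivial bound $\int_Y|\hat f|^2 \leq \norm{f}^2$ needed is Parseval, not the Bessel bound of Lemma~\ref{lemma_bessel} (which controls sums, not integrals); and your choice of $\eta,\rho,\varepsilon'$ depending only on $\varepsilon$ (rather than on $\delta_{D^\circ}(\Omega)$, as in the paper's $\eta=\tfrac{\varepsilon}{2}\delta_{D^\circ}(\Omega)$, $\rho=\eta/4$) is a cleaner way to guarantee that the covering number of $\bar\Omega$, and hence the final constants, depend only on $D,\alpha,\varepsilon$ as claimed.
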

\begin{proof}
Let $\eta := \tfrac{\varepsilon}{2} \delta_{D^\circ}(\Omega)$, $\rho := \tfrac{\eta}{4}$,
and $\varepsilon'$ be defined by $(1+\varepsilon')(1+\tfrac{\varepsilon}{2}+\tfrac{\varepsilon}{8})=(1+\varepsilon)$.
Then $\rho < \tfrac{\eta}{2}$, $\varepsilon'>0$,
 and $(1+\varepsilon')\left(\delta_{D^\circ}(\Omega)+\eta+\rho\right)<1/4$.

Let $\func$ be a function such that $\supp(\func) \subseteq D$,
$\int \func=1$ and $\widehat{\func}(\xi) \lesssim  e^{-c' \abs{\xi}^\alpha}$ for some constant $c'>0$.
Let $f \in \cL^2(D)$. By Lemma \ref{lemma_add_weights}, there exists a set $\Omega'=\Omega'(f)$ that is $\eta-2\rho$ 
separated, has gap
\begin{align*}
\delta_{D^\circ}(\Omega') \leq \left(\delta_{D^\circ}(\Omega)+\eta+\rho\right) \leq \frac{1}{4(1+\varepsilon')}
\end{align*}
and satisfies \eqref{eq_lemma_add_weights}.

By Proposition \ref{prop_techncial} and the decay of $\widehat{\func}$ we conclude that
there exist positive constants $A, c, C$, that only depend on $\varepsilon>0$ and $D$ such that
\begin{align*}
A \int_Y \left|\hat{f}(\xi)\right|^2 d\xi
\leq C \sum_{\omega \in \Omega' \cap (Y+B_R)} \left|\hat{f}(\omega)\right|^2
+ 
C  {n^2_{\Omega'}} \norm{f}^2  e^{-c R^\alpha}.
\end{align*}
Note that since $\Omega'$ is $\eta-2\rho$ separated, $n_{\Omega'} \lesssim (\eta-2\rho)^{-d} < +\infty$.
We use \eqref{eq_lemma_add_weights} to conclude that
\begin{align*}
A \int_Y \left|\hat{f}(\xi)\right|^2 d\xi
\leq C \sum_{\omega \in \Omega' \cap (Y+B_{R+2\rho})} \mu_{\omega} \left|\hat{f}(\omega)\right|^2
+ 
C \norm{f}^2  e^{-c R^\alpha},
\end{align*}
for another constant $C>0$.
Since $\rho < \tfrac{1}{4}$, for $R \geq 1$, we can replace $R \mapsto R-2\rho$,
and absorb the corresponding change in the term $e^{-c R^\alpha}$ into the constants 
$c,C$ and obtain \eqref{eq_weigt_sum_by_int}. For $0 < R \leq 1$, \eqref{eq_weigt_sum_by_int} is 
trivially true.
\end{proof}
\begin{remark}
Note that the proof of Proposition \ref{weigt_sum_by_int} uses a certain sampling set $\Omega'$ that depends on the 
function $f$ being sampled. The explicit estimate in Proportion \ref{prop_techncial} shows how the lower frame bound 
associated to $\Omega'$ depends on the geometry of $\Omega'$, and thus allows us to get an estimate independent of $f$. 
This kind of reasoning would not be available without quantitative information on the lower frame bound.
\end{remark}

\subsection{Converse estimates at critical density}
We now derive a version of the previous estimates for the case of critical sampling density. As expected, the error 
decay in much milder than in the oversampling case.
\begin{proposition}\label{p:converse_est}
The following estimates hold for $f \in \cL^2([-1/2,1/2]^d)$:
\begin{align}
\label{eq_conv_1}
&\int_{B_M} \abs{\hat{f}(\xi)}^2 d\xi
\lesssim
\sum_{k \in \bbZ^d \cap B_{R}}\abs{\hat{f}(k)}^2
+ \frac{M}{R} \norm{f}^2,
\qquad
R, M \geq 0,
\\
\label{eq_conv_2}
&\int_{\hat{\Rst}^d\setminus B_M} \abs{\hat{f}(\xi)}^2 d\xi
\lesssim
\sum_{k \in \bbZ^d \setminus B_{R}}\abs{\hat{f}(k)}^2
+ \frac{R}{M} \norm{f}^2,
\qquad
R, M \geq 0,
\end{align}
where the implied constant depends only on the dimension $d$.
\end{proposition}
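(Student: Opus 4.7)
The plan is to combine the Shannon--Whittaker interpolation formula with an inductive Fubini argument, which I expect to be the cleanest way to preserve the sharp one-dimensional rate in arbitrary dimension. Since $f$ is supported in $[-1/2,1/2]^d$, its Fourier transform $\hat f$ lies in the Paley--Wiener space and admits the reconstruction
$$\hat f(\xi)=\sum_{k\in\bbZ^d}\hat f(k)\,\sinc_d(\xi-k),\qquad \sinc_d(\xi):=\prod_{j=1}^d\sinc(\xi_j),$$
with $\sum_k|\hat f(k)|^2=\norm{f}^2=\int|\hat f|^2$ by Parseval.

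First I would settle both inequalities in dimension one. Split $\hat f=F_{\mathrm{in}}+F_{\mathrm{out}}$, where $F_{\mathrm{in}}(\xi):=\sum_{|k|\leq R}\hat f(k)\sinc(\xi-k)$ and $F_{\mathrm{out}}$ is the tail. Orthonormality of the sincs gives $\int|F_{\mathrm{in}}|^2=\sum_{|k|\leq R}|\hat f(k)|^2$, so for \eqref{eq_conv_1} it suffices to bound $\int_{-M}^M|F_{\mathrm{out}}|^2$. Cauchy--Schwarz inside the Shannon expansion yields $|F_{\mathrm{out}}(\xi)|^2\leq\norm{F_{\mathrm{out}}}^2\,S(\xi)$ with $S(\xi):=\sum_{|k|>R}|\sinc(\xi-k)|^2$, and the pointwise bound $|\sinc(t)|\leq\min(1,(\pi|t|)^{-1})$ produces $\int_{-M}^M S(\xi)\,d\xi\lesssim M/R$ for $R\geq 2M$ (the complementary range is trivial). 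For \eqref{eq_conv_2} I would run the same decomposition with the roles of interior and tail swapped, bounding $\int_{|\xi|>M}|F_{\mathrm{in}}|^2$ via $\sum_{|k|\leq R}\int_{|\xi|>M}|\sinc(\xi-k)|^2\,d\xi\lesssim R/M$.

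To reach general $d$, I would establish the cube analogues first---the same estimates with $B_M$, $B_R$ replaced by $Q_M=[-M,M]^d$ and $Q_R=[-R,R]^d$---by induction on $d$. Write $\xi=(\xi_1,\xi')$ and set $h_{\xi'}(x_1):=\int f(x_1,x')\E^{-\I 2\pi\xi'\cdot x'}\,dx'$, which is supported in $[-1/2,1/2]$ and whose 1D Fourier transform equals $\hat f(\xi_1,\xi')$. Applying the 1D estimate in $\xi_1$ pointwise in $\xi'$ and integrating over $\xi'\in Q_M^{d-1}$, the error term collapses to $(M/R)\norm{f}^2$ via $\int\norm{h_{\xi'}}^2 d\xi'=\norm{f}^2$ (Plancherel in $x'$), while the partial-sum contribution becomes $\sum_{|k_1|\leq R}\int_{Q_M^{d-1}}|\widehat{f_{k_1}}(\xi')|^2\,d\xi'$ with $f_{k_1}(x'):=\int f(x_1,x')\E^{-\I 2\pi k_1 x_1}dx_1\in\cL^2([-1/2,1/2]^{d-1})$; the $(d-1)$-dimensional induction hypothesis applied to each $f_{k_1}$, combined with $\sum_{|k_1|\leq R}\norm{f_{k_1}}^2\leq\norm{f}^2$ (Parseval in $x_1$), closes the induction. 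A symmetric argument based on $\bbR^d\setminus Q_M^d\subseteq\bigcup_j\{|\xi_j|>M\}$ gives \eqref{eq_conv_2} for cubes. Finally the inclusions $B_M\subseteq Q_M$ and $Q_{R/\sqrt d}\subseteq B_R$ (and the dual ones $Q_{M/\sqrt d}\subseteq B_M$, $B_R\subseteq Q_R$ for the second estimate) convert the cube bounds into the stated ball bounds, at the cost of harmless $\sqrt d$ factors absorbed into the dimensional constant.

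The main obstacle is precisely this dimensional scaling. A naive tensor-product Cauchy--Schwarz applied directly to $\int_{B_M}\sum_{k\notin B_R}|\sinc_d(\xi-k)|^2\,d\xi$ only yields errors of order $M^d/R$ (and $R^d/M$ in the dual case), which would weaken Theorem \ref{th_2} in higher dimensions. The Fubini-based induction circumvents this by handling only one coordinate at a time and collapsing the residual error in the remaining variables via Plancherel, preserving the one-dimensional rate in every dimension.
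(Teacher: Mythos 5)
Your proposal is correct and follows essentially the same route as the paper: a one-dimensional truncation estimate for the Shannon--Whittaker (sinc) expansion proved via Cauchy--Schwarz and the decay $\abs{\sinc(t)}\lesssim \abs{t}^{-1}$ (split into the regimes $R\geq 2M$ and $M\geq 2R$), followed by a coordinate-by-coordinate Fubini/Plancherel induction in the infinity-norm and a final passage between cubes and Euclidean balls. The only cosmetic difference is that the paper phrases the one-dimensional step as an operator-norm bound for $S_M-S^R_M$ (resp.\ $T_M-T^R_M$) rather than as a pointwise Cauchy--Schwarz on the tail of the series, but the underlying computation is identical.
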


\begin{proof}
{\bf Step 1.} \emph{We prove \eqref{eq_conv_1} in dimension $d=1$.}
Let $\sinc(x) = \frac{\sin(\pi x)}{\pi x}$. Hence, $\widehat{\sinc}=\chi_{[-1/2,1/2]}$.
Note first that the estimate is trivial if $R \leq 2M$. Assume that
$R \geq 2M$ and let us consider the operators $S_M, S^R_M: \ell^2(\Zst) \to \cL^2(\hRdst)$,
\begin{align*}
(S_M a)(\xi) &:= \sum_{k \in \Zst} a_k \cdot \sinc(\xi-k) \cdot \chi_{ B_{M}}(\xi),
\\
(S^R_M a)(\xi) &:= \sum_{k \in \Zst \cap B_{R}} a_k \cdot \sinc(\xi-k) \cdot \chi_{ B_{M}}(\xi).
\end{align*}
For $f \in \cL^2([-1/2,1/2])$, we let $a_k := \hat{f}(k)$ and estimate
\begin{align*}
&\left(\int_{B_M} \abs{\hat{f}(\xi)}^2 d\xi\right)^{1/2}
= \norm{S_M a}
\leq \norm{S^R_M a} + \norm{(S_M-S^R_M)a}
\\
&\qquad=
\Norm{\sum_{k \in \Zst \cap B_{R}} \hat{f}(k) \sinc(\cdot-k)}_{\cL^2(B_{M})} + \norm{(S_M-S^R_M)a}
\\
&\qquad
\leq \Norm{\sum_{k \in \Zst \cap B_{R}} \hat{f}(k) \sinc(\cdot-k)}_{\cL^2(\hRst)} +
\norm{S_M-S^R_M}_{\ell^2\to \cL^2} \norm{a}
\\
&\qquad = 
\left(\sum_{k \in \Zst \cap B_{R}} \abs{\hat{f}(k)}^2  \right)^{1/2} + 
\norm{S_M-S^R_M}_{\ell^2\to \cL^2} \norm{f}.
\end{align*}
Therefore,
\begin{align*}
\int_{B_M} \abs{\hat{f}(\xi)}^2 d\xi
\lesssim 
\sum_{k \in \Zst \cap B_{R}} \abs{\hat{f}(k)}^2+ \norm{S_M-S^R_M}^2_{\ell^2\to \cL^2}\norm{f}^2,
\end{align*}
and it suffices to show that $\norm{S_M-S^R_M}_{\ell^2\to \cL^2} \lesssim \sqrt{\frac{M}{R}}$. To this end, note
that for $\xi \in B_{M}$ and $k \notin \Zst \cap B_{R}$,
$\abs{\xi} \leq M \leq R/2 \leq \tfrac{1}{2} \abs{k}$. As a consequence,
\begin{align*}
\abs{\xi-k} \geq \abs{k} - \abs{\xi} \geq \tfrac{1}{2} \abs{k},
\end{align*}
and therefore,
\begin{align*}
\abs{\sinc(\xi-k)} \lesssim \frac{1}{k}.
\end{align*}
Hence, for $a \in \ell^2(\Zst)$ we bound
\begin{align*}
&\abs{(S_M-S^R_M)a(\xi)} = 
\abs{\sum_{k \in \Zst, \abs{k} > R} a_k \cdot \sinc(\xi-k) \cdot \chi_{ B_{M}}(\xi)}
\\
&\qquad \lesssim
\chi_{ B_{M}}(\xi) 
\sum_{k \in \Zst: \abs{k} > R} \frac{\abs{a_k}}{\abs{k}}.
\end{align*}
By Cauchy--Schwarz,
\begin{align*}
|(S_M - S^R_M)a(\xi) | &\leq 
\chi_{B_M}(\xi) \| a \|_2 \left(\sum_{k \in \Zst: \abs{k} > R} \abs{k}^{-2} \right)^{1/2}
\\
&\lesssim \chi_{B_M}(\xi) \| a \|_2  R^{-1/2}.
\end{align*}
Hence
\begin{align*}
\| (S_M - S^R_M)a \|_2 & \lesssim \| a \|_2 \sqrt{M/R},
\end{align*}
as required.

{\bf Step 2.} \emph{We extend \eqref{eq_conv_1} to $d > 1$.}
We proceed by induction, with the notation $\xi=(\xi_1,\xi_*) \in 
\hat{\Rst}\times\hat{\Rst}^{d-1}$.
Without loss of generality we use the infinity-norm - which we still denote by $\abs{\cdot}$
to keep the notation simple. Applying the result in dimension $d-1$ 
to the function obtained by taking a partial Fourier transform of $f$ in the first variable, we obtain
\begin{align*}
\int_{\abs{\xi_*} \leq M} \abs{\hat{f}(\xi_1,\xi_*)}^2 d\xi_*
\lesssim \sum_{k_* \in \bbZ^{d-1}, \abs{k_*} \leq R} 
\abs{\hat{f}(\xi_1,k_*)}^2
+ \frac{CM}{R} \int_{\hat{\Rst}^{d-1}} \abs{\hat{f}(\xi_1,\xi_*)}^2 d\xi_*,
\qquad \xi_1 \in \hat{\Rst}.
\end{align*}
We now integrate on $\xi_1$ and apply the one-dimensional version of the result:
\begin{align*}
&\int_{\abs{\xi} \leq M }\abs{\hat{f}(\xi)}^2 d\xi
=
\int_{\abs{\xi_1} \leq M} 
\int_{\abs{\xi_*} \leq M} \abs{\hat{f}(\xi_1,\xi_*)}^2 d\xi_* d\xi_1
\\
&\qquad \lesssim \sum_{k_* \in \bbZ^{d-1}, \abs{k_*} \leq R} 
\int_{\abs{\xi_1} \leq M} \abs{\hat{f}(\xi_1,k_*)}^2 d\xi_1
+ \frac{M}{R} \norm{f}^2
\\
&\qquad
\lesssim \sum_{k_* \in \bbZ^{d-1}, \abs{k_*} \leq R}
\left( 
\sum_{k_1 \in \bbZ, \abs{k_1} \leq R} \abs{\hat{f}(k_1,k_*)}^2
+
\frac{M}{R} \int_{\hat{\Rst}} \abs{\hat{f}(\xi_1,k_*)}^2 d\xi_1
\right)
+ \frac{M}{R} \norm{f}^2
\\
&\qquad = \sum_{k \in \bbZ^d, \abs{k} \leq R} \abs{\hat{f}(k_1,k_*)}^2
+
\frac{M}{R} \int_{\hat{\Rst}} \sum_{k_* \in \bbZ^{d-1}, \abs{k_*} \leq R} 
\abs{\hat{f}(\xi_1,k_*)}^2 d\xi_1
+ \frac{M}{R} \norm{f}^2
\\
&\qquad \lesssim \sum_{k \in \bbZ^d, \abs{k} \leq R} \abs{\hat{f}(k)}^2
+ \frac{M}{R} \norm{f}^2.
\end{align*}
This completes the proof.

{\bf Step 3.} \emph{We prove \eqref{eq_conv_2} in dimension $d=1$}. As before, we assume without loss of generality
that $M \geq 2R$ and consider the operators $T_M, T^R_M: \ell^2(\Zst) \to \cL^2(\hRdst)$,
\begin{align*}
(T_M a)(\xi) &:= \sum_{k \in \Zst} a_k \cdot \sinc(\xi-k) \cdot \chi_{\hat{\Rst}\setminus B_{M}}(\xi),
\\
(T^R_M a)(\xi) &:= \sum_{k \in \Zst, \abs{k}>R} a_k \cdot \sinc(\xi-k)
\cdot \chi_{\hat{\Rst}\setminus B_{M}}(\xi).
\end{align*}
With $a_k := \hat{f}(k)$ we estimate
\begin{align*}
&\left(\int_{\abs{\xi}>M} \abs{\hat{f}(\xi)}^2 d\xi\right)^{1/2}
= \norm{T_M a} 
\leq \norm{T^R_M a} + \norm{T_M - T^R_M} \norm{f}
\\
&\leq \left(\sum_{k \in \Zst, \abs{k}>R} \abs{\hat{f}(k)}^2\right)^{1/2} \norm{T_M - T^R_M} \norm{f},
\end{align*}
so it suffices to show that $\norm{T_M - T^R_M} \lesssim \sqrt{R/M}$. To this end, note that
if $\xi \notin B_{M}$ and $\abs{k} \leq R$, then $\abs{\xi} \geq M \geq 2R \geq 2 \abs{k}$, so
$\abs{\xi-k} \geq \abs{\xi} - \abs{k} \geq \tfrac{1}{2} \abs{\xi}$. Consequently,
\begin{align*}
&\abs{(T_M - T^R_M)a(\xi)} \leq 
\sum_{k \in \Zst, \abs{k} \leq R} \abs{a_k} \cdot \abs{\sinc(\xi-k)}
\cdot \chi_{\hat{\Rst}\setminus B_{M}}(\xi)
\\
&\qquad
\lesssim
\sum_{k \in \Zst, \abs{k} \leq R} \abs{a_k} \abs{\xi}^{-1} \cdot \chi_{\hat{\Rst}\setminus B_{M}}(\xi)
\lesssim
\norm{a} \sqrt{R} \abs{\xi}^{-1} \cdot \chi_{\hat{\Rst}\setminus B_{M}}(\xi),
\end{align*}
and
\begin{align*}
\norm{(T_M - T^R_M)a}^2_2 \lesssim \norm{a}^2_2 R \int_{\abs{\xi}>M} \abs{\xi}^{-2} d\xi
\lesssim \norm{a}^2_2 \frac{R}{M},
\end{align*}
as desired.

{\bf Step 4.} \emph{We extend \eqref{eq_conv_2} to $d > 1$.} We use again the notation $\xi=(\xi_1,\xi_*) \in 
\hat{\Rst}\times\hat{\Rst}^{d-1}$ and the infinity-norm. Applying the result in dimension $1$ 
to the function obtained by taking a partial Fourier transform of $f$ in the last variables, we obtain
\begin{align*}
\int_{\abs{\xi_1} > M} \abs{\hat{f}(\xi_1,\xi_*)}^2 d\xi_1
\lesssim
\sum_{k_1 \in \Zst, \abs{k_1}>R}
\abs{\hat{f}(k_1,\xi_*)}^2 +  \frac{R}{M} \int_{\hat{\Rst}} \abs{\hat{f}(\xi_1,\xi_*)}^2 d\xi_1.
\end{align*}
We integrate on $\xi_*$ and use Parseval's identity to conclude that
\begin{align*}
&\int_{\abs{\xi_1} > M} \abs{\hat{f}(\xi)}^2 d\xi
\lesssim
\sum_{k_1 \in \Zst, \abs{k_1}>R}
\int_{\hat{\Rst}^{d-1}}
\abs{\hat{f}(k_1,\xi_*)}^2 d\xi_* +  \frac{R}{M} \norm{f}^2
\\
&\qquad 
= \sum_{k \in \Zst, \abs{k_1}>R}
\abs{\hat{f}(k)}^2 +  \frac{R}{M} \norm{f}^2
\\
&\qquad \leq
\sum_{k \in \Zst, \abs{k}>R}
\abs{\hat{f}(k)}^2 +  \frac{R}{M} \norm{f}^2.
\end{align*}
Finally, using a similar estimate with the other coordinates $\xi_k$ instead of $\xi_1$ we conclude that
\begin{align*}
&\int_{\abs{\xi} > M} \abs{\hat{f}(\xi)}^2 d\xi
\leq \sum_{k=1}^d  
\int_{\abs{\xi_k} > M} \abs{\hat{f}(\xi)}^2 d\xi
\lesssim
\sum_{k \in \Zst, \abs{k}>R}
\abs{\hat{f}(k)}^2 +  \frac{R}{M} \norm{f}^2.
\end{align*}
\end{proof}

\subsection{Explicit estimates for lower frame bounds under sharp density conditions}\label{ss:explicit}

Using Proposition \ref{prop_techncial} we now derive the following results.

\begin{corollary}[Lower frame bound for unweighted exponentials]
\label{coro_expl}
Let $D \subseteq \bbR^d$ be a centered symmetric convex body and let
$\Omega \subseteq \hat{\bbR}^d$ be a closed countable set such that $\delta_{D^\circ}(\Omega)<1/4$. For an 
$\varepsilon>0$ such that 
$(1+\varepsilon) \delta_{D^\circ}(\Omega)<1/4$, we have
\begin{align*}
{\mes{D} \varepsilon^{d} \cos\left(2\pi(1+\varepsilon) \delta_{D^\circ}\right)^2}
\norm{f}^2 \leq \sum_{\omega\in\Omega}\left|\hat f(\omega)\right|^2,
\qquad f \in\cL^2(D).
\end{align*}
\end{corollary}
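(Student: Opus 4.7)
The plan is to obtain Corollary \ref{coro_expl} as a direct specialization of Proposition \ref{prop_techncial} with the trivial window $Y=\hat{\bbR}^d$, sending the auxiliary parameter $R\to\infty$. The hypothesis $(1+\varepsilon)\delta_{D^\circ}(\Omega)<1/4$ is the same as $4\delta_{D^\circ}(\Omega)<1/(1+\varepsilon)$, so the density condition of that proposition is met. Moreover, Remark \ref{rem_non_rel} allows us to dispense with the relative separation assumption precisely in this regime, which is what lets the result apply to every closed countable $\Omega$ satisfying only the gap hypothesis.

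I would then choose the window $\func$ so as to optimize the constant that appears on the right-hand side of Proposition \ref{prop_techncial}. Since $\func$ must satisfy $\supp(\func)\subseteq D$ and $\int\func=1$, Cauchy--Schwarz gives $\|\func\|^2\geq 1/\mes{D}$, with equality for the constant density
\begin{equation*}
\func:=\frac{1}{\mes{D}}\chi_D.
\end{equation*}
This is the sharpest admissible choice and is the reason why the factor $\mes{D}$ appears in the final bound.

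With $Y=\hat{\bbR}^d$ we have $Y+B_R=\hat{\bbR}^d$ for every $R\geq 0$, so the sum on the right of Proposition \ref{prop_techncial} is exactly $\sum_{\omega\in\Omega}|\hat{f}(\omega)|^2$ regardless of $R$. Since $\func\in\cL^2$, Plancherel gives $\hat{\func}\in\cL^2$, so the tail term $C_D n_\Omega^2\|f\|^2\int_{|\xi|>\varepsilon R}|\hat{\func}(\xi)|^2\D\xi$ vanishes as $R\to\infty$ (and by Remark \ref{rem_non_rel} we need not worry about $n_\Omega$ being finite in the first place). Applying Plancherel on the left to rewrite $\int_{\hat{\bbR}^d}|\hat{f}|^2=\|f\|^2$ and dividing by $\|\func\|^2=1/\mes{D}$ yields
\begin{equation*}
\mes{D}\,\varepsilon^d\cos\bigl(2\pi(1+\varepsilon)\delta_{D^\circ}\bigr)^2\,\|f\|^2\leq\sum_{\omega\in\Omega}|\hat{f}(\omega)|^2,
\end{equation*}
which is the claim.

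There is no real obstacle here; the corollary is essentially a bookkeeping exercise on Proposition \ref{prop_techncial}. The only two ingredients worth flagging are (i) the observation that the relatively-separated hypothesis is unnecessary when $Y=\hat{\bbR}^d$ (covered by Remark \ref{rem_non_rel}, and visible from the proof of the proposition because $\Omega\setminus Y_R=\emptyset$ kills the term that required $n_\Omega<\infty$), and (ii) the sharp choice of $\func$, which sets the constant at its optimal value $\mes{D}$.
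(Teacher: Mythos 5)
Your proof is correct and matches the paper's own argument exactly: both apply Proposition \ref{prop_techncial} with $Y=\hat{\bbR}^d$ and $\func=\mes{D}^{-1}\chi_D$, invoke Remark \ref{rem_non_rel} to discard the relative-separation hypothesis, and let $R\to\infty$ to kill the tail term. The extra Cauchy--Schwarz observation that $\mes{D}^{-1}\chi_D$ minimizes $\|\func\|^2$ among admissible windows is a nice addition that the paper leaves implicit, but it is the same route.
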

\begin{proof}
We apply Proposition \ref{prop_techncial} with $Y=\hat{\bbR}^d$,
$\func := \mes{D}^{-1} \chi_{D}$ and let $R \rightarrow  +\infty$. Note that in this case we do not need $\Omega$ to be 
relatively separated (cf. Remark \ref{rem_non_rel}).
\end{proof}

\begin{corollary}[Lower frame bound for weighted exponentials]
\label{coro_expl_2}
Let $D \subseteq \bbR^d$ be a centered symmetric convex body and let
$\Omega \subseteq \hat{\bbR}^d$ be a closed countable set such that $\delta_{D^\circ}=\delta_{D^\circ}(\Omega)<1/4$. 
For 
$\varepsilon,\eta>0$ such that $(1+\varepsilon)\left(\delta_{D^\circ}+\eta\right)<1/4$, we have
\begin{align*}
\mes{D^\circ} \mes{D} \left(\frac{\eta\varepsilon}{6}\right)^d 
\cos\left(2\pi(1+\varepsilon)\left(\delta_{D^\circ}+\eta\right)\right)^2 \norm{f}^2\leq\sum_{\omega\in\Omega} \mu_\omega\left|\hat 
f(\omega)\right|^2,
\qquad f \in\cL^2(D).
\end{align*}
\end{corollary}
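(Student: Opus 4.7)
My plan is to derive the weighted lower bound from the unweighted Corollary~\ref{coro_expl} by extracting a well-chosen relatively separated subset of $\Omega$ via Lemma~\ref{lemma_add_weights}. The idea is to split the slack parameter $\eta$ between the separation parameter of the lemma and the radius $\rho$ appearing in the volume factor $\mes{(\rho/2) D^\circ}$, so as to maximize $(\rho/2)^d$ while keeping the gap of the extracted subset below $\delta_{D^\circ}(\Omega)+\eta$. A short parameter optimization suggests that the choice $\rho \approx \eta/3$, paired with a separation parameter $\approx 2\eta/3$, is optimal and produces the exponent $(\eta/6)^d$ appearing in the conclusion.

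Concretely, fix a small auxiliary $\delta>0$, set $\tilde\eta := 2\eta/3 + \delta$ and $\rho := \eta/3$, so that $\rho < \tilde\eta/2$. Since $\hat f$ is continuous (as $f\in \cL^2(D)$ with $D$ compact), Lemma~\ref{lemma_add_weights} applied to $\hat f$ with $Y = \hat{\bbR}^d$ in \eqref{eq_lemma_add_weights} yields a set $\bar\Omega = \bar\Omega(\hat f)$ with
\[
\delta_{D^\circ}(\bar\Omega) \leq \delta_{D^\circ}(\Omega) + \tilde\eta + \rho = \delta_{D^\circ} + \eta + \delta,
\]
and
\[
\sum_{\omega \in \Omega} \mu_\omega |\hat f(\omega)|^2 \;\geq\; \mes{(\rho/2)D^\circ}\sum_{\omega \in \bar\Omega} |\hat f(\omega)|^2 \;=\; (\eta/6)^d \mes{D^\circ} \sum_{\omega \in \bar\Omega} |\hat f(\omega)|^2.
\]

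Next, choose $\delta$ small enough that $(1+\varepsilon)(\delta_{D^\circ}+\eta+\delta)<1/4$, which is possible by the strict hypothesis of the corollary. Then Corollary~\ref{coro_expl} applies to $\bar\Omega$ and, using monotonicity of $\cos$ on $[0,\pi/2)$ together with $\delta_{D^\circ}(\bar\Omega)\le \delta_{D^\circ}+\eta+\delta$, gives
\[
\sum_{\omega \in \bar\Omega} |\hat f(\omega)|^2 \;\geq\; \mes{D}\,\varepsilon^d \cos\bigl(2\pi(1+\varepsilon)(\delta_{D^\circ}+\eta+\delta)\bigr)^2 \|f\|^2.
\]
Chaining the two displays and letting $\delta \to 0^+$ on the right-hand side (the left-hand side being independent of $\delta$) yields the claimed bound with constant $\mes{D^\circ}\mes{D}(\eta\varepsilon/6)^d \cos(2\pi(1+\varepsilon)(\delta_{D^\circ}+\eta))^2$, by continuity of $\cos$. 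I expect no substantive obstacle here: the argument is a direct concatenation of Lemma~\ref{lemma_add_weights} and Corollary~\ref{coro_expl}, with the only delicate point being the parameter split that extracts the explicit prefactor $(\eta/6)^d$.
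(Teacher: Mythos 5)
Your proposal is correct and follows essentially the same route as the paper: apply Lemma~\ref{lemma_add_weights} with $Y=\hat{\bbR}^d$ and the split $2\eta/3$ versus $\eta/3$ to extract a separated subset $\bar\Omega$ with gap below $\delta_{D^\circ}+\eta$, then invoke Corollary~\ref{coro_expl} on $\bar\Omega$ and chain the two bounds. The only (cosmetic) difference is the limiting procedure — the paper fixes $\eta'=2\eta/3$ and lets $\rho\to\eta'/2$ to recover the factor $(\eta/6)^d$, whereas you fix $\rho=\eta/3$ and perturb the separation parameter by $\delta\to 0^+$ to recover the cosine factor — which yields the identical constant.
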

\begin{proof}
We use Lemma \ref{lemma_add_weights} with $Y=\hat{\bbR}^d$, $\eta' := \frac{2}{3}\eta$ and $0<\rho<\eta'/2$ to obtain a set $\bar{\Omega}$ satisfying
\eqref{eq_lemma_add_weights}. Since $\delta_{D^\circ}(\bar\Omega)
\leq \delta_{D^\circ}(\Omega) + \eta' + \rho < \delta_{D^\circ}(\Omega) + \eta$,
we can apply Corollary \ref{coro_expl} to $\bar\Omega$. 
We combine the conclusion of Corollary \ref{coro_expl} with 
\eqref{eq_lemma_add_weights} and let $\rho \rightarrow \eta'/2$ to obtain the desired estimate.
\end{proof}

\section{The remaining proofs}
\label{sec_gran_finale}
We finally prove the results announced in the Introduction.
\begin{proof}[Proof of Theorem \ref{th_samp}]
The theorem follows immediately from Corollary \ref{coro_expl_2} by making the explicit choice:
$\varepsilon := \left((4\delta_{D^{\circ}})^{-1/2} - 1\right)  ( 1- \tfrac{1}{d+2})$
and $\eta :=\varepsilon \delta_{D^{\circ}}$. Note that this choice is admissible since
$(1+\varepsilon)\left(\delta_{D^\circ}+\eta\right)=
(1+\varepsilon)^2 \delta_{D^\circ} < 1/4$.
\end{proof}
\begin{remark}
The values of $\epsilon$ and $\eta$ have been chosen to asymptotically optimize the lower bound in
Corollary \ref{coro_expl_2} in the two regimes $\delta_{D^\circ}(\Omega) \longrightarrow 1/4$
and $d \longrightarrow +\infty$.

\end{remark}
\begin{proof}[Proofs of Theorems \ref{th_1} and \ref{th_2}]
The estimates in \eqref{eq_V1} and \eqref{eq_Vstar1} follow from Proposition \ref{prop_residual}
by taking $Y=B_K$ and $Y=\Rdst \setminus B_K$ respectively. Similarly, the estimates in \eqref{eq_V2} and 
\eqref{eq_Vstar2} follow from Proposition \ref{weigt_sum_by_int} with $Y=B_K$ and $Y=\Rdst \setminus B_K$. Finally, 
Theorem \ref{th_2} follows immediately from
Proposition \ref{p:converse_est}.
\end{proof}

\begin{proof}[Proof of Corollary \ref{coro_univ_rate}]
Let $\varepsilon>0$ and let $A=A_\varepsilon$ be the bound in \eqref{eq_frame_bound} associated
with any set $\Omega$ with gap $\delta_{D^\circ}(\Omega) \leq (1-\varepsilon)1/4$. 
Theorem \ref{th_1}
implies that $V_*(\cR_N,\Omega, 2c N^\alpha) \lesssim V_*(\cR_N,c N^\alpha) + C e^{-c' \sqrt{c N^{\alpha}}}$, 
for some constant $c'>0$. Hence, we can 
choose $c>0$ such that $V_*(\cR_N,\Omega, 2c N^\alpha) \leq \tfrac{1}{2}A$. 
For $f \in \cR_N$ with $\norm{f}=1$, we 
simply estimate
\begin{align*}
\sum_{w \in \Omega \cap B_{2cN^\alpha}} \mu_{\omega} \abs{\hat{f}(\omega)}^2
\geq 
\sum_{w \in \Omega} \mu_{\omega} \abs{\hat{f}(\omega)}^2 - V_*(\cR,\Omega, 2c N^\alpha)
\geq \tfrac{1}{2}A,
\end{align*}
and the conclusion follows.
\end{proof}

\begin{proof}[Proof of Corollary \ref{coro_transfer}]
We only prove part (b); part (a) can be proved similarly.
Let $\theta>0$. By Theorem \ref{th_2}, there exists a constant $C>0$ such that $V_*(\cR_N, K) \leq C V_*(\cR_N, 
\mathbb{Z}^d, M) + C \frac{M}{K}$. By hypothesis there exists $c'_\theta>0$ such that $\sup_N V_*(\cR_N, \mathbb{Z}^d, 
c'_\theta N^\alpha) \leq \tfrac{\theta}{2C}$. For $c_\theta >0$, we conclude that
\begin{align*}
V_*(\cR_N, c_\theta N^\alpha) \leq
C V_*(\cR_N, \mathbb{Z}^d, c'_\theta N^\alpha) + C \frac{c'_\theta}{c_\theta}
\leq \frac{\theta}{2} + C \frac{c'_\theta}{c_\theta}.
\end{align*}
Hence, it suffices to let $c_\theta =  
\frac{2 C c'_\theta}{\theta}$.
\end{proof}

\begin{proof}[Proof of Corollary \ref{coro_barrier}]
We treat the case $D=[-\tfrac{1}{2},\tfrac{1}{2}]^d$ and $\Omega_0=\Zst^d$; the other case is analogous.
Let $c,\gamma,A>0$ be such that $V(\cR_N, \Omega, c N^\gamma) \geq A$. We want to show that $\gamma \geq \alpha$.
Suppose on the contrary that $\gamma < \alpha$ and let $\beta \in (\gamma,\alpha)$. By Theorem \ref{th_1}, it follows that
\begin{align*}
A \leq V(\cR_N, \Omega, K_N) \lesssim V(\cR_N, K_N + M) + e^{-c'\sqrt{M}},
\end{align*}
for some constant $c'>0$. Hence, choosing $M \gg 1$, we conclude that
\begin{align*}
A \lesssim V(\cR_N, K_N + M).
\end{align*}
Having fixed $M$, we now use Theorem \ref{th_2} to obtain
\begin{align*}
A \lesssim V(\cR_N, K_N + M) \lesssim 
V(\cR_N, \Zst^d, N^\beta) + \frac{K_N+M}{N^\beta}
\leq V(\cR_N, \Zst^d, N^\beta) + O(N^{\gamma-\beta}).
\end{align*}
Hence, for $N \gg 0$, $V(\cR_N, \Zst^d, N^\beta) \gtrsim A$. 
Therefore, if we let $\widetilde{K}_N := c N^\beta$, with $c \gg 1$,
it follows that $\inf_N V(\cR_N, \Zst^d, \widetilde{K}_N) >0$.
Since $\beta<\alpha$, this is contradicts
the assumption that $\alpha$ is a stability barrier for the sampling problem associated with $\mathbb{Z}^d$.
\end{proof}

\small

\end{document}